\documentclass{amsart}

\usepackage{amsmath,xypic,leftidx}
\usepackage{xspace}
\usepackage[psamsfonts]{amssymb}
\usepackage[latin1]{inputenc}
\usepackage{graphicx,color}

\DeclareFontFamily{OT1}{pzc}{}
\DeclareFontShape{OT1}{pzc}{m}{it}{<-> s * [1.100] pzcmi7t}{}
\DeclareMathAlphabet{\mathpzc}{OT1}{pzc}{m}{it}

\usepackage{hyperref}

\theoremstyle{remark}

\newtheorem*{question}{\bf Question}

\newtheorem{property}{\bf Property}

\theoremstyle{plain}

\newtheorem{proposition}{\bf Proposition}[section]
\newtheorem{mproposition}{\bf Proposition}

\newtheorem{mexample}{\bf Example}

\newtheorem{definition}[proposition]{\bf Definition}
\newtheorem*{theorem}{Theorem}
\newtheorem*{maintheorem}{\bf  Main Theorem}
\newtheorem*{maintheoremprecised}{\bf Main Theorem (precise form)}

\newtheorem{lemma}[proposition]{\bf Lemma}
\newtheorem{corollary}[proposition]{\bf Corollary}

\def\eps{{\varepsilon}}

\def\C{{\mathbb C}}
\def\R{{\mathbb R}}

\def\H{{\mathbb H}}

\def\P{{{\mathbb P}}}

\def\N{{\mathbb N}}
\def\D{{\mathbb D}}

\def\F{{\cal F}}
\def\E{{\cal E}}

\def\cal{\mathcal}

\def\and{{\quad \text{and}\quad }}
\def\with{{\quad \text{with} \quad }}
\def\as{{\quad \text{as}\quad}}
\def\whence{{\quad \text{whence}\quad}}
\def\sothat{{\quad \text{so that}\quad}}
\def\whereas{{\quad \text{whereas}\quad}}
\def\where{{\quad \text{where}\quad}}
\def\for{{\quad \text{for}\quad}}
\def\forsome{{\quad \text{for some}\quad}}

\def\ds{\displaystyle}

\def\i{{\rm i}}
\def\e{{\rm e}}

\def\frak{\mathfrak }
\def\o{{\rm o}}
\def\O{{\rm O}}

\def\f{\boldsymbol{\mathsf{f}}}

\def\att{{\rm att}}
\def\rep{{\rm rep}}

\newcommand{\mz}{\boldsymbol{\mathpzc{Z}}}

\newcommand{\mx}{\boldsymbol{\mathpzc{X}}}
\newcommand{\mf}{\boldsymbol{\mathpzc{F}}}
\newcommand{\mR}{\boldsymbol{\mathpzc{R}}}
\newcommand{\mQ}{\boldsymbol{\mathpzc{Q}}}
\newcommand{\mS}{\boldsymbol{\mathpzc{S}}}
\newcommand{\mH}{\boldsymbol{\mathpzc{H}}}
\newcommand{\mD}{\boldsymbol{\mathpzc{D}}}
\newcommand{\mmu}{\boldsymbol{\mathpzc{u}}}
\newcommand{\mv}{\boldsymbol{\mathpzc{v}}}

\newcommand{\xniota}{x_n^\iota}
\newcommand{\xno}{x_n^{o}}
\newcommand{\xo}{x^{o}}
\newcommand{\uniota}{v_n^\iota}
\newcommand{\uno}{v_n^{o}}

\def\id{{\rm id}}

\def\Skew{{P}}

\def\fL{{\frak L}}
\def\L{{\cal L}}
\def\Pa{{  P}^\att_f}
\def\Pr{{  P}^\rep_f}
\def\Pg{B_r}

\def\pr{{\pi_1}}

\def\Chat{{\widehat \C}}
\newcommand{\bpf}{\mathcal{B}_f}
\newcommand{\bpg}{\mathcal{B}_g}


 \newcommand{\cv}{\rightarrow}

\newcommand{\set}[1]{\left\{#1\right\}}

\newcommand{\abs}[1]{\left\vert#1\right\vert}

\newcommand{\rest}[1]{ \arrowvert_{#1}}

\newcommand{\unsur}[1]{\frac{1}{#1}}

\newcommand{\lrpar}[1]{\left(#1\right)}

\date{\today}


\title[A polynomial map with a wandering Fatou component]{A two-dimensional polynomial mapping  with a wandering Fatou component}

\begin{author}[M.~Astorg]{Matthieu Astorg}
\address{ %
 Universit\'e Paul Sabatier\\
Laboratoire Emile Picard \\
 118, route de Narbonne \\
 31062 Toulouse Cedex \\
  France }
  \email{mastorg@math.univ-toulouse.fr }
\end{author}

\begin{author}[X.~Buff]{Xavier Buff}
\address{ %
 Universit\'e Paul Sabatier\\
Laboratoire Emile Picard \\
 118, route de Narbonne \\
 31062 Toulouse Cedex \\
  France }
  \email{xavier.buff$@$math.univ-toulouse.fr}
\end{author}

\begin{author}[R.~Dujardin]{Romain Dujardin}

\address{ %
Laboratoire d'Analyse et de Math\'ematiques Appliqu\'ees\\
Universit\'e de Marne-la-Vall\'ee\\
5, boulevard Descartes\\
Cit\'e Descartes  \\
77454 Champs-sur-Marne\\
France }
\email{romain.dujardin@u-pem.fr}
\end{author}

\begin{author}[H.~Peters]{Han Peters}
\address{ %
KdV Institute for Mathematics \\ University of Amsterdam\\ The Netherlands}
\email{h.peters@uva.nl}
\end{author}

\begin{author}[J.~Raissy]{Jasmin Raissy}

\address{ %
 Universit\'e Paul Sabatier\\
Laboratoire Emile Picard \\
 118, route de Narbonne \\
 31062 Toulouse Cedex \\
  France }
  \email{jraissy@math.univ-toulouse.fr}
\end{author}

\thanks{The work of  Matthieu Astorg, Xavier Buff,  Romain Dujardin and Jasmin Raissy was partially supported by ANR
 project LAMBDA,  ANR-13-BS01-0002. The work of Jasmin Raissy was also partially supported by
 the FIRB2012 grant ``Differential Geometry and Geometric Function Theory'', RBFR12W1AQ 002. }

\begin{document}

\begin{abstract}
We show that there exist polynomial endomorphisms of $\C^2$, possessing  a wandering Fatou component.
 These mappings are polynomial skew-products, and
 can be chosen to  extend   holomorphically of $\P^2(\C)$.  We also find real examples
  with wandering domains in $\mathbb{R}^2$. The proof relies on parabolic implosion techniques,
  and is based on an original idea of M. Lyubich.
\end{abstract}

\maketitle

\section*{Introduction}

Let $\Skew:\C^k\cv \C^k$ be a polynomial mapping. In this article we consider $\Skew$ as a dynamical system, that is, we study
 the behavior of the sequence of iterates $(\Skew^{\circ n})_{n\in \N}$. A case of particular interest is when $\Skew$ extends as a
 holomorphic endomorphism on $\P^k(\C)$. As a matter of expositional simplicity, let us assume for the moment  that this property holds.
  The {\em Fatou set} $\F_\Skew$ is classically defined as the
 largest open subset of $\P^k(\C)$ in which the sequence of iterates is locally equicontinuous (or {\em normal}, according to the usual terminology). Its complement, the {\em Julia set}, is where chaotic dynamics takes place. A {\em Fatou component} is a connected component of $\F_\Skew$.

 \medskip

When the dimension  $k$ equals 1, the Non Wandering Domain Theorem due to   Sullivan \cite{sul}
asserts that every Fatou component is  eventually periodic.
 This result is fundamental for at least two reasons.
 \begin{itemize}
 \item[-]  First, it opens the way to a complete description  of the  dynamics in the Fatou set: the orbit of any point in the Fatou set
 eventually lands in a (super-)attracting basin, a parabolic basin or a rotation domain.
 \item[-] Secondly, it introduced   quasi-conformal mappings as a new tool in this research area, leading to many subsequent developments.
 \end{itemize}

There are many variants and generalizations of Sullivan's Theorem in several areas of
one-dimensional dynamics.
For instance it was shown by   Eremenko and  Lyubich \cite{eremenko lyubich} and   Goldberg and   Keen \cite{gk}
that entire mappings with finitely many singular values have no wandering domains.
On the other hand,    Baker \cite{baker},
prior to Sullivan's result, gave the first example of an entire mapping with a wandering domain. 
Simple explicit entire mappings with  Fatou components  wandering to infinity were 
given in \cite[\S 9]{sul} and \cite[\S II.11]{herman}, while 
more elaborate examples with varied dynamical behaviors were presented in \cite{el1}.
More recently,   Bishop \cite{bishop} constructed an example with a bounded singular set. In
all cases, the orbit of the wandering domain is unbounded.

In the real setting, the question of   (non-)existence of wandering intervals has a long and rich history.   It started 
with Denjoy's theory of linearization of circle diffeomorphisms \cite{denjoy}: a $C^2$-diffeomorphism of the circle with irrational rotation 
number has no wandering intervals (hence it is linearizable), whereas this result breaks down for  $C^1$ diffeomorphisms. More recent results include homeomorphisms with various degrees of regularity and flatness of critical points. 

For interval maps, the non-existence of wandering intervals for unimodal maps with negative Schwarzian was established by 
Guckenheimer \cite{guckenheimer}, and later extended to 
several classes of interval and circle
 maps in \cite{lyubich, blokh lyubich, martens melo strien}. In particular,
the result of   Martens,     de Melo and   van Strien implies the
non-existence of wandering intervals for polynomials on the real line.
%

One difficulty  is to define a notion of  Fatou set   in this context. 
Let us just note here that for a real polynomial,
the Fatou set as defined in \cite{martens melo strien}
contains the  intersection of the complex Fatou set with the real line but could {\em a priori} be larger.

The problem  was also studied in the  non-Archimedian setting, in particular in
 the work of   Benedetto \cite{benedetto} and   Trucco \cite{trucco}.

 \medskip \begin{center}$\diamond$
\end{center}
\medskip

The question of the existence of wandering Fatou components makes sense in higher dimension, and was put forward by many authors
since the 1990's (see e.g. \cite{fs}). Higher dimensional transcendental mappings with wandering domains
can be constructed from one-dimensional examples by simply taking products.
 An example of a  transcendental biholomorphic map in $\C^2$ with   a wandering Fatou component oscillating to infinity was constructed
 by Forn\ae ss and   Sibony   in \cite{fs transcendental}.

For  higher dimensional polynomials and rational mappings, it is widely acknowledged that quasi-conformal methods break down,
so a direct approach to generalize Sullivan's Theorem  fails.
Besides this observation, little was known about the problem so far.

Recently, M. Lyubich suggested that  polynomial skew products
 with wandering domains might be constructed by using parabolic implosion techniques.
 The idea was to combine slow convergence  to an invariant fiber and parabolic transition in the fiber direction, to produce
 orbits shadowing those of a Lavaurs map (see below for a more precise description).

In this paper, we bring this idea to completion,
thereby providing the first examples of higher dimensional polynomial mappings
with wandering domains.

\begin{maintheorem}
There exists an endomorphism $\Skew:\P^2(\C)\to \P^2(\C)$, induced by  a polynomial skew-product mapping
$\Skew:\C^2\to \C^2$,
possessing a wandering Fatou component.
\end{maintheorem}

Let us point out that the orbits in these wandering domains are bounded.
The 
approach is in fact essentially local.  A more detailed statement is the following (see below for the definition of Lavaurs maps).

\begin{maintheoremprecised}
Let $f:\C\to \C$ and $g:\C\to \C$ be polynomials of the form
\begin{equation}\label{eq:fg}
f(z) = z+z^2+\O(z^3)\and g(w) = w-w^2+\O(w^3).
\end{equation}
If  the Lavaurs map $\L_f:\bpf\to \C$ has an attracting fixed point, then the skew-product map
 $\Skew:\C^2\to \C^2$ defined by
\begin{equation}\label{eq:skew}
\Skew(z,w) := \left(f(z)+\frac{\pi^2}{4} w,g(w)\right)
\end{equation}
admits a wandering Fatou component.
\end{maintheoremprecised}


Notice that if $f$ and $g$ have the same degree, $\Skew$ extends to a holomorphic self-map on $\P^2(\C)$.
Observe also that examples in arbitrary dimension $k\geq 2$ may   be obtained from this result
by simply considering   products mappings of the form  $(\Skew,Q)$,
where $Q$ admits  a fixed Fatou component.

\medskip

Before explaining what the Lavaurs map is, let us give some explicit examples satisfying the assumption of the Main Theorem.

\begin{mexample}\label{ex:complex}
Let  $f:\C\to \C$ be the cubic  polynomial
$f(z) = z+z^2+az^3$, and   $g$ be as in \eqref{eq:fg}. If  $r>0$ is sufficiently small and $a\in D(1-r, r)$, then
the polynomial skew-product $\Skew$ defined in \eqref{eq:skew} admits a wandering Fatou component.
\end{mexample}

Numerical experiments  suggest that the value $a=0.95$ works (see Figure \ref{fig:0.95} on page \pageref{fig:0.95}).

\medskip

In view of the results of \cite{martens melo strien} cited above,
it is also of interest to look for real polynomial mappings with wandering Fatou domains intersecting $\R^2$. Our method also provides
such examples.

\begin{mexample}\label{ex:real}
Let $f:\C\to \C$ be the degree $4$ polynomial  defined by
\[f(z):=z+z^2+bz^4\with b\in \R.\]
There exist parameters $b\in (-8/27,0)$ such that if $g$ is as in \eqref{eq:fg},  the
polynomial skew-product $\Skew$ defined in \eqref{eq:skew}
admits a wandering Fatou component intersecting~$\R^2$.
\end{mexample}

Numerical experiments  suggest that the  parameter $b=-0.2136$ satisfies this property.
We illustrate this phenomenon graphically in Figure  \ref{realwandering} for the mapping~$\Skew$ defined by
\begin{equation}\label{eq:defFreal}
(z,w)\longmapsto \left(z+z^2-0.2136 z^4 +\frac{\pi^2}{4}w, w-w^2\right)
\end{equation}
The set of points $(z,w)\in \R^2$ with bounded orbit is contained in the rectangle $(-3, 3)\times (0,1)$.
The topmost image in
Figure \ref{realwandering} displays  this set of points.
 The bottom right image is a zoom near the point $(z=-0.586, w=1/(1000^2))$. A wandering component  is visible
 (in green). The bottom left image displays the window $-3<z<3$ and $1/1003^2<w<1/999^2$;
 the orbit of a point $(z_0,w_0)$ contained in the wandering domain is indicated. The coordinate $w_0$ is close to
 $1/1000^2$ and we plotted the first $2001$ and the next $2003$ iterates.
 The points are indicated in black except $(z_0,w_0)$, $\Skew^{\circ 2001}(z_0,w_0)$ and $\Skew^{\circ 2001+2003}(z_0,w_0)$
 which are colored in red.
 These peculiar values are explained by
 the proof of the Main Theorem (see Proposition \ref{key} below).

\begin{figure}[htbp]
\centering
\def\svgwidth{11cm}
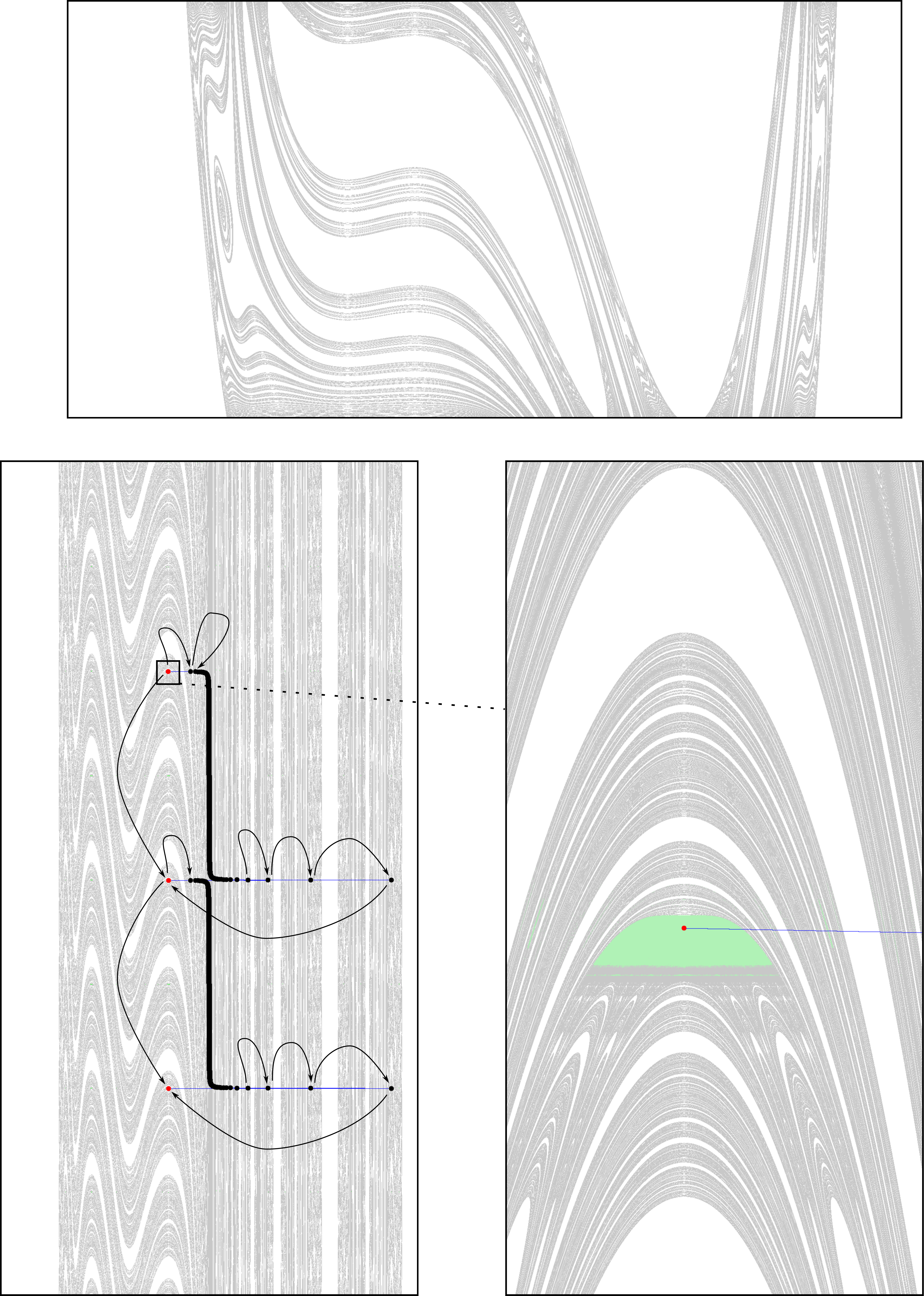
%
\caption{\small A real wandering domain for the map $\Skew$ defined in (\ref{eq:defFreal}).
\label{realwandering}}
\end{figure}

\medskip \begin{center}$\diamond$
\end{center}
\medskip

Using  skew-products to construct new examples is natural as it allows to build on one-dimensional dynamics.
This idea was already used several times in holomorphic dynamics (see e.g. \cite{jonsson, dujardin}) and beyond.

Fatou components of polynomial skew-products were studied in several earlier works. Lilov showed in his thesis
\cite{lilov} that
 skew-products do not have wandering components near a super-attracting invariant fiber.
 In \cite{peters vivas} it was shown that the argument
 used in \cite{lilov} breaks down near an attracting fiber.
 The construction in \cite{peters vivas} uses a repelling fixed point in the invariant fiber
 with multiplier equal to the inverse of the multiplier of the attracting fiber. This resonance between multipliers
  induces a dynamical behavior that cannot occur in one-dimensional dynamics.

  In the same vein, an  invariant fiber at the center of a Siegel disk was used in
  \cite{boc-thaler fornaess peters} to construct a non-recurrent Fatou component with limit set isomorphic to a punctured disk. In that
  construction the invariant fiber also contains a Siegel disk, but with the opposite rotation number.

   We note that the construction presented
  in this paper   uses an invariant parabolic fiber which again contains a parabolic point.

\medskip \begin{center}$\diamond$
\end{center}
\medskip

To explain the notion of Lavaurs map and
the strategy of the proof, we need to recall some facts on parabolic dynamics
(see Appendix \ref{app} for more details). Let $f$ be a polynomial of the form
\begin{equation}
\notag
f(z) = z+z^2+az^3+ \O(z^4)\forsome a\in \C.
\end{equation} and denote by
\[\bpf:=\bigl\{z\in \C~;~f^{\circ n}(z)\overset{\neq}{\underset{n\to +\infty}\longrightarrow} 0\bigr\}\]
the parabolic basin of $0$.
It is known that there is an {\em attracting Fatou coordinate} $\phi_f:\bpf\to \C$ which
conjugates $f$ to the translation $T_1$ by $1$:
\[\phi_f\circ f = T_1\circ \phi_f.\]
This Fatou coordinate may be normalized by\footnote{The branch of $\log$ used in this normalization as well as in
 the next one is the branch defined in $\C\setminus\R^-$ which vanishes at $1$}
\[\phi_f(z) = -\frac{1}{z} -(1-a)  \log \left(-\frac{1}{z} \right)+ \o(1)\as \Re(-1/z)\to +\infty.\]
Likewise, there is a {\em repelling Fatou parameterization}
$\psi_f:\C\to \C$ satisfying
\[\psi_f\circ T_1 = f\circ \psi_f,\]
which may be normalized by
\[-\frac{1}{\psi_f(Z)} = Z +(1-a)  \log (-Z) + \o(1)\as \Re(Z)\to -\infty.\]
The (phase $0$) Lavaurs map $\L_f$ is defined
by\footnote{The reader who is familiar with Lavaurs maps should notice that the
 choice of {phase} was determined by the  normalization of Fatou coordinates.}
\[\L_f:=\psi_f\circ \phi_f:\bpf\to \C.\]
Mappings of this kind   appear when   considering  high iterates of small perturbations of $f$:
this phenomenon is known as {\em parabolic implosion}, and will play a key role in this paper. The reader
is referred  to
the text of  Douady \cite{douady} for a gentle introduction to this topic, and to
\cite{shishikura} for a more detailed presentation by Shishikura.
 (Semi-)parabolic implosion was recently studied  in the context of
 dissipative polynomial automorphisms of $\C^2$ by
Bedford, Smillie and Ueda \cite{bsu} (see also \cite{dl}).

Let us already point out that since our results
 do not fit   into the classical framework,
 our treatment of parabolic implosion will be essentially self-contained.
 As it turns out, our computations bear some similarity with those of \cite{bsu}.

\medskip

%
%
%
%
%
%
%
%

We can now explain the
 strategy of the proof of the Main Theorem. Let $\bpg$ be the parabolic basin of $0$ under iteration of $g$.
If $w\in \bpg$, then  $g^{\circ m}(w)$ converges to~0 like $1/m$.
We want to choose $(z_0,w_0)\in \bpf\times \bpg$ so that the first coordinate of~$\Skew^{\circ m}(z_0, w_0)$
 returns  close to the attracting fixed point of $\L_f$ infinitely many  times. The proof is designed so that the return
 times are the integers $n^2$ for $n\geq n_0$. So, we have to analyze the orbit
 segment between $n^2$ and  $(n+1)^2$, which is of length $2n+1$.

 For large $n$,
 the first  coordinate of $\Skew$ along this orbit segment
is approximately
$$f(z)+ \eps^2 \with
  \frac{\pi}{\eps} = 2n.$$
 The Lavaurs Theorem from parabolic implosion asserts that  if $\frac{\pi}{\eps} = 2n$, then  for large  $n$,
the $(2n)^{\rm th}$ iterate of $f(z)+ \eps^2$ is approximately equal to $\L_f(z)$ on $\bpf$.

Our setting  is slightly different
since $\eps$  keeps decreasing along the orbit. Indeed on the first coordinate  we are taking the composition of $2n+1$
 transformations
$$f(z)+ \eps_k^2\with \frac{\pi}{\eps_k} \simeq 2n+ \frac{k}{n} \and 1 \leq k \leq    2n+1 .$$
The main step of the proof of the Main Theorem consists in a detailed analysis of this perturbed situation. We show that
the decay of $\eps_k$ is   counterbalanced
by taking {\em exactly}  one additional iterate of $\Skew$. The precise statement is the following.


\begin{mproposition}\label{key}
As $n\to +\infty$, the sequence of maps
\[\C^2\ni (z,w) \mapsto \Skew^{\circ 2n+1}\bigl(z,g^{\circ n^2}(w)\bigr)\in \C^2\]
converges locally uniformly in $\bpf\times \bpg$ to the map
\[\bpf\times \bpg\ni (z,w)\mapsto \bigl( \L_f(z),0\bigr)\in \C\times\{0\}.\]
\end{mproposition}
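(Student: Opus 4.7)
Writing $\Skew^{\circ k}(z, g^{\circ n^2}(w)) = (z_k, w_k)$, I first dispose of the second coordinate. Since $g(w) = w - w^2 + O(w^3)$ is parabolic at $0$, the standard asymptotic $g^{\circ m}(w) = (1+o(1))/m$, locally uniform on $\bpg$, immediately gives $w_{2n+1} = g^{\circ(n+1)^2}(w) \to 0$ locally uniformly (the second component of the limit) and $w_k = (1+o(1))/(n^2+k)$ uniformly for $k \in \{0,\dots,2n+1\}$. For the first coordinate, note that $z_{k+1} = f(z_k) + \eps_k^2$ with $\eps_k := (\pi/2)\sqrt{w_k}$, so that $\eps_k = (1+o(1))\,\pi/(2\sqrt{n^2+k})$ and hence $\pi/\eps_k = (1+o(1))\cdot 2\sqrt{n^2+k}$ uniformly in $k$. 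In particular $\pi/\eps_n = 2n+1 + o(1)$, so the iteration count $2n+1$ matches $\pi/\eps_k$ exactly at the midpoint $k \simeq n$, which is the expected Lavaurs phase-$0$ regime.

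The task for the first coordinate is thus to establish a time-inhomogeneous version of Lavaurs' theorem, showing that the slow variation of $\eps_k$ with $k$ does not disturb the conclusion. I would split the $2n+1$ iterates into three regimes. \emph{Incoming:} $z_k$ remains in a fixed compact of $\bpf$, and we track $\phi_f(z_k)$, which advances by $1+O(\eps_k^2) = 1+O(1/n^2)$ per step, with cumulative error $O(1/n)$; at exit $z_k$ lies at distance $\sim 1/n$ from $0$ on the attracting side. \emph{Gate:} $|z_k|$ is of order $1/n$ and the discrete iteration is approximated by the time-$1$ map of the non-autonomous Riccati flow $\dot z = z^2 + \eps(t)^2$ with $\eps(t)^2 = \pi^2/(4(n^2+t))$; the higher-order terms in $f$ contribute only $O(1/n^3)$ per step, and the variation of $\eps_k$ is likewise $O(1/n^3)$ per step, yielding total error $o(1)$ over the $O(n)$ gate steps. \emph{Outgoing:} $z_k$ emerges on the repelling side and is tracked using the repelling Fatou parameterization $\psi_f$, so that $z_{2n+1} \to \psi_f(\phi_f(z_0)) = \L_f(z_0)$.

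The main obstacle is the gate regime, where neither Fatou coordinate of $f$ is directly available and a genuine non-autonomous analysis is required. A clean approach is the Riccati substitution $z = -\dot u/u$, which transforms the flow into the linear equation $\ddot u + \eps(t)^2 u = 0$. Setting $s := n^2 + t$ and $\xi := \pi\sqrt{s}$, and writing $u = \sqrt{s}\, v(\xi)$, this reduces to Bessel's equation of order one, $v'' + v'/\xi + (1 - 1/\xi^2)\,v = 0$. This is precisely where the particular coefficient $\pi^2/4$ appearing in~\eqref{eq:skew} plays its role. As $s$ traverses $[n^2, (n+1)^2]$, the argument $\xi$ traverses exactly one half-period $[\pi n, \pi(n+1)]$ of the Bessel oscillation; hence $u$ has exactly one simple zero in the interval, corresponding to the single transit through $\infty$ in $z$ (i.e.\ the crossing of the parabolic neck). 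Matching the endpoint asymptotics of $u$ and $\dot u / u$ with $\phi_f$ at the attracting end and with $\psi_f^{-1}$ at the repelling end then yields the phase-$0$ Lavaurs behavior.

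The technical heart lies in (i) proving that the discrete orbit shadows the Riccati flow to precision $o(1)$ uniformly across the gate, carefully propagating the errors from the higher-order $f$-terms, from the step-by-step variation of $\eps_k$, and from the $o(1/n^2)$ remainder in the asymptotic for $w_k$; and (ii) correctly matching the ODE solution to the Fatou coordinates at the boundaries of the gate, using the asymptotics $\phi_f(z) = -1/z - (1-a)\log(-1/z) + o(1)$ and the analogous expansion for $\psi_f$. The balancing of errors is delicate, but each contribution has the right order of magnitude: the single extra iterate ($2n+1$ rather than $2n$) exactly absorbs the deficit coming from the monotonic decay of $\eps_k$ across the transit, consistent with the identity $\pi/\eps_n = 2n+1 + o(1)$ observed above.
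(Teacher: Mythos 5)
Your Riccati/Bessel observation is genuinely different from the paper's route and is correct as a formal calculation: with $\eps(t)^2 = \pi^2/\bigl(4(n^2+t)\bigr)$, the substitutions $z=-\dot u/u$, $s=n^2+t$, $\xi=\pi\sqrt s$, $u=\sqrt s\,v(\xi)$ do produce Bessel's equation of order one, and $\xi$ advances by exactly $\pi$ as $s$ crosses $[n^2,(n+1)^2]$, giving a clean explanation for the coefficient $\pi^2/4$ and the transition time $2n+1$. The paper instead builds explicit cotangent-based approximate Fatou coordinates $\varphi_w=\chi_w\circ\psi_w^{-1}$ and proves $\varphi_{g(w)}\circ f_w\circ\varphi_w^{-1}(\mz)=\mz+\sqrt w/2+\o(w)$ (Properties~\ref{compatt}--\ref{approx}), so the gate analyses are methodologically distinct.

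There is, however, a genuine gap in your error analysis. You assert that the higher-order terms of $f$ contribute $O(1/n^3)$ per step and hence $o(1)$ in total over the $O(n)$ gate steps; but this is the estimate in the $z$-coordinate, and it is not the one that propagates through the gate. What must be controlled is the drift in the conjugating coordinate: since $\phi_f'(z)\sim 1/z^2$, the cubic term $az^3$ contributes of order $(1-a)\,|z_k|$ per step there, and $\sum_k |z_k| \asymp \log n$ across the gate (because $|z_k|\approx \frac{\pi}{2n}\bigl|\cot\bigl(\pi(k-n^2)/(2n)\bigr)\bigr|$, which is $\sim 1/j$ near the ends $j=k-n^2\sim k_n$). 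This is not $o(1)$; it only vanishes after an approximate cancellation coming from the antisymmetry of $z_k$ about the midpoint of the gate. That cancellation is exactly what the paper's correction $\chi_w(\mz)=\mz-\tfrac{\sqrt w(1-a)}{2}\log\bigl(\tfrac{2\sin(\pi\mz)}{\pi\sqrt w}\bigr)$ is engineered to produce (see Step~4 of the proof of Property~\ref{approx}, where the $\tfrac{\pi(1-a)w}{4}\cot(\pi\mz)$ residual is cancelled). Your proposal, which keeps only the quadratic model in the gate, neither identifies this cancellation nor explains how the endpoint matching with $\phi_f$ and $\psi_f^{-1}$ absorbs the $(1-a)\log$ terms in their asymptotic expansions. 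A second, related issue: you stop the incoming phase only at $|z_k|\sim 1/n$, i.e.\ after $\sim n$ steps tracked by $\phi_f$; the paper deliberately stops at $|z_k|\sim n^{-\alpha}$ with $\tfrac12<\alpha<\tfrac23$, precisely so that the cumulative incoming error $O(k_n^3/n^2)$ tends to zero. So the Bessel viewpoint is attractive and may well be viable, but the estimates you yourself call ``delicate'' are exactly those that the paper's $\chi_w$-machinery and the choice of the exponent $\alpha$ are designed to control, and the proposal as written does not supply a substitute for them.
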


See Figure \ref{fig:key} for a graphical  illustration of  this Proposition.
\begin{figure}[htbp]
\centerline{
\framebox{\includegraphics[height=8cm]{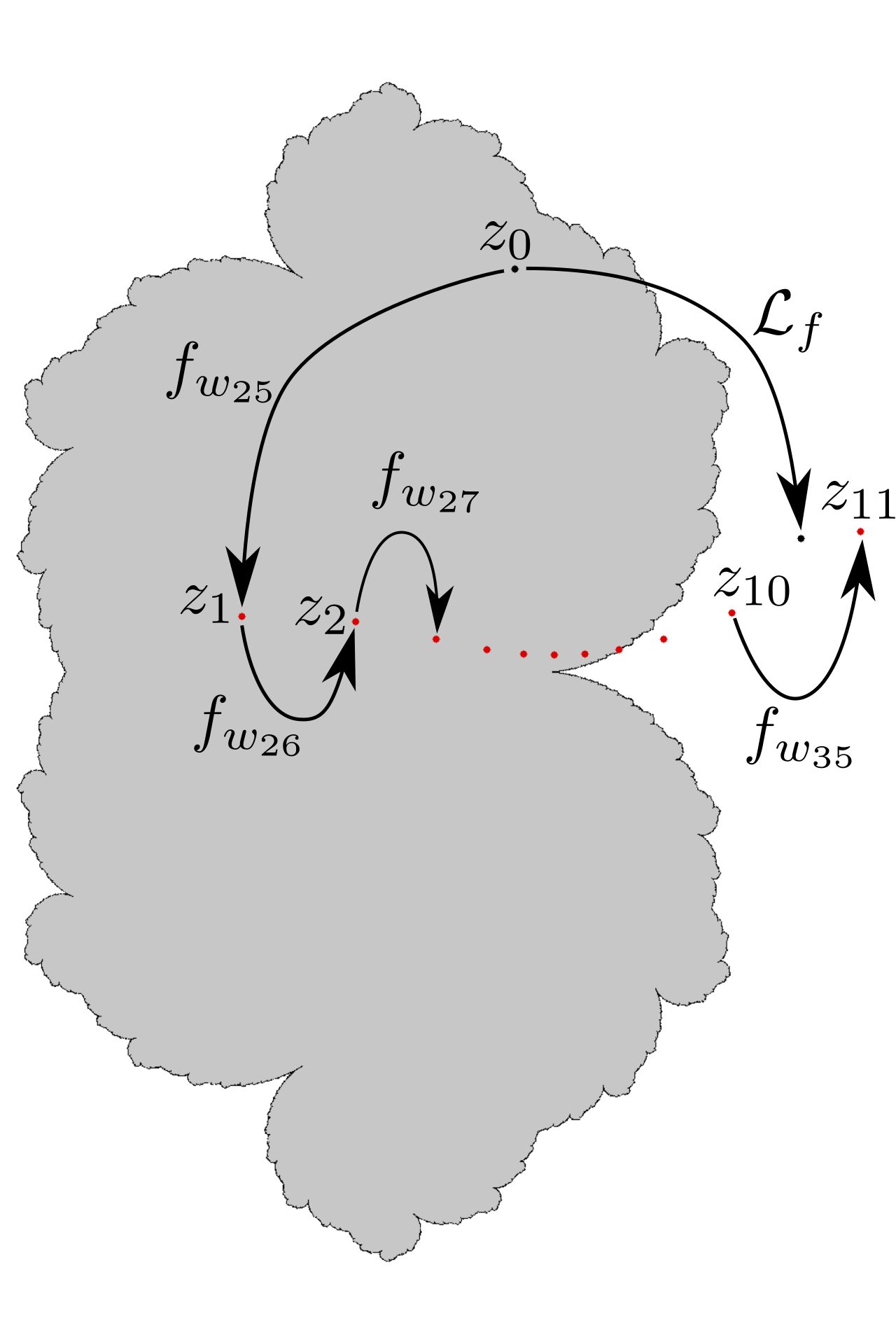}}\quad
\framebox{\includegraphics[height=8cm]{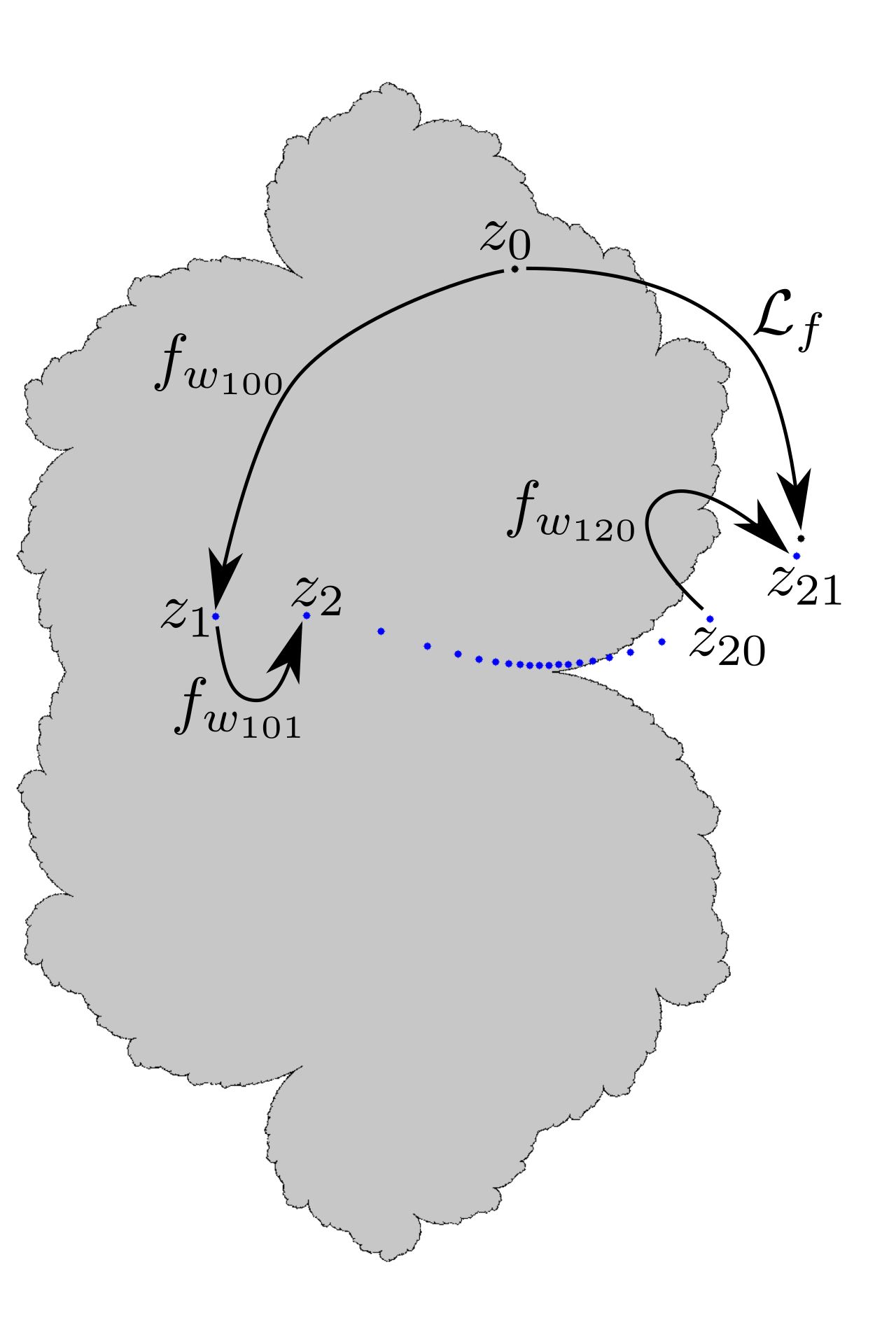}}
}
\caption{\small Illustration of Proposition \ref{key} for $f(z) = z+z^2+0.95z^3$ and $g(w)=w-w^2$. The parabolic basin $\bpf$ is colored in
grey. It is invariant under $f$, but not under $f_w:=f+\frac{\pi^2}{4}w$ for $w\neq 0$. The Lavaurs map $\L_f$ is defined on $\bpf$. The point $z_0=-0.05+0.9\i$ and its image $\L_f(z_0)$ are indicated. The other points are the points $z_k$ which are defined by $\Skew^{\circ k}(z_0,w_{n^2}\big)  = \bigl(z_k,w_{n^2+k}\bigr)$ for $1\leq k\leq 2n+1$ and $w_m=g^{\circ m}(1/2)$. If $n$ is large enough, the point $z_{2n+1}$ is close to $\L_f(z_0)$. Left: $n=5$, there are $11$ red points. Right: $n=10$, there are $21$ blue points. \label{fig:key}}
\end{figure}

\medskip

From this point,   the proof of the Main Theorem
is easily completed: if $\xi$ is an attracting fixed point of $\L_f$ and if $(z_0,w_0)\in \bpf\times \bpg$ is chosen so that $\Skew^{\circ n_0^2}(z_0,w_0)$ is close to $(\xi,0)$ for some large $n_0$, then $\Skew^{\circ (n_0+1)^2}(z_0,w_0)$ gets closer to $(\xi,0)$ and we can repeat the process to get that the sequence $\bigl(\Skew^{\circ n^2}(z_0,w_0)\bigr)_{n\geq 0}$ converges to $(\xi,0)$.  
%
 Since this reasoning is valid on an open set of initial conditions,  these points belong to some  Fatou component.
Simple considerations show that it cannot be preperiodic, and the result follows.

Let us observe that by construction, the $\omega$-limit set of any point in the wandering Fatou component consists of a single 
 two-sided orbit  of $(\xi, 0)$ under $P$, plus the origin. 


\medskip

We give two different approaches for constructing
examples satisfying the assumptions of the Main Theorem. The  next proposition corresponds to Example~\ref{ex:complex}.

\begin{mproposition}\label{attracting}
Consider the cubic polynomial $f:\C\to \C$ defined by
\[f(z):=z+z^2+az^3\with a\in \C.\]
If $r>0$ is sufficiently close to $0$ and $a\in D(1-r,r)$,  then  the Lavaurs map $\L_f:\bpf\to \C$ admits an attracting fixed point.
\end{mproposition}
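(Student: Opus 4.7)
The plan is a perturbative argument centered at $a = 1$, where the residue $1 - a$ of $f_a$ at the parabolic fixed point $0$ vanishes. The first step is to switch to the coordinate $u = -1/z$ at the parabolic fixed point. In this coordinate, $f_a$ becomes
\[F_a(u) = u + 1 + \frac{\gamma}{u} + \O\left(\frac{1}{u^2}\right), \qquad \gamma := a - 1,\]
and the parabolic basin is a neighborhood of infinity. A small miracle at $a=1$ is that $F_1$ is the explicit rational map $F_1(u) = u^3/(u^2 - u + 1)$, which has $\infty$ as its parabolic fixed point and a super-attracting fixed point at $u=0$; this gives some concrete anchor for the analysis. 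The normalizations of the Fatou coordinates $\phi_{f_a}$ and $\psi_{f_a}$ translate to
\[\Phi_a(u) = u - (1-a)\log u + C_a + \o(1) \quad \text{as } \Re u \to +\infty\]
and a symmetric expression for $\Psi_a^{-1}$ as $\Re u \to -\infty$, with constants $C_a, C'_a$ that vary holomorphically in $a$.

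The next step is to observe that a point $z \in \bpf$ is a fixed point of $\L_{f_a}$ exactly when $\phi_{f_a}(z) - \psi_{f_a}^{-1}(z) \in \Z$ (for some branch of $\psi_{f_a}^{-1}$). Since the difference is $f_a$-invariant, it descends to a holomorphic function on the attracting Écalle cylinder. Its two boundary values at the parabolic end of the cylinder (accessed through the upper and lower halves of the attracting petal) are $\pm i\pi(1-a) + (C_a - C'_a) + \o(1)$. I would then argue, either by explicit Rouché-type estimates or by an implicit-function/persistence argument, that for $a$ near $1$ there is a fixed point $z_a^* \in \bpf$ lying deep in the basin but uniformly away from~$0$; at $a=1$ exactly, this fixed point should have multiplier of modulus one, i.e.\ be the parabolic degeneration we perturb away from.

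The third step is the multiplier computation. Since $\L_{f_a} = \psi_{f_a} \circ \phi_{f_a}$ commutes with $f_a$, its derivative at $z_a^*$ is determined by the derivatives of $\phi_{f_a}$ and $\psi_{f_a}$ at the corresponding cylinder point. A first-order Taylor expansion in $a - 1$ should yield
\[\L_{f_a}'(z_a^*) = 1 + \alpha\,(a-1) + \o(a-1)\]
for some nonzero constant $\alpha \in \C$. The attracting region $\{|\L_{f_a}'(z_a^*)| < 1\}$ is then, to first order, the half-plane $\{\Re(\bar\alpha\,(a-1)) < 0\}$. To obtain an attracting fixed point on the disk $D(1-r, r)$ for all small $r > 0$, it suffices that $\alpha$ is real and positive (so that the attracting half-plane is $\{\Re a < 1\}$, which contains all the disks $D(1-r,r)$), and this is the statement that needs to be verified.

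The main obstacle is unquestionably the sign determination of $\Re\alpha$. The coefficient $\alpha$ is non-local: it is built from the dependence of the Ecalle–Voronin data of $F_a$ on the parameter, and there is no purely local computation that delivers its value. I expect that one has to relate $\alpha$ to the derivatives $\partial_a C_a$ and $\partial_a C'_a$ at $a=1$, and then use either a global argument specific to the rational map $F_1 = u^3/(u^2-u+1)$ (exploiting, for instance, its super-attracting fixed point at $u=0$) or an integral representation of the Ecalle invariants, to establish the correct sign. Everything else in the plan is comparatively routine parabolic-implosion bookkeeping.
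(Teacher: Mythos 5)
Your overall strategy is aligned with the paper's --- perturb from the degenerate case $a=1$, show the Lavaurs map has a fixed point whose multiplier is $1+\alpha(a-1)+\o(a-1)$, and argue that $\alpha$ is a positive real number so that the attracting locus contains the horodisks $D(1-r,r)$. You correctly guess the answer (the paper gets $\alpha=2\pi^2$) and you correctly isolate the crux: proving that $\alpha$ has the right sign. But that crux is precisely the step you leave open, and your two candidate fixes (exploiting the explicit rational form $u^3/(u^2-u+1)$ at $a=1$, or an integral representation of the \'Ecalle invariants) do not point toward the mechanism the paper actually uses. So there is a genuine gap exactly at the decisive step.

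The piece you are missing is that there \emph{is} a local computation available, just not at the fixed point you are looking at. Pass to the horn map $e_a:\widehat U_a\to\Chat$ obtained by projecting $\E_a=\phi_{f_a}\circ\psi_{f_a}$ through $\exp(2\pi\i\,\cdot)$. The cylinder end is now a genuine fixed point of $e_a$ at $0$, and its multiplier is read off directly from the logarithmic term $-(1-a)\log$ in the asymptotic normalizations of $\phi_{f_a}$ and $\psi_{f_a}$: one gets $\E_a(Z)=Z-\pi\i(1-a)+\o(1)$ as $\Im Z\to+\infty$, hence $e_a'(0)=\e^{2\pi^2(1-a)}$. This is the ``purely local computation'' you thought did not exist; it lives at $0\in\widehat U_a$ rather than at the interior fixed point $\xi(a)$. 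One then needs to know that at $a=1$ the fixed point of $e_1$ at $0$ has multiplicity exactly $2$, so that it splits into $0$ plus a single nearby fixed point $\xi(a)$ for $a\neq 1$; the paper gets this from Epstein's theory of finite-type analytic maps (bounding petals by singular values). Finally the holomorphic fixed point (residue) formula
\[
\frac{1}{1-\e^{2\pi^2(1-a)}}+\frac{1}{1-\rho(a)}=\frac{1}{2\pi\i}\int_\gamma\frac{dz}{z-e_a(z)}=\O(1)
\quad\text{as } a\to 1
\]
forces $\dfrac{1}{1-\rho(a)}=\dfrac{1}{2\pi^2(1-a)}+\O(1)$, which is exactly the statement $\alpha=2\pi^2>0$, and the half-plane/horodisk comparison you describe then goes through. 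You should also adjust your picture of the degeneration at $a=1$: the fixed point $\xi(a)$ does not persist as an indifferent fixed point of $\L_{f_1}$ in $\bpf$; rather it merges into the cylinder end as $a\to1$, i.e.\ into the double fixed point of $e_1$ at $0$.
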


To prove this proposition we construct a fixed point for the Lavaurs map by perturbation from the degenerate situation $a=1$ and
estimate its multiplier by a residue computation.

\medskip

To construct the real examples of Example \ref{ex:real}, we use the Intermediate Value Theorem to find a real
Lavaurs map with a real periodic critical point.

\begin{mproposition}\label{superattracting}
Consider the degree $4$ polynomial $f:\C\to \C$ defined by
\[f(z):=z+z^2+bz^4\with b\in \R.\]
Then there exists a  parameter $b\in (-8/27,0)$ such that the Lavaurs map
$\L_{f}$ has a fixed critical point in $\R\cap {\cal B}_{f}$.
\end{mproposition}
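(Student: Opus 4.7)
The plan is to apply the Intermediate Value Theorem to a real one-parameter family of candidate fixed critical points, indexed by $b$ in the closed interval $[-8/27,0]$.

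First I would identify a natural family of real critical points in the basin. The derivative $f_b'(z)=1+2z+4bz^3$ has a simple real root $c(0)=-1/2$, while the discriminant of $f_b'$ vanishes at $b=-8/27$, where $f_b'$ acquires a double real root at $-3/4$. Following the branch $c(b)$ of the critical point issued from $c(0)=-1/2$ gives a continuous family with $c(-8/27)=-3/4$. A direct analysis of $f_b$ on the negative real axis, using the factorization $f_b(x)=x(1+x+bx^3)$, shows that the forward orbit of $c(b)$ stays in the negative reals and increases monotonically to $0$, so that $c(b)\in\mathbb{R}\cap\mathcal{B}_{f_b}$ for every $b\in[-8/27,0]$.

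Next I would observe that $c(b)$ is automatically a critical point of the Lavaurs map: differentiating Abel's equation $\phi_{f_b}\circ f_b = T_1\circ\phi_{f_b}$ gives $\phi_{f_b}'(z)=\phi_{f_b}'(f_b(z))\cdot f_b'(z)$, so $\phi_{f_b}'(c(b))=0$, and hence by the chain rule $\mathcal{L}_{f_b}'(c(b))=0$. Thus to produce a real fixed critical point it suffices to solve the single real equation
\begin{equation*}
h(b) := \mathcal{L}_{f_b}(c(b)) - c(b) = 0.
\end{equation*}
Because $f_b$ has real coefficients for real $b$, and because the normalized attracting Fatou coordinate and repelling parametrization (as defined in the excerpt) are real on the real parts of their domains, $h$ is real-valued; continuous dependence of the Fatou data on the local $2$-jet of $f_b$ at $0$ ensures that $h$ is continuous on $[-8/27,0]$.

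The decisive step, and the main obstacle, is to show that $h$ changes sign on $(-8/27,0)$. At $b=0$ the polynomial $f_0(z)=z+z^2$ is conjugate via $z\mapsto z+1/2$ to $w\mapsto w^2-1/4$, and the associated Fatou coordinate and parametrization admit very explicit descriptions; combined with the explicit orbit $-1/2\mapsto -1/4\mapsto -3/16\mapsto\cdots$, these should locate $\mathcal{L}_{f_0}(-1/2)$ on a definite side of $-1/2$. At $b=-8/27$, the critical orbit $-3/4\mapsto -9/32\mapsto\cdots$ is likewise explicit, and the doubled critical behavior at $-3/4$ provides complementary information that should place $\mathcal{L}_{f_{-8/27}}(-3/4)$ on the opposite side of $-3/4$. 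Once opposite signs of $h(0)$ and $h(-8/27)$ are established, the Intermediate Value Theorem yields $b^{*}\in(-8/27,0)$ with $\mathcal{L}_{f_{b^{*}}}(c(b^{*}))=c(b^{*})$, which together with the second step furnishes the required real fixed critical point of $\mathcal{L}_{f_{b^{*}}}$.
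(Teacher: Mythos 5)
The broad architecture of your proposal is the same as the paper's: exhibit a continuously varying real critical point in the basin, note that it is automatically a critical point of the Lavaurs map (Abel's equation), and invoke the Intermediate Value Theorem on the real-valued function measuring the defect from being a fixed point. Your Claims about $c(b)\in\R\cap\bpf$ and continuity/reality of the data mirror the paper's Claims~1 and~2.

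However, there is a genuine gap in the crucial step, and it is connected to your choice of parametrization. You parametrize by $b\in[-8/27,0]$ and follow the critical-point branch from $c(0)=-1/2$ to $c(-8/27)=-3/4$. The paper instead parametrizes directly by the critical point $c\in(-3/2,-1/2]$ via $b=-(1+2c)/(4c^3)$, which sweeps $b$ from $0$ down to $-8/27$ (at $c=-3/4$) and \emph{back up} to $-4/27$ as $c\to-3/2$. Your interval of $c$-values is only the sub-arc $[-3/4,-1/2]$. This matters because the paper obtains the negative sign of $\fL(c)-c$ not at $b=-8/27$ but by letting $c\to-3/2$: in that limit $f_c(c)=\frac34 c+\frac12 c^2\to 0$, so the critical orbit enters the parabolic basin more and more slowly, $\phi_{f_c}(c)\sim\frac{4}{3t}-\log(\cdot)$ where $c=-\frac32+t$, while $\psi_{f_c}^{-1}$ applied to a fixed point of the repelling petal is essentially constant. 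Matching the two asymptotics produces a sequence $c_n\to-3/2$ with $\fL(c_n)=f_{c_n}^{\circ(n-s_n-1)}(x)$ landing far to the left of $c_n$, because the critical orbit of $f_{-3/2}$ eventually escapes past the outermost fixed point to $-\infty$. None of this mechanism is available at your endpoint $b=-8/27$ (where $c=-3/4$ and $f_c(c)=-9/32$ is bounded away from $0$), and you offer no argument at all for the sign of $\L_{f_{-8/27}}(-3/4)+3/4$ beyond the phrase ``the doubled critical behavior \ldots provides complementary information.'' That is not a proof, and it is not a priori clear that $\fL(-3/4)<-3/4$: the paper neither asserts nor uses it.

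Your treatment of the other endpoint is also incomplete. You suggest that the Fatou coordinate and repelling parametrization of $f_0(z)=z+z^2$ (or of $w^2-1/4$) ``admit very explicit descriptions,'' but they do not --- they are not elementary functions, and conjugating to $w^2-1/4$ does not change that. The explicit critical orbit $-1/2\mapsto-1/4\mapsto-3/16\mapsto\cdots$ gives you $\phi_{f_0}$ up to the additive normalization but says nothing directly about $\psi_{f_0}\circ\phi_{f_0}(-1/2)$. The paper's Claim~3 instead uses the structural observation that $\psi_{f_0}(\R)=\R^+$ (points far left on the real axis map into $\R^+$ near $0$, and $f_0(\R^+)=\R^+$, and $\psi_{f_0}(Z)=f_0^{\circ m}\psi_{f_0}(Z-m)$), so $\fL(-1/2)>0>-1/2$ with no explicit formula needed. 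You would be well served to adopt this argument. But the decisive missing idea is the asymptotic analysis of $\phi_{f_c}(c)$ as $c\to-3/2$, and your parametrization by $b$ on $[-8/27,0]$, which restricts you to $c\in[-3/4,-1/2]$, cuts you off from it.
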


In particular this fixed point is super-attracting so we are in the situation of the Main Theorem.

\medskip \begin{center}$\diamond$
\end{center}
\medskip

The plan of the paper is the following.
 In Section \ref{sec:thm} we show how to deduce the Main
 Theorem   from  Proposition \ref{key}. In  Section \ref{sec:approxcoord} we develop a formalism of {\em approximate
 Fatou coordinates} in our context, which is necessary for proving Proposition \ref{key},
 which  is itself established in  Section \ref{sec_key}. The proofs in Sections \ref{sec:approxcoord}
  and \ref{sec_key} are rather technical, and are divided into a number of steps. A detailed outline will be given there.
Propositions \ref{attracting} and \ref{superattracting} are respectively established in
 Sections \ref{sec_lavaurs} and \ref{sec:real}.   Appendix \ref{app} summarizes the main properties of Fatou coordinates and Lavaurs maps.

 \section*{Acknowledgements}
This project grew up from  discussions between Misha Lyubich and Han Peters in relation with the work \cite{lyubich peters}.
It evolved to the current list of authors at the occasion of a meeting
of the ANR project LAMBDA in Universit\'e Paris-Est Marne-la-Vall\'ee in April 2014.
We are grateful to Misha Lyubich for his crucial input which ultimately led to the present work.

\section{Existence of wandering domains} \label{sec:thm}

In this section, we prove the Main Theorem assuming Proposition \ref{key}.

\medskip

Let $\xi \in \bpf$ be an attracting fixed point of the Lavaurs map $\L_f$.
 Let $V$ be a disk centered
at $\xi$,  chosen that $\L_f(V)$ is compactly contained in $V$. It follows that
 $\L_f^{\circ k}(V)$ converges to $\xi$ as $k\cv\infty$. Let also  $W \Subset \bpg$ be an arbitrary disk.

Denote by $\pr:\C^2\to \C$   the first coordinate projection, that is
$\pr(z,w):=z.$
According to Proposition \ref{key}, there exists   $n_0 \in \N$ such that for every $n\geq n_0$,
\[ \pr \circ  \Skew^{\circ (2n+1)}(V \times g^{\circ n^2}(W))\Subset V.\]
Let $U$ be a connected component of the open set $\Skew^{-n_0^2}\bigl(V\times g^{\circ n_0^2}(W)\bigr)$.

\begin{lemma}\label{limit}
The sequence $(\Skew^{\circ n^2})_{n\geq 0}$ converges locally uniformly to $(\xi,0)$ on $U$.
\end{lemma}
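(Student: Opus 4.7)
My plan is to track the orbit $(\zeta_n, \omega_n) := \Skew^{\circ n^2}(z_0, w_0)$ for $(z_0,w_0) \in U$, and show that Proposition~\ref{key} allows us to regard each ``return'' from time $n^2$ to $(n+1)^2$ as an approximate iterate of $\L_f$ in the first coordinate, together with a drift to $0$ in the second coordinate.

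For the second coordinate this is immediate: $\omega_n = g^{\circ n^2}(w_0)$, and by definition of $U$ we may write $\omega_{n_0} = g^{\circ n_0^2}(w_1)$ for some $w_1 \in W \Subset \bpg$. Applying $g^{\circ(n^2-n_0^2)}$ gives $\omega_n = g^{\circ n^2}(w_1)$ for every $n \geq n_0$, so $\omega_n \to 0$. For the first coordinate I would first check inductively that $\zeta_n \in V$ for all $n \geq n_0$: assuming $\zeta_n \in V$ and $\omega_n \in g^{\circ n^2}(W)$, the defining property of $n_0$ forces $\zeta_{n+1} = \pr(\Skew^{\circ(2n+1)}(\zeta_n, \omega_n)) \in V$. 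Then Proposition~\ref{key}, applied on the compacts $\overline V \Subset \bpf$ and $\{w_1\} \Subset \bpg$, yields $\zeta_{n+1} = \L_f(\zeta_n) + o(1)$ as $n\to\infty$, uniformly in $\zeta_n \in \overline V$.

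The remaining step is a standard perturbation-of-contraction argument. Since $\L_f(\overline V) \subset \L_f(V) \Subset V$, the Schwarz-Pick Lemma provides a uniform factor $\lambda < 1$ such that $\L_f$ is a $\lambda$-contraction in the Poincaré metric $d_V$ of $V$; in particular $d_V(\L_f(\zeta_n), \xi) \leq \lambda\, d_V(\zeta_n, \xi)$. Since both $\zeta_{n+1}$ and $\L_f(\zeta_n)$ eventually lie in a fixed compact subset of $V$, on which the Euclidean and Poincaré metrics are comparable, the Euclidean $o(1)$ above translates into a Poincaré $o(1)$, and I obtain
\[
d_V(\zeta_{n+1}, \xi) \;\leq\; \lambda\, d_V(\zeta_n, \xi) + o(1).
\]
A routine induction then yields $d_V(\zeta_n, \xi) \to 0$, so $\zeta_n \to \xi$.

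The only mildly subtle point is securing the uniform contraction factor $\lambda < 1$, which depends on the \emph{compact} containment $\L_f(V) \Subset V$ rather than merely $\L_f(V) \subset V$; this is built into our choice of $V$. Finally, all the above estimates are uniform in $(z_0,w_0)$ on compact subsets of $U$: indeed, for any compact $K \subset U$, $\pi_2(\Skew^{\circ n_0^2}(K))$ remains in a fixed compact subset of $g^{\circ n_0^2}(\overline W)$, and Proposition~\ref{key} provides uniform convergence on $\overline V \times \overline W$. This yields the asserted local uniform convergence $\Skew^{\circ n^2} \to (\xi, 0)$ on $U$.
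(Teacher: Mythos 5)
Your proof is correct, but it takes a genuinely different route from the paper's. The paper argues softly via normality: from the inclusion $\Skew^{\circ n^2}(U)\subseteq V\times g^{\circ n^2}(W)$ (your inductive step, which matches the paper's \eqref{eq:incl}) one gets that $(\Skew^{\circ n^2})$ is a normal family on $U$; every subsequential limit is a holomorphic map into $\overline V\times\{0\}$, a set with empty interior in $\C^2$, hence constant; Proposition \ref{key} then shows the compact set $S$ of cluster constants satisfies $\L_f(S)=S$, and since $\L_f^{\circ k}\to\xi$ uniformly on $V$ this forces $S=\{\xi\}$. You instead run a \emph{quantitative} perturbed-contraction argument: $\zeta_{n+1}=\L_f(\zeta_n)+\o(1)$, a uniform Poincar\'e contraction factor $\lambda<1$ coming from $\L_f(V)\Subset V$, and the elementary recursion $a_{n+1}\leq\lambda a_n+\o(1)\Rightarrow a_n\to 0$. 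Both are valid; your version avoids the (slick but easy to miss) ``image in a thin set $\Rightarrow$ constant'' step and the total-invariance-of-the-cluster-set argument, at the price of having to set up the hyperbolic-metric comparison carefully. Two small points worth tightening if you were to write this up: (i) the uniform $\lambda<1$ is not Schwarz--Pick alone but Schwarz--Pick combined with the strict inclusion of the hyperbolic metrics of two concentric disks $V_1\Subset V$ with $\L_f(V)\subset V_1$ (for concentric disks $\rho_V/\rho_{V_1}\leq r_1/r<1$ uniformly, and the relevant geodesics stay in $V_1$), and (ii) one should record that $d_V(\zeta_n,\xi)$ is finite and uniformly bounded for $n>n_0$ over compacts of $U$, which follows since $\zeta_{n_0+1}$ already lies in the fixed compact $\pr\bigl(\Skew^{\circ(2n_0+1)}(V\times g^{\circ n_0^2}(W))\bigr)\Subset V$. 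With these in place the argument is complete and uniform on compacts of $U$, as required.
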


\begin{proof}
An easy induction shows that for every integer $n\geq n_0$,
\begin{equation}\label{eq:incl}
 \Skew^{\circ n^2} (U)\subseteq V\times
g^{\circ n^2}(W).
\end{equation}
Indeed this holds by assumption for $n=n_0$.  Now if the inclusion is true for some $n\geq n_0$, then
\begin{align*}\pr  \circ \Skew^{\circ (n+1)^2} (U)& = \pr \circ \Skew^{\circ (2n+1)}
\lrpar{ \Skew^{\circ n^2} (U) }\\& \subset \pr \circ \Skew^{\circ (2n+1)} \lrpar{V\times g^{\circ n^2}(W)}\subset V,
\end{align*}
from which \eqref{eq:incl} follows.

From this we get that the sequence $(\Skew^{\circ n^2})_{n\geq 0}$ is uniformly bounded, hence normal, on $U$.   Also,
any cluster value of this sequence of maps is constant and
 of the form $(z,0)$ for some $z\in V$. In addition, $(z,0)$ is a limit value (associated to a subsequence $( {n_k})$) if and only if $\bigl(\L_f(z),0\bigr)$ is a limit value (associated to the subsequence $( {1+n_k})$). We infer that  the set of cluster limits is totally invariant under
 $\L_f:V\to V$, therefore  it must be  reduced to the attracting fixed point $\xi$ of~$\L_f$, and we are done.
\end{proof}

\begin{corollary}
The domain $U$ is contained in the Fatou set of $\Skew$.
\end{corollary}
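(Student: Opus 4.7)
The strategy is to derive normality of the full sequence $\{\Skew^{\circ N}\}_{N\ge 0}$ on $U$ from Montel's theorem. Since $\Skew$ extends holomorphically to $\P^2(\C)$ and the complement of the line at infinity is $\C^2$, it suffices to show that for every compact $K\Subset U$ the union $\bigcup_{N\ge 0}\Skew^{\circ N}(K)$ is a bounded subset of $\C^2$: uniform boundedness of a holomorphic family of maps into an affine chart of $\P^2$ gives (componentwise) normality into $\P^2$.

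Fix such a $K$. For $N\ge n_0^2$ I would write $N=n^2+k$ uniquely with $n=\lfloor\sqrt N\rfloor$ and $0\le k\le 2n$. By the inclusion \eqref{eq:incl} established in the proof of Lemma \ref{limit}, $\Skew^{\circ n^2}(K)\subset V\times g^{\circ n^2}(W)$, whence
\[
\Skew^{\circ N}(K)\;\subset\; \Skew^{\circ k}\bigl(V\times g^{\circ n^2}(W)\bigr).
\]
The second coordinate is immediately under control: the projection onto the second factor equals $g^{\circ n^2+k}(w)$ for some $w\in W\Subset \bpg$, which stays inside the bounded set $\bpg$ (the parabolic basin of the polynomial $g$ is contained in its filled Julia set, hence bounded). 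The whole issue is therefore to bound the first coordinate uniformly in $n$ and in $k\in[0,2n]$.

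This first-coordinate bound is precisely of the flavor produced by the analysis leading to Proposition~\ref{key}, and it is the main obstacle. The geometric picture, visible in Figure~\ref{fig:key}, is that during the orbit segment of length $2n+1$ starting from $V\times g^{\circ n^2}(W)$, the first coordinate first escapes the parabolic fixed point $0$ along the repelling Fatou direction, traverses a transit region, and returns along the attracting Fatou direction to a neighborhood of $\L_f(V)$; crucially, all intermediate points remain inside a fixed compact subset of $\bpf$. Extracting this uniform confinement of intermediate iterates from the approximate Fatou coordinate formalism of Section~\ref{sec:approxcoord} is a by-product of the proof of Proposition~\ref{key}; granting it, Montel's theorem yields normality of $\{\Skew^{\circ N}\}$ on every $K\Subset U$, hence on $U$, completing the proof of the corollary.
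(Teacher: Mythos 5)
Your outline correctly reduces the problem to showing that for every compact $K\Subset U$ the union $\bigcup_{N\geq 0}\Skew^{\circ N}(K)$ is bounded, and the bound on the second coordinate is fine. But the argument then stops exactly at the point where the real work is: you assert that boundedness of the intermediate iterates of the first coordinate (for $k\in[0,2n]$) is ``a by-product of the proof of Proposition~\ref{key}'' and propose to ``grant it.'' Proposition~\ref{key} as stated only controls the endpoint $\Skew^{\circ 2n+1}(z,g^{\circ n^2}(w))$; extracting a uniform-in-$n$ confinement of \emph{all} intermediate iterates would require re-examining the estimates of Sections~\ref{sec:approxcoord} and \ref{sec_key} (the entering/passing/leaving propositions) and verifying that the domains $V_{w_m}$, $\Pa$, $\Pr$ are uniformly bounded along the whole orbit segment. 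That is a nontrivial extraction, not a free corollary, and as written your proposal contains a genuine gap. (Also a small mix-up: along the transit the orbit first enters through the \emph{attracting} petal and exits through the \emph{repelling} one, not the other way around.)

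The paper avoids revisiting the technical estimates entirely by a much more elementary dichotomy. Since $\overline W$ is a compact subset of $\bpg$, one can choose $R>0$ so large that for any $w\in W$ and $|z|>R$, the orbit of $(z,w)$ under $\Skew$ escapes locally uniformly to infinity (the perturbation term $\frac{\pi^2}{4}w$ is uniformly bounded, so for $|z|$ large the polynomial dynamics of $f$ dominates). Consequently, if the full sequence $(\Skew^{\circ m})_{m\geq 0}$ failed to be locally bounded on $U$, some intermediate iterate would have first coordinate of modulus $>R$, and then the entire subsequent orbit would diverge to infinity --- contradicting the local boundedness of the subsequence $(\Skew^{\circ n^2})_{n\geq n_0}$ already established in Lemma~\ref{limit}. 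This gives normality of the whole sequence without any parabolic-implosion estimates. If you want to salvage your route, the missing step is precisely this escape lemma; it is both simpler and more robust than chasing uniform bounds through the eggbeater analysis.
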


\begin{proof}
It is well-known in our context that the sequence
  $(\Skew^{\circ m})_{m\geq 0}$ is    locally bounded  on $U$
  if and only if there exists a subsequence  $(m_k)$ such that $(\Skew^{\circ m_k}\rest{U})_{k\geq 0}$  has the same property.
 Indeed since $ \overline {W}$ is compact, there exists $R>0$ such that if $\abs{z}> R$, then for every  $w\in W$, $(z, w)$
 escapes locally uniformly to infinity under iteration. The result follows.
\end{proof}

\begin{proof}[Proof of the Main Theorem]
Let $\Omega$ be the component of the Fatou set $\F_\Skew$ containing $U$. According to Lemma \ref{limit}, for any integer $i\geq 0$,  the
sequence $(\Skew^{\circ (n^2+i)})_{n\geq 0}$ converges locally uniformly to $\Skew^{\circ i}(\xi,0)=\bigl(f^{\circ i}(\xi),0)$ on $U$, hence
 on $\Omega$. Therefore,
 the sequence $(\Skew^{\circ n^2})_{n\geq 0}$ converges locally uniformly to $\bigl(f^{\circ i}(\xi),0)$ on $\Skew^{\circ i}(\Omega)$.

As a consequence,  if $i,j$ are nonnegative integers such that $\Skew^{\circ i}(\Omega)= \Skew^{\circ j}(\Omega)$, then $f^{\circ i}(\xi)= f^{\circ j}(\xi)$, from which we deduce that
 $i=j$. Indeed, $\xi$ belongs to the parabolic basin $\bpf$, and so, it is not (pre)periodic under iteration of $f$. This shows that
$\Omega$ is not (pre)periodic under iteration of $\Skew$: it is a wandering Fatou component for $\Skew$.
%
%
%
%
\end{proof}

\section{Approximate Fatou coordinates\label{sec:approxcoord}}

In this section we study the phenomenon of {\em persistence of Fatou coordinates} in our non-autonomous setting.  As in the Main Theorem,
we consider polynomial mappings $f, g:\C\cv\C$ of the form
$$ f(z)  = z+z^2+az^3+ \O(z^4) \and g(w) = w-w^2+ \O(w^3).$$ We put
$$f_w(z) = f(z) + \frac{\pi^2}{4}w.$$  We want to show that there exist changes of variables $\varphi_w$ and $\varphi_{g(w)}$
which are in a sense approximations to the Fatou coordinates of $f$ in appropriate domains, and such that
$\varphi_{g(w)}\circ f_w\circ \varphi_w^{-1}$ is close to a translation. These change of variables are normalized so that
 $\varphi_{g(w)}\circ f_w\circ \varphi_w^{-1}$ is roughly  defined   in a vertical strip of width~1
 and the translation vector is $\sqrt{w}/2$.   They will be given by explicit formulas: in this respect our approach is similar to that of \cite{bsu}.
 Precise error estimates are required in order to ultimately prove Proposition \ref{key}
 in the next section.

\subsection{Notation}
The following notation will be used throughout this section and the next one (see also Figure \ref{notation}).

First, choose $R>0$ large enough so that $F:Z\mapsto -1/f(-1/Z)$ is univalent on $\C\setminus \overline D(0,R)$,
\[\sup_{|Z|>R} \bigl|F(Z)-Z-1\bigr|<\frac{1}{10}\and \sup_{|Z|>R} \bigl|F'(Z)-1\bigr|<\frac{1}{10}.\]
Denote by $\H_R$ the right half-plane $\H_R:=\bigl\{Z\in \C~;~\Re(Z)>R\bigr\}$ and by $-\H_R$ the left half-plane
$-\H_R:=\bigl\{Z\in \C~;~\Re(Z)<-R\bigr\}$.
Define the  {\em attracting petal} $\Pa$  by
\[\Pa=\left\{z\in \C~;~\Re\left(-\frac{1}{z}\right)>R\right\}\]
Then,
\begin{itemize}
\item the restriction of
the attracting Fatou coordinate $\phi_f$ to the attracting petal $\Pa$
is univalent, and
\item the restriction of the repelling Fatou parameterization $\psi_f$ to the  left half-plane
$-\H_R$ is univalent.
\end{itemize}
We  use the notation $\psi_f^{-1}$ only for the inverse branch $\psi_f^{-1}:\Pr\to -\H_R$  of $\psi_f$ on the
{\rm repelling petal} $ \Pr:=\psi_f(-\H_R)$.
Recall that
 \[\phi_f\circ f = T_1\circ \phi_f, \quad f\circ \psi_f = \psi_f\circ T_1,\]
 \[ \phi_f(z)\sim -\frac{1}{z}\as\Re\left(-\frac{1}{z}\right)\to +\infty
 \and \psi_f(Z)\sim   -\frac{1}{Z}\as\Re(Z)\to -\infty.\]

Next, for $r>0$ we set $ \Pg:=D(r,r)$ and fix
  $r>0$ small enough   that
\[\Pg\subset {\cal B}_g\and g(\Pg)\subset \Pg.\]
For the remainder of the article, we assume that $w\in \Pg$, whence $g^{\circ m}(w)\to 0$ as $m\to +\infty$. The notation $\sqrt w$ stands for the branch of the square root on $B_r$ that has   positive real part.

Finally, we fix a real number
\begin{equation} \label{eq:alpha}
\frac12< \alpha <\frac23.
\end{equation}
The relevance of   this range of values for $\alpha$  will be made clearer during the proof.
For $w\in \Pg$,  we let
\[r_w:=|w|^{(1-\alpha)/2}\underset{w\to 0}\longrightarrow 0 \and R_w:=|w|^{-\alpha/2}\underset{w\to 0}\longrightarrow +\infty.\]
Define  $\mR_w$  to be the rectangle
\begin{equation}\label{eq:Rw}
\mR_w:=\left\{\mz\in \C~;~\frac{r_w}{10}<\Re(\mz)<1-\frac{r_w}{10}\text{ and } -\frac{1}{2}<\Im(\mz)<
\frac{1}{2}\right\},
\end{equation}
 and let $D_w^\att$ and $D_w^\rep$ be the disks
\[D_w^\att:=D\left(R_w,\frac{R_w}{10}\right)\and
D_w^\rep:=D\left(-R_w,\frac{R_w}{10}\right).\]

In this section, the notation   $\O(h)$  means a quantity that is defined for $w\in \Pg$ close enough to zero and is
bounded by $C\cdot h$ for a constant $C$ which does not depend on $w$.
As usual, $\o(h)$ means a quantity such that $\o(h)/h$ converges to zero as $w\cv0$.

\subsection{Properties of approximate Fatou coordinates}\label{subs:properties}

Our purpose in this paragraph is to state precisely the properties of the approximate Fatou coordinates, in an axiomatic fashion.
The actual definitions as well as the proofs will be detailed afterwards.

 The  claim is that there exists  a family of domains $(V_w)$ and
univalent maps $(\varphi_w:V_w\to \C)$ parameterized by $w\in \Pg$, satisfying the following three properties.

\begin{property}[Comparison with the attracting Fatou coordinate]\label{compatt}
As $w\to 0$ in $\Pg$,
\[D_w^\att\subset \phi_f\left(V_w\cap \Pa\right)\and \sup_{Z\in D_w^\att}\left|\frac{2}{\sqrt w} \cdot  \varphi_w\circ \phi_f^{-1}(Z) -Z\right| \longrightarrow 0.\]
\end{property}

This means
that $\frac{2}{\sqrt{w}}   \varphi_w $ approximates the Fatou coordinate on the attracting side.
A similar result holds on the repelling side.

\begin{property}[Comparison with the repelling Fatou coordinate]\label{comprep}
As $w\to 0$ in $\Pg$, we have that
\[1+\frac{\sqrt w}{2}\cdot D_w^\rep\subset \varphi_w\left(V_w\cap \Pr\right)\]
and
\[\sup_{Z\in D_w^\rep}\left|\psi_f^{-1}\circ \varphi_w^{-1}\left(1+\frac{\sqrt w}{2}   Z\right)-Z\right| \longrightarrow 0.\]
\end{property}

Finally, the last property asserts that $\frac{2}{\sqrt{w}} \varphi_w $ is almost a Fatou coordinate.

\begin{property}[Approximate conjugacy to a translation]\label{approx}
As $w\to 0$ in $\Pg$, we have that
\[\mR_w\subset \varphi_w(V_w), \quad f_{w}\circ \varphi_w^{-1}(\mR_w)\subset V_{g(w)}\]
and
\[\sup_{\mz\in \mR_w} \left|\varphi_{g(w)} \circ f_{w}\circ \varphi_w^{-1}(\mz) - \mz - \frac{\sqrt w}{2} \right|= \o(w).\]
\end{property}

\medskip

To improve the readability of the proof, which involves several changes of coordinates,
we adopt the following typographical  convention:
\begin{itemize}
\item
block letters (like $z$, $V_w$, \ldots)
are used for objects which are thought of as living
in the initial coordinates;
\item script like letters (like $\mz$, $\mR_w$, \ldots) are used for objects  living  in approximate Fatou coordinates;
\item  the coordinate $Z$ is used for  the actual Fatou coordinate.
\end{itemize}
This gives rise to expressions like $\phi_f(z)  = Z$ or $\varphi_w(z) = \mz$.

\begin{figure}[htbp]
\centering
\def\svgwidth{14cm}
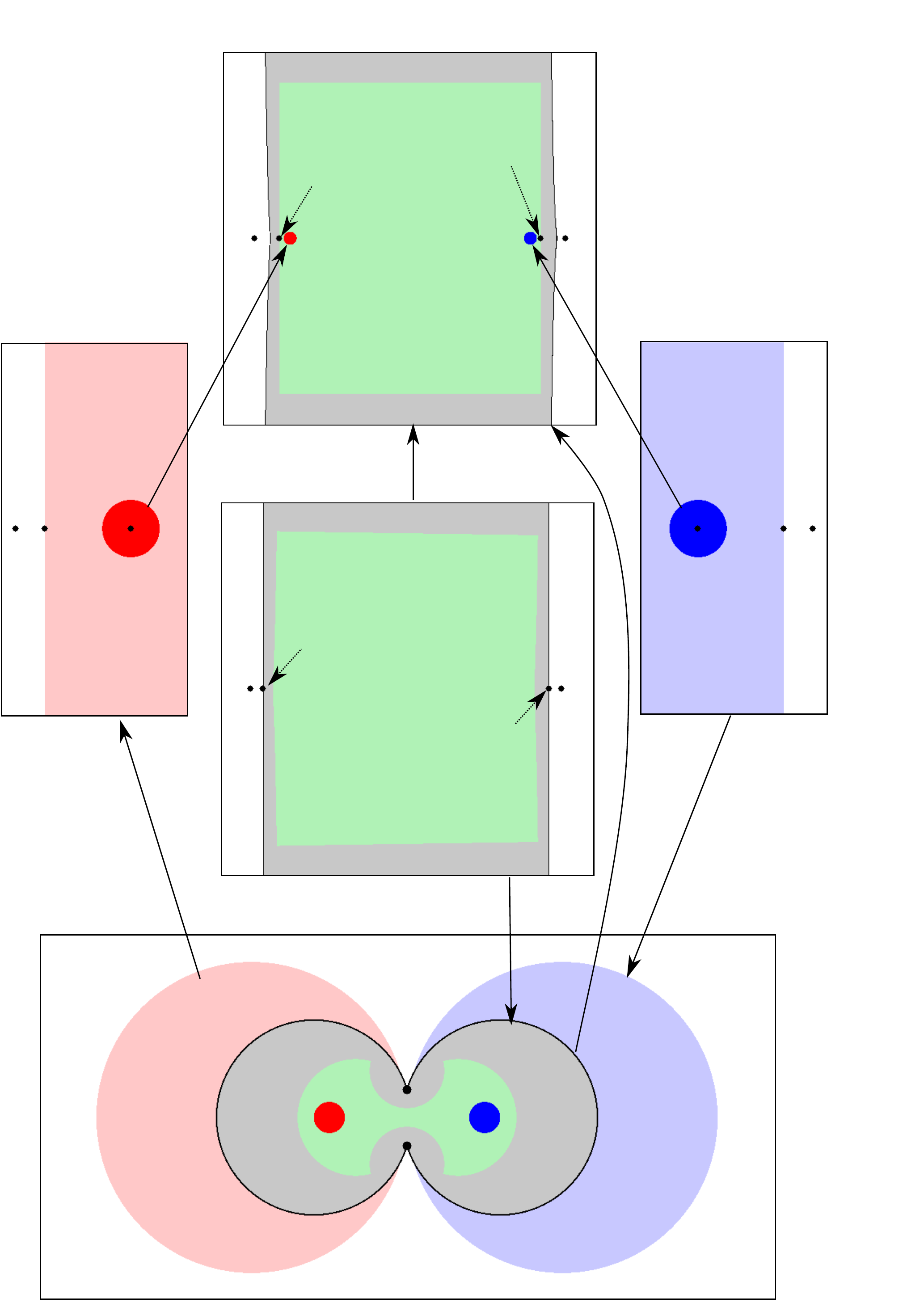
\caption{{\small Changes of coordinates used in the proof. \label{notation}}}
\end{figure}

\subsection{Definition of the approximate Fatou coordinates}

The skew-product $\Skew$ fixes the origin and  leaves the line $\{w=0\}$   invariant.
We may wonder whether there are other {\it parabolic invariant curves} near the origin, in the
sense of   \'Ecalle \cite{Ec} and   Hakim \cite{h}.

\begin{question}
Does there exist  holomorphic maps
 $\xi^\pm : \Pg\to \C$ such that $\xi^\pm(w)\to 0$ as $w\to 0$ and such that  $f_w\circ \xi^\pm(w)= \xi^\pm\circ g(w)$ for $w\in \Pg$?
\end{question}

 We shall content ourselves with the following weaker result.

\begin{lemma}\label{lem:zetapm}
Let $\zeta^\pm:\Pg\to \C$ be defined by
\[\zeta^\pm(w) = \pm c_1\sqrt{w} + c_2 w\with c_1 = \frac{\pi \i}{2}\and c_2 =  \frac{a\pi^2}{8}-\frac{1}{4}.\]
Then,
\[f_{w}\circ \zeta^\pm (w) = \zeta^\pm \circ g(w) + \O(w^2).\]
\end{lemma}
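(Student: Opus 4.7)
The plan is to verify the identity by a direct Taylor expansion of both sides in the variable $\sqrt{w}$, keeping track of terms up to and including order $w^{3/2}$. Since $\zeta^\pm$ is a polynomial in $\sqrt{w}$, $w$ and since $g(w)=w-w^2+\O(w^3)$, both $f_w\circ\zeta^\pm(w)$ and $\zeta^\pm\circ g(w)$ are holomorphic functions of $\sqrt{w}$ in a suitable domain, so matching their series up to order $w^{3/2}$ is enough.

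First I would expand the left-hand side. Write $\zeta^\pm(w)=\pm c_1\sqrt w+c_2 w$ and compute
\[
\zeta^\pm(w)^2 = c_1^2 w \pm 2c_1c_2 w^{3/2}+\O(w^2),\quad \zeta^\pm(w)^3=\pm c_1^3 w^{3/2}+\O(w^2),\quad \zeta^\pm(w)^4=\O(w^2).
\]
Plugging these into $f_w(z)=z+z^2+az^3+\O(z^4)+\tfrac{\pi^2}{4}w$ yields
\[
f_w\bigl(\zeta^\pm(w)\bigr)=\pm c_1\sqrt w+\Bigl(c_2+c_1^2+\tfrac{\pi^2}{4}\Bigr)w \pm\bigl(2c_1 c_2+a c_1^3\bigr)w^{3/2}+\O(w^2).
\]
The choice $c_1=\tfrac{\pi\i}{2}$ is precisely what is needed so that $c_1^2+\tfrac{\pi^2}{4}=0$, which kills the $w$-term coming from $z^2$ against the perturbation $\tfrac{\pi^2}{4}w$.

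Next I would expand the right-hand side. From $g(w)=w-w^2+\O(w^3)$ and the chosen branch of square root (with positive real part, which is continuous on $B_r$ and matches on $g(B_r)\subset B_r$) one gets
\[
\sqrt{g(w)}=\sqrt w\cdot\bigl(1-\tfrac{w}{2}+\O(w^2)\bigr)=\sqrt w-\tfrac{1}{2}w^{3/2}+\O(w^{5/2}),
\]
so that
\[
\zeta^\pm\bigl(g(w)\bigr)=\pm c_1\sqrt w+c_2 w\mp\tfrac{c_1}{2}w^{3/2}+\O(w^2).
\]

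Finally, subtracting the two expansions, the $\sqrt w$ and $w$ terms cancel and I am left with
\[
f_w\bigl(\zeta^\pm(w)\bigr)-\zeta^\pm\bigl(g(w)\bigr)=\pm c_1\Bigl(2c_2+a c_1^2+\tfrac{1}{2}\Bigr)w^{3/2}+\O(w^2).
\]
Inserting $c_1^2=-\tfrac{\pi^2}{4}$, the bracket equals $2c_2-\tfrac{a\pi^2}{4}+\tfrac{1}{2}$, which vanishes exactly when $c_2=\tfrac{a\pi^2}{8}-\tfrac{1}{4}$, the prescribed value. The conclusion $f_w\circ\zeta^\pm(w)=\zeta^\pm\circ g(w)+\O(w^2)$ follows. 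There is no real obstacle here; the whole lemma is a matter of recording that $(c_1,c_2)$ are determined by annihilating the $w$- and $w^{3/2}$-coefficients of the conjugacy equation, i.e.\ they are the first two coefficients of a formal parabolic invariant curve for $P$ through the origin.
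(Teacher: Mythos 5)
Your proof is correct and follows essentially the same route as the paper: a direct Taylor expansion of both sides in $\sqrt{w}$ through order $w^{3/2}$, with $c_1$ chosen to annihilate the $w$-coefficient and $c_2$ chosen to annihilate the $w^{3/2}$-coefficient. The brief remark about the square-root branch and the interpretation of $(c_1,c_2)$ as the leading coefficients of a formal parabolic invariant curve are accurate and consistent with the paper's surrounding discussion.
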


\begin{proof}
An elementary computation shows that
$$f_w\circ \zeta^\pm(w) = \ds \pm c_1\sqrt w + \left(c_2+c_1^2+\frac{\pi^2}{4}\right) w \pm (ac_1^3 + 2c_1c_2) w\sqrt w +\O(w^2).$$
On the other hand,
\[ \sqrt{g(w)} = \ds \sqrt w-\frac{1}{2}w\sqrt w + \O(w^2), \]
so
\[\zeta\circ g(w) = \ds \pm c_1\sqrt w + c_2 w \mp \frac{c_1}{2}w\sqrt w+ \O(w^2).\]
Thus the result follows from our choice of $c_1$ and $c_2$  since
\[c_2+c_1^2+\frac{\pi^2}{4} = c_2\and ac_1^3 + 2c_1c_2= -\frac{c_1}{2}.\qedhere\]
\end{proof}

Let $\psi_w:\C\to \P^1(\C)\setminus\{\zeta^+(w),\zeta^-(w)\}$ be the universal cover defined by
\begin{equation}
\label{eq:psiw}
\psi_w(\mz) := \frac{\zeta^-(w)\cdot \e^{2\pi \i \mz} - \zeta^+(w)}{\e^{2\pi \i \mz}-1} = \i c_1\sqrt w \cot(\pi \mz) + c_2w.
\end{equation}
This universal cover restricts to a univalent map on the vertical strip
\[\mS_0:=\bigl\{\mz\in \C~;~0<\Re(\mz)<1 \bigr\} , \] with
inverse given by
\[\psi_w^{-1}(z)= \frac{1}{2\pi \i} \log \left(\frac{z-\zeta^+(w)}{z-\zeta^-(w)}\right).\]
In this formula, $\log(\cdot)$ stands for  the branch of the logarithm   defined on $\C\setminus \R^+$ and such that
$\log(-1) = \pi\i$.

For $w\in \Pg$, let $\chi_w:\mS_0\to \C$ be the map defined by
\begin{equation}
\label{eq:defchiw}
\chi_w(\mz):= \mz- \frac{\sqrt{w}  (1-a)}{2}\log \left(\frac{2\sin(\pi \mz)}{\pi \sqrt w}\right),
\end{equation}
where in this formula
the branch of $\log$ is defined on $ \frac{1}{\sqrt w} (\C\setminus\R^-)$ and vanishes at~$1$.

Now introduce
\[\mS_w:=\bigl\{\mz\in \C~;~|w|^{1/4}<\Re(\mz)<1-|w|^{1/4}\bigr\}\]
and its  image under $\psi_w$:
\[V_w:=\psi_w(\mS_w)\subset \C.\]

\begin{lemma}\label{lem:chiw}
If $w\in \Pg$ is close enough to $0$, then $\chi_w:\mS_w\to \C$ is univalent. In addition, $\chi_w$ is close to the identity in the following sense:
$$\text{if }\mz\in \mS_w \cap \set{\mz, \ \Im(\mz)<1} \text{ then }
 \chi_w (\mz) = \mz + \O \lrpar{\abs{w}^{1/2}\log \abs{w}} = \mz +  \o(r_w).$$
\end{lemma}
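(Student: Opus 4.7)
The plan is to show that $\chi_w$ is a small (holomorphic) perturbation of the identity on the convex strip $\mS_w$, so that univalence follows from the Noshiro--Warschawski type argument, and the size estimate follows from bounding $\log\bigl(2\sin(\pi\mz)/(\pi\sqrt w)\bigr)$. Note first that $\mS_w\subset \mS_0$, and that for $\mz\in \mS_0$ one has $\Re\sin(\pi\mz) = \sin(\pi\Re\mz)\cosh(\pi\Im\mz)>0$, so $2\sin(\pi\mz)/\pi \in \C\setminus \R^-$, which together with the fact that $\sqrt w$ has positive real part ensures that the argument of $\log$ lies in the prescribed domain of that branch, and that its imaginary part stays bounded (by $\pi$) uniformly in $\mz\in\mS_w$.

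The first technical step is to bound $\cot(\pi\mz)$ on $\mS_w$. Using the identity $|\sin(\pi\mz)|^2 = \sin^2(\pi\Re\mz) + \sinh^2(\pi\Im\mz)$ together with $\sin(\pi\Re\mz)\geq \sin(\pi|w|^{1/4})\geq \frac{\pi}{2}|w|^{1/4}$ for $w$ small, I obtain the uniform lower bound $|\sin(\pi\mz)|\geq c\, |w|^{1/4}$ on $\mS_w$. Since $|\cos(\pi\mz)|^2 = \cos^2(\pi\Re\mz) + \sinh^2(\pi\Im\mz)\leq 1 + \sinh^2(\pi\Im\mz)$, the quotient $|\cot(\pi\mz)|^2$ is uniformly bounded by $1 + C|w|^{-1/2}$, hence
\[
\sup_{\mz\in\mS_w}|\cot(\pi\mz)|\leq C\,|w|^{-1/4}.
\]
Differentiating the defining formula for $\chi_w$ then yields
\[
\chi_w'(\mz) = 1 - \frac{\pi\sqrt w\,(1-a)}{2}\cot(\pi\mz),
\]
so that $|\chi_w'(\mz) - 1| \leq C\,|w|^{1/4}\to 0$ uniformly on $\mS_w$.

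For univalence, I will use that $\mS_w$ is convex: for $w$ small enough, $|\chi_w'-1|\leq 1/2$ on $\mS_w$, and given $\mz_1,\mz_2\in\mS_w$ with $\chi_w(\mz_1)=\chi_w(\mz_2)$, integrating $\chi_w'$ along the segment $[\mz_1,\mz_2]\subset\mS_w$ yields
\[
0 = (\mz_2-\mz_1) + \int_{\mz_1}^{\mz_2}\bigl(\chi_w'(\mz)-1\bigr)\,d\mz,
\]
forcing $|\mz_2-\mz_1|\leq \tfrac12|\mz_2-\mz_1|$ and hence $\mz_1=\mz_2$.

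The size estimate is then a direct evaluation. For $\mz\in\mS_w$ with $\Im\mz<1$, the previous bounds give $c_1|w|^{-1/4}\leq |2\sin(\pi\mz)/(\pi\sqrt w)|\leq c_2|w|^{-1/2}$, so the modulus of its logarithm is $O(|\log|w||)$; combined with the uniform bound on the imaginary part of the log, this yields
\[
\left|\log\!\left(\tfrac{2\sin(\pi\mz)}{\pi\sqrt w}\right)\right|= O\!\left(|\log|w||\right),
\]
and therefore $\chi_w(\mz)-\mz = O\bigl(|w|^{1/2}\log|w|\bigr)$. Finally, since $r_w=|w|^{(1-\alpha)/2}$ with $\alpha>1/2>0$, the ratio $|w|^{1/2}\log|w|/r_w = |w|^{\alpha/2}\log|w|$ tends to $0$, giving the claimed $\o(r_w)$ estimate. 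The only mildly delicate point is the uniform lower bound on $|\sin(\pi\mz)|$ at the lateral boundary of $\mS_w$; the rest consists of routine book-keeping with the explicit formula for $\chi_w$.
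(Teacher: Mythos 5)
Your proof is correct and follows essentially the same route as the paper's: compute $\chi_w'$, bound $\cot(\pi\mz)$ via a lower bound on $|\sin(\pi\mz)|$ over $\mS_w$, deduce $|\chi_w' - 1| = \O(|w|^{1/4})$, use the convexity/integral argument for univalence, and control the logarithm to get the size estimate. You supply some details the paper leaves to the reader (the explicit identity $|\sin(\pi\mz)|^2 = \sin^2(\pi\Re\mz)+\sinh^2(\pi\Im\mz)$, the branch-of-log check, and the computation $|w|^{1/2}\log|w|/r_w = |w|^{\alpha/2}\log|w| \to 0$); note only that the upper bound $|2\sin(\pi\mz)/(\pi\sqrt w)|\leq c_2|w|^{-1/2}$ tacitly uses $|\Im\mz|$ bounded (not merely $\Im\mz<1$), a small imprecision inherited from the lemma's statement itself and present in the paper's own argument.
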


\begin{proof}
Observe that
\[\chi_w'(\mz) = 1- \frac{\sqrt w  (1-a)\pi }{2}\cot(\pi \mz)\and \sup_{\mz\in \mS_w} \bigl|\cot (\pi \mz)\bigr| \in \O\left(\abs{w}^{-1/4}\right).\]
As a consequence,
\[\sup_{\mz\in \mS_w}\bigl|\chi_w'(\mz)-1\bigr|\in \O\lrpar{\abs{w}^{1/4}}.\]
If $\chi_w(\mz_1)=\chi_w(\mz_2)$, then
\[|\mz_2-\mz_1| = \bigl|(\chi_w(\mz_1)-\mz_1)-(\chi_w(\mz_2)-\mz_2)\bigr| \leq \sup_{[\mz_1,\mz_2]} \bigl|\chi_w'(\mz)-1\bigr| \cdot |\mz_2-\mz_1| .\]
When $w$ is sufficiently close to $0$, the supremum is smaller than $1$ and we necessarily have $\mz_1=\mz_2$.

The second assertion of the lemma follows directly from the definition of $\chi_w$ and the fact that on $\mS_w \cap \set{ \Im(\mz)<1} $,
$\abs{\sin(\pi \mz)}\geq c \abs{w}^{1/4}$ for some positive constant~$c$.
\end{proof}

From now on, we assume that $w$ is sufficiently close to $0$ so that $\chi_w:\mS_w\to \C$ is univalent.

\begin{definition}
The approximate Fatou coordinates $\varphi_w$ are the maps
\[\varphi_w := \chi_w\circ \psi_w^{-1} : V_w\to \C \with w\in B_r.\]
\end{definition}

We need to prove that these approximate Fatou coordinates satisfy Properties \ref{compatt},  \ref{comprep} and \ref{approx}.

\subsection{Comparison with the attracting Fatou coordinate}\label{subs:comparingatt}

In this paragraph, we prove that  the approximate Fatou coordinate $\varphi_{w}$ satisfies Property \ref{compatt}, namely
when $w\to0$ in $\Pg$,    $D_w^\att\subset \phi_f(V_w)$
and
\begin{equation}\label{eq:supdatt}
\sup_{Z\in D_w^\att}\left|\frac{2}{\sqrt w}\cdot \varphi_w\circ \phi_f^{-1}(Z) -Z\right| \longrightarrow 0.
\end{equation}
Recall that $R_w=|w|^{-\alpha/2}$,  $r_w=|w|^{1/2} R_w=|w|^{(1-\alpha)/2}$, and $D_w^\att = D(R_w, R_w/10)$.

\medskip
\begin{proof}[Proof of Property \ref{compatt}]~\\
\noindent{\bf Step 1.} Let us first prove that $D_w^\att\subset \phi_f(V_w)$.
Note that $R_w\to +\infty$ as $w\to 0$, hence $D_w^\att\subset \phi_f(\Pa)$ for $w$ close  to $0$.
If $z\in \phi_f^{-1}(D_w^\att)$, then
\[\phi_f(z) = - \frac{1}{z}+\o\left(\frac{1}{z}\right)=\O\left(R_w\right).\]
In addition,
\[ \zeta^\pm(w) = \pm \frac{\pi\i}{2}\sqrt{w}  \bigl(1+\o(1)\bigr)\and \frac{\zeta^\pm(w)}{z}=\O\left(r_w\right).\]
In particular,
\begin{align*}
\log\left(\frac{z-\zeta^+(w)}{z-\zeta^-(w)}\right)
&=\log\left(1-\frac{\zeta^+(w)}{z}\right)-\log\left(1-\frac{\zeta^-(w)}{z}\right)\\
&=-\frac{\zeta^+(w)}{z} +\frac{\zeta^-(w)}{z}+ \O\left(r_w^2\right) =-\pi\i \frac{\sqrt{w}}{z}+\O\left(r_w^2\right).
\end{align*}
Since $\alpha>1/2$, we have that $r_w^2 = |w|^{1-\alpha} = \o\left(|w|^{1/2}\right)$, and it follows that
\begin{equation}\label{eqZ}
\mz:=\frac{1}{2\pi \i}\log \left(\frac{z-\zeta^+(w)}{z-\zeta^-(w)}\right) =- \frac{\sqrt{w}}{2z}+\O\left(r_w^2\right) = - \frac{\sqrt{w}}{2z}+ \o\left(|w|^{1/2}\right).
\end{equation}
So, the real part of $\mz$ is comparable to $r_w$ and since $|w|^{1/4} = \o\left(r_w\right)$, we get that
 $\mz\in \mS_w$, whence $z=\psi_w(\mz)\in V_w$ for $w\in \Pg$ close enough to $0$.

\medskip\noindent{\bf Step 2.} We now establish \eqref{eq:supdatt}. Note that
\[\sup_{Z\in D_w^\att}\left|\frac{2}{\sqrt w} \cdot \varphi_w\circ \phi_f^{-1}(Z) -Z\right|  =
\sup_{z\in \phi_f^{-1}(D_w^\att)}\left|\frac{2}{\sqrt w} \cdot \varphi_w(z)- \phi_f(z)\right|.\]
Observe first that  when $w$ tends to $0$, the domain $\phi_f^{-1}(D_w^\att)$ also tends to $0$. So, if $z\in \phi_f^{-1}(D_w^\att)$, then
\[\phi_f(z) =- \frac{1}{z}-(1-a)\log\left(-\frac{1}{z}\right)+\o(1).\]
On the other hand,
\[\frac{2\sin(\pi \mz)}{\pi\sqrt{w}} =\frac{2}{\sqrt w} \bigl(\mz+\o(\mz)\bigr) =  -\frac{1}{z}+\o\left(\frac{1}{z} \right).\]
Thus, together with the estimate  \eqref{eqZ} we infer that
\begin{equation}\label{phiw}
\begin{aligned}
\varphi_{w}(z) =  \chi_{w}(\mz)
&= \mz-\frac{\sqrt{w}\cdot (1-a)}{2}\log\left(\frac{2\sin(\pi \mz)}{\pi\sqrt{w}}\right)\\
& = - \frac{\sqrt{w}}{2z}+\o\left(|w|^{1/2}\right)-\frac{\sqrt{w}\cdot (1-a)}{2}\log\left(-\frac{1}{z}+\o\left(\frac{1}{z} \right)\right)\\
&= \frac{\sqrt{w}}{2}  \left(-\frac{1}{z}-(1-a) \log\left(-\frac{1}{z}\right)+\o\left({1}\right)\right)\\
&=  \frac{\sqrt{w}}{2}  \bigl(\phi_f(z)+\o(1)\bigr),
\end{aligned}
\end{equation}
which  completes the proof.
\end{proof}

\subsection{Comparison with the repelling Fatou coordinate}

In this paragraph, we deal with    Property \ref{comprep}, that is, we wish to prove   that
  as $w\to 0$ in $\Pg$,
\[\mD'_w:=1+\frac{\sqrt w}{2}\cdot D_w^\rep\subset \varphi_w\left(V_w\cap \Pr\right)\]
and
\[\sup_{Z\in D_w^\rep}\left|\psi_f^{-1}\circ \varphi_w^{-1}\left(1+\frac{\sqrt w}{2}   Z\right)-Z\right| \longrightarrow 0.\]
The proof is rather similar to that of Property \ref{compatt}.

\medskip
\begin{proof}[Proof of Property \ref{comprep}]~\\
\noindent{\bf Step 1.}
Let us first prove that for $w\in\Pg$ close enough to $0$, the disk $\mD'_w$ is contained in  $\varphi_w(V_w)$. Note that
 with  $r_w = |w|^{1/2}  R_w = |w|^{(1-\alpha)/2}$ as before, we have
\[\mD'_w= D\left(1-\frac{\sqrt w  R_w}{2},\frac{r_w}{20}\right).\]
Since $\alpha>1/2$, we have that $|w|^{1/4}=\o(r_w)$. Furthermore,
   $\Re(\sqrt w)>\frac{\sqrt 2}{2}|w|^{1/2}$  for $w\in \Pg$, hence
\[  \mD''_w:=\ds D\left(1-\frac{\sqrt w  R_w}{2},\frac{r_w}{10}\right)\subset \mS_w.\]
In addition, by Lemma \ref{lem:chiw},  $\chi_w(\mz) = \mz+\o(r_w)$ for $\mz\in \mD''_w$, so
\[\mD'_w\subset \chi_w(\mD''_w)\subset \chi_w(\mS_w) = \varphi_w(V_w).\]

\medskip\noindent{\bf Step 2.} Given $Z\in D_w^\rep$, we  set
$$
\mx:= \chi_w^{-1}\left(1+\frac{\sqrt w}{2} Z\right).  $$
Note that   $\sqrt{w}Z$ has  modulus equal to  $r_w$.
Also, put
\begin{equation}\label{eq:mx} z:=\varphi_w^{-1}\left(1+\frac{\sqrt w}{2}  Z\right) =
\i c_1\sqrt w \cot(\pi \mx) + c_2w.\
\end{equation}
By Lemma \ref{lem:chiw} we have that
\[\mx -1 =  \frac{\sqrt w}{2} Z \cdot \bigl(1+\o(1)\bigr)=  \O(r_w),\]
hence
\[
\cot(\pi \mx) = \cot\bigl(\pi (\mx-1)\bigr)=\frac{2}{\pi \sqrt w Z}  \bigl(1+\o(1)\bigr).\]
 Remembering   that $c_1= {\pi\i}/2$, from   \eqref{eq:mx}
we deduce that
\[z =  \frac{2\i c_1}{\pi Z}  \cdot \bigl(1+\o(1)\bigr)
\with \frac{2\i c_1}{\pi Z}
= -\frac{1}{Z}\in D\left(|w|^{\alpha/2},\frac{|w|^{\alpha/2}}{2}\right).\]
So  when  $w\in \Pg$ is close enough to $0$, we find  that $z\in \Pr$ and
\[\psi_f^{-1}(z)  = -\frac{1}{z}-(1-a) \log\left(\frac{1}{z}\right)+\o(1).\]
Moreover, 
\[\frac{2\sin(\pi \mx)}{\pi\sqrt{w}} = -\frac{2\sin(\pi (\mx-1))}{\pi\sqrt{w}} = -\frac{2}{\sqrt{w}}\left(\frac{\sqrt w}{2} Z \cdot
\big(1+\o(1)\big)
\right) = \frac{1}{z}+\o\lrpar{\unsur{z}}.\]
Finally, as in (\ref{phiw}), we compute
\begin{align*}
Z = \frac{2}{\sqrt w}\bigl( \chi_{w}(\mx) -1\bigr)
&= -\frac{1}{z} + \o(1) -(1-a) \log\left(\frac{1}{z}  + \o\lrpar{\unsur{z}} \right)\\
&=  \psi_f^{-1}(z) +\o(1) \\
&= \psi_f^{-1}\circ \varphi_w^{-1}\left(1+\frac{\sqrt w}{2}   Z\right)+\o(1).
\end{align*}
This completes the proof of Property \ref{comprep}.
\end{proof}

\subsection{Approximate translation property}

In this paragraph, we prove that the approximate Fatou coordinate  $\varphi_w$ satisfies Property \ref{approx}, that is:
as $w\to 0$ in $\Pg$, the inclusions
$ \mR_w\subset \varphi_w(V_w)$ and $f_{w}\circ \varphi_w^{-1}(\mR_w)\subset V_{g(w)}$
hold (recall that the rectangle $\mR_w$ was defined in \eqref{eq:Rw}), and
\[\sup_{\mz\in \mR_w} \left|\varphi_{g(w)} \circ f_{w}\circ \varphi_w^{-1}(\mz) - \mz - \frac{\sqrt w}{2} \right|= \o(w).\]

\begin{proof}[Outline of the proof]
Let
\[\psi^0:=\psi_w, \ \psi^2:= \psi_{g(w)},\ \chi^0:=\chi_w,\text{and }  \chi^2:=\chi_{g(w)}.\]
To handle the fact that $f_w\circ\zeta^\pm$ is not exactly equal to $\zeta^\pm\circ g$ (see Lemma \ref{lem:zetapm}),
rather than dealing directly with  $\psi_2\circ f_w \circ \psi_0^{-1}$,
we introduce an intermediate change of coordinates
\[\psi^1:\C\ni \mz\mapsto \frac{f_w\bigl(\zeta^-(w)\bigr)\cdot \e^{2\pi \i \mz} -
f_w\bigl(\zeta^+(w)\bigr)}{\e^{2\pi \i \mz}-1}\in \P^1(\C)\setminus\bigl\{f_w\bigl( \zeta^+(w)\bigr),f_w\bigl(\zeta^-(w)\bigr)\bigr\} . \]
Let $\mH$ be the horizontal strip
\[\mH:=\bigl\{\mz\in \C~;~-1<\Im(\mz)<1\bigr\}.\]
We will see that there are lifts $\mf^0 : \mS_w \cv \C$, $\mf^1: \mH\cv\C$ and a map $\mf: \mR_w\cv\C$ such that the following diagram commutes:
\[\diagram
 \mR_w \ar[rrrr]^\mf &&&& \C \\
\mQ_w  \rrto^{\mf^0}\uto_{\chi^0}\dto^{\psi^0} &&  \mf^0(\mQ_w)  \dto^{\psi^1}\rrto^{\mf^1} && \mS_{g(w)} \dto_{\psi^2}\uto^{\chi^2}\\
\psi^0(\mQ_w)  \rrto_{f_w} \ar@/^2.5pc/[uu]^{\varphi_w}&& V_{g(w)} \rrto_{{\rm id}} && V_{g(w)} \ar@/_2.5pc/[uu]_{\varphi_{g(w)}}.
\enddiagram
\]

\medskip\noindent{\bf Step 1.}
We prove that $\mR_w\subset  \varphi_w(V_w)=\chi^0(\mS_w)$ and that
\begin{equation}
\notag \sup_{\mz\in \mR_w}\left|\varphi_w^{-1}(\mz)\right|=\O\left(|w|^{\alpha/2}\right).
\end{equation}
Define $\mQ_w:=(\chi^0)^{-1}(\mR_w)\subset \mS_w$.

\medskip\noindent{\bf Step 2.}
We define $\mf^0$ on $\mS_w$ and prove that for $\mz\in \mQ_w$,
\begin{equation}\label{eq:F0}
\mf^0(\mz) = \mz+\frac{\sqrt{w}}{2} + \frac{\pi (1-a)w}{4}\cot(\pi \mz) + \o(w).
\end{equation}
In particular, for $w$ close enough to $0$, $\mf^0(\mz)= \mz+\O\left(|w|^{1/2}\right)$, hence
$\mf^0(\mQ_w)\subset \mH.$

\medskip\noindent{\bf Step 3.}
We define $\mf^1$ on  $\mH$ and prove that for $\mz\in \mH$,
\[\mf^1(\mz) = \mz+\o(w).\]
In particular, for $w$ close enough to $0$, $\mf^1\circ \mf^0(\mz) =  \mf^0(\mz) + \o(w)$, from which we
deduce
 that
\[\mf^1\circ \mf^0(\mQ_w)\subset \mS_{g(w)}\whence f_{w}\circ \varphi_w^{-1}(\mR_w)\subset V_{g(w)}.\]

\medskip\noindent{\bf Step 4.}  We use $\chi^0$ and $\chi^2$ to eliminate the third term on the right
hand side of \eqref{eq:F0}. Specifically,
we define $\mf$  on $\mR_w$ and prove that for $\mz\in \mR_w$,
\[\mf(\mz) = \mz+\frac{\sqrt w}{2}+\o(w).\]
Thus Property \ref{approx} is established.
\end{proof}

\subsubsection{Proof of Step 1}
We prove that $\mR_w\subset \varphi_w(V_w)$ and that
\[\sup_{\mz\in \mR_w}\left|\varphi_w^{-1}(\mz)\right|=\O\left(|w|^{\alpha/2}\right). \]

Let $\mR'_w\subset \mS_w$ be the rectangle
\[\mR'_w:=\left\{\mz\in \mS_w~;~\frac{r_w}{20}<\Re(\mz)<1-\frac{r_w}{20}\text{ and }-1<\Im(\mz)<1\right\},\]
with, as before,  $r_w:=|w|^{(1-\alpha)/2}$.
We see that  $\mR_w\subset \mR'_w$ and the distance between the boundaries is $r_w/20$.
On the other hand, by Lemma \ref{lem:chiw},  for $\mz\in \mR'_w$,
  $\chi_w(\mz) =\mz+\o(r_w)$.
From this it follows that $\chi_w(\partial \mR'_w)$ surrounds $\mR_w$, whence
\[\mR_w\subset \chi_w(\mR'_w)\subset \chi_w(\mS_w)=\varphi_w(V_w), \] as desired.

To prove the estimate on $\varphi_w^{-1}(\mz)$, define $\mQ_w:=\chi_w^{-1}(\mR_w)$.
Since $\varphi_w = \chi_w\circ \psi_w^{-1}$,  we see that
$\varphi_w^{-1}(\mR_w)= \psi_w(\mQ_w)$.
The above sequence of inclusions shows
that   $\mQ_w\subset \mR'_w$.  Thus from
\[\psi_w(\mz) = \i c_1\sqrt w \cot(\pi \mz) +c_2 w,\]
we infer that
\begin{equation} \label{eq:step1}
\begin{aligned}
 \sup_{\mz\in \mR_w}\left|\varphi_w^{-1}(\mz)\right|= \sup_{\mz\in \mQ_w}\bigl|\psi_w(\mz)\bigr| &\leq
 \sup_{\mz\in \mR'_w}\bigl|\psi_w(\mz)\bigr|  \\
& \notag = \O\left(\frac{|w|^{1/2}}{r_w}\right) +\O(w)=\O\left(|w|^{\alpha/2}\right).
\end{aligned}\end{equation}
This completes the proof of Step 1.

\subsubsection{Proof of Step 2}
We define $\mf^0$ on $\mS_w$ and prove that for $\mz\in \mQ_w$,
\[
\mf^0(\mz) = \mz+\frac{\sqrt{w}}{2} + \frac{\pi (1-a)w}{4}\cot(\pi \mz) + \o(w) = \mz+\O\left(|w|^{1/2}\right).
\]

\medskip\noindent{\bf Step 2.1.} We first define $\mf^0$.
It will be convenient to set $w=\eps^2$ so that expansions with respect to $\sqrt w$ become expansions with respect to $\eps$. Set
\[\zeta_0^\pm (\eps):= \zeta^\pm(\eps^2) = \pm c_1\eps + c_2\eps^2, \
 \zeta_1^\pm(\eps):=f_{\eps^2}\circ \zeta_0^\pm(\eps)\text{ and } \zeta_2^\pm(\eps):=\zeta^\pm\circ g(\eps^2).\]
Choose $r_1>0$ small enough  so that  the only preimage of $0$ by $f$ within $D(0,2r_1)$ is~$0$.
Choose $r_2>0$ so that for $\eps\in D(0,r_2)$, the only preimage of $\zeta_1^\pm(\eps)=f_{\eps^2} \big( \zeta_0^\pm(\eps)\big)$ under  $f_{\eps^2}$ within $D(0,r_1)$ is $\zeta_0^\pm(\eps)$.
The function
\[(z,\eps)\longmapsto \frac{f_{\eps^2}(z) - \zeta_1^+(\eps)}{z-\zeta_0^+(\eps)}\cdot \frac{z-\zeta_0^-(\eps)}{f_{\eps^2}(z) - \zeta_1^-(\eps)}\]
extends holomorphically to $\Delta:=D(0,r_1)\times D(0,r_2)$ and does not vanish there. In addition, it identically takes the value $1$ for $\eps=0$. We set
\[\mmu:(z, \eps) \longmapsto \frac{1}{2\pi \i}\log \left( \frac{f_{\eps^2}(z) - \zeta_1^+(\eps)}{z-\zeta_0^+(\eps)}\cdot \frac{z-\zeta_0^-(\eps)}{f_{\eps^2}(z) - \zeta_1^-(\eps)}\right)\]
where the branch of logarithm is chosen so that $\mmu(z,0)\equiv 0$.
Consider the map $\mf^0$ defined on $\mS_w$ by
\[\mf^0(\mz):= \mz+\mmu\bigl(\psi^0(\mz),\eps\bigr).\]
Then, for $\mz \in \mS_w$, set
\[z:=\psi^0(\mz) \in V_w\sothat  \mz=  \frac{1}{2\pi \i}\log \left(\frac{z-\zeta_0^+(\eps)}{z-\zeta_0^-(\eps)} \right).\]
As $\mz$ ranges in $\mS_w$,  $z$ avoids the points $\zeta_0^\pm(\eps)$ and remains in a small disk around~$0$, thus
$f_{\eps^2}(z)$ avoids the points $\zeta_1^\pm(\eps) = f_{\eps^2}\bigl(\zeta_0^\pm(\eps)\bigr)$. So we can define
\[\mz_1:= \frac{1}{2\pi \i}\log \left(\frac{f_{\eps^2}(z)-\zeta_1^+(\eps)}{f_{\eps^2}(z)-\zeta_1^-(\eps)} \right)\]
where the branch is chosen so that
\[\mz_1-\mz = \mmu(z,\eps) = \mmu\bigl(\psi^0(\mz),\eps\bigr)= \mf^0(\mz)-\mz.\]
We therefore have
\[\psi^1\circ \mf^0(\mz) = \psi^1(\mz_1) = f_{\eps^2}(z) = f_{\eps^2}\circ \psi^0(\mz).\]
In other words,  the following diagram commutes:
\[\diagram
\mS_w \rto^{\mf^0}\dto_{\psi^0} & \C\dto^{\psi^1}\\
V_w\rto_{f_w} & \P^1(\C).
\enddiagram   \]
%

\medskip\noindent{\bf Step 2.2.} We now prove that for $(z,\eps)\in \Delta$,   the following estimate is true:
\[2\pi \i \mmu(z,\eps) = 2c_1\eps -2c_1(1-a)\eps z + \O(\eps z^2) + \O(\eps^3).\]
Indeed, observe that
\[
\frac{f_{\eps^2}(z) - \zeta_1^+(\eps)}{z-\zeta_0^+(\eps)}\cdot \frac{z-\zeta_0^-(\eps)}{f_{\eps^2}(z) - \zeta_1^-(\eps)}=  \frac{f(z) - f\bigl(\zeta_0^+(\eps)\bigr)}{z-\zeta_0^+(\eps)}\cdot \frac{z-\zeta_0^-(\eps)}{f(z) -f\bigl(\zeta_0^-(\eps)\bigr)},
\]
whence
\[2\pi \i \mmu(z,\eps) = \log\left(\frac{1-f\bigl(\zeta_0^+(\eps)\bigr)/f(z)}{1-\zeta_0^+(\eps)/z}\right)-\log\left(\frac{1-f\bigl(\zeta_0^-(\eps)\bigr)/f(z)}{1-\zeta_0^-(\eps)/z}\right).\]
Recall that $\zeta_0^\pm(\eps) = \pm c_1\eps + c_2\eps^2$, so
\[  f\bigl(\zeta_0^\pm(\eps)\bigr)= \pm c_1\eps +c_3 \eps^2 + \O(\eps^3)\with  c_3:= c_2+c_1^2.\]
Since $\mmu$ is holomorphic on $\Delta$ and $\mmu(z, 0)\equiv 0$, it admits an expansion of the form
$\mmu(z, \eps) =  u_1(z)\eps+ u_2(z)\eps^2+ \O(\eps^3)$ on $\Delta$. To find $u_1$ and $u_2$ we write
\begin{align*}
\log\left(\frac{1-f\bigl(\zeta_0^\pm(\eps)\bigr)/f(z)}{1-\zeta_0^\pm(\eps)/z}\right)&= \pm c_1\cdot\left(-\frac{1}{f(z)}+\frac{1}{z}\right)\cdot  \eps \\
&\quad +\left(-\frac{c_3}{f(z)}-\frac{c_1^2}{2\bigl(f(z)\bigr)^2}+\frac{c_2}{z}+\frac{c_1^2}{2z^2}\right)\cdot \eps^2+\O(\eps^3),
\end{align*}
and taking the difference between these two expressions we conclude that
\begin{align*}
2\pi \i \mmu(z,\eps) &= 2c_1\cdot\left(\frac{1}{z}-\frac{1}{f(z)}\right)\cdot  \eps  +\O(\eps^3) \\
&= 2c_1\eps -2c_1(1-a)\eps z + \O(\eps z^2)+\O(\eps^3) .
\end{align*}

\medskip\noindent{\bf Step 2.3.}
We finally establish    (\ref{eq:F0}).
If $\mz\in \mQ_w$ and
\[z:=\psi^0(\mz) = \i c_1\eps \cot(\pi \mz) +c_2 \eps^2 = \i c_1\eps \cot(\pi \mz) + \o(\eps),\]
  it then  follows from Step 1 of the proof (see \eqref{eq:step1}) that
$z = \O\left(|\eps|^\alpha\right)$. Since  $\alpha>1/2$ we see that   $ \O(\eps z^2) = \O\left(|\eps|^{1+2\alpha}\right)\subset \o(\eps^2)$. \footnote{This computation is responsible for the lower bound in the choice of the value of $\alpha$.}
Thus, for $\mz\in \mQ_w$,
\begin{align*}
\mmu\left(\psi^0(\mz),\eps\right) &=\ds  \frac{2c_1}{2\pi \i} \eps-\frac{c_1^2\cdot (1-a)}{\pi} \eps^2\cot(\pi \mz) + \o(\eps^2)\\
& = \ds \frac{\eps}{2} + \frac{\pi(1-a)\eps^2}{4}\cot (\pi \mz) + \o(\eps^2).
\end{align*}
Since  $\mf^0(\mz) = \mz+ \mmu\left(\psi^0(\mz),\eps\right)$, we arrive at the desired estimate \eqref{eq:F0}, and the proof of
 Step 2 is complete.

\subsubsection{Proof of Step 3}
We define $\mf^1$ on the horizontal strip $\mH$ and prove that
\[\mf^1(\mz) = \mz+\o(w).\]
The idea is simply that since the distance $\abs{\zeta_2^\pm - \zeta_1^\pm}$ is much smaller than $\abs{\zeta_1^+ - \zeta_1^-}$, $(\psi^2)^{-1}\circ \psi^1$ is very close to the identity.

\medskip\noindent{\bf Step 3.1.}
We first define $\mf^1$.
Let $\mu_1:\P^1(\C)\to \P^1(\C)$ and $\mu_2:\P^1(\C)\to \P^1(\C)$ be the M\"obius transformations defined by (recall that $\eps= \sqrt{w}$)
\[\mu_1(z):=\frac{z-\zeta_1^+(\eps)}{z-\zeta_1^-(\eps)}\and \mu_2(z):=\frac{z-\zeta_2^+(\eps)}{z-\zeta_2^-(\eps)}.\]
The M\"obius transformation $\mu:=\mu_2\circ \mu_1^{-1}:\P^1(\C)\to \P^1(\C)$
 sends $\mu_1\circ \zeta_2^+(\eps)$ to 0, $ \mu_1\circ \zeta_2^-(\eps)$ to $\infty$ and fixes 1.
Set
\[\delta^+:=\mu_1\circ \zeta_2^+(\eps)=\frac{\zeta_2^+(\eps)-\zeta_1^+(\eps)}{\zeta_2^+(\eps)-\zeta_1^-(\eps)}\and \delta^-:=\frac{1}{\mu_1\circ \zeta^-(\eps)} = \frac{\zeta_2^- (\eps)-\zeta_1^-(\eps)}{\zeta_2^-(\eps) -\zeta_1^+(\eps)}.\]
Note that
\[\zeta_2^\pm(\eps)-\zeta_1^\pm(\eps) = \O\left(\eps^4\right)\whereas \zeta_2^\pm(\eps)-\zeta_1^\mp(\eps)= \i\pi \eps\cdot \bigl(1+\o(1)\bigr),\]
therefore
\[\delta^+=\O\left(\eps^{3}\right)\and\delta^-=\O\left(\eps^{3}\right).\]
Thus, the image of the horizontal strip $\mH = \set{-1< \Im(\mz)<1}$ under the  exponential map
\[\exp:\C\ni \mz\mapsto \e^{2\pi \i \mz}\in \C\setminus \{0\}\]
avoids $\delta^+$ and $1/\delta^-$ and $\mu:\exp(\mH)\to \C\setminus \{0\}$ lifts to a map
$\mf^1:\mH\to \C$ such that the following diagram commutes:
\[\diagram
\mH \rto^{\mf^1}  \ar@/_1.5pc/[dd]_{\psi^1} \dto^\exp &  \C \ar@/^1.5pc/[dd]^{\psi^2} \dto_\exp \\
\P^1(\C) \rto^{\mu} & \P^1(\C) \\
\P^1(\C)  \rto_{{\rm id}} \uto_{\mu_1} & \P^1(\C)\uto^{\mu_2}.
\enddiagram
\]
Since $\mu(1)=1$, the choice of lift is completely determined by requiring $\mf^1(0)=0$.

\medskip\noindent{\bf Step 3.2.}
We estimate $\mf^1(\mz)-\mz$. Since $\mu(\delta^+)=0$, $\mu(1/\delta^-)=\infty$ and $\mu(1)=1$, we infer that
\[\mu(z) = z\cdot \frac{1-\delta^-}{1-\delta^+}\cdot \frac{1-\delta^+/z}{1-\delta^- z}.\]
As a consequence,
\[\mf^1(\mz)-\mz = \log(1-\delta^-) - \log(1-\delta^+) + \log (1-\delta^+/z) + \log(1-\delta^- z),\]
where   $\log$ is the principal branch of the logarithm on $\C\setminus \R^-$
(the arguments of the four logarithms are close to 1).
Since
$\delta^+=\O\left(\eps^{3}\right)$  and $\delta^-=\O\left(\eps^{3}\right),$
we conclude that
\[\sup_{\mz\in \mH} \left|\mf^1(\mz)-\mz\right|  = \O\left(\eps^{3}\right) \subset  \o(\eps^2) = \o(w),\]
as desired.

\medskip\noindent{\bf Step 3.3.} Let us show that $ \mf^1\circ \mf^0(\mQ_w)\subset \mS_{g(w)} $.
First, we saw in the course of Step 1 that  $\mQ_w\subset \mR'_w$. Now,  when $w$ is small,
\[\bigl|g(w)\bigr|^{1/4} =   \abs{w}^{1/4}+ \o\left(\abs{w}^{1/4}\right),\]
hence  $\mR'_w\subset \mS_{g(w)}$ and  the distance between the
boundaries is comparable to $|w|^{1/4}$.
Since $\mf^1\circ \mf^0(\mz)=\mz+\O\left(|w|^{1/2}\right)$ on $\mQ_w$ and $|w|^{1/2}=\o\left(|w|^{1/4}\right)$, we see that
$ \mf^1\circ \mf^0(\mQ_w)\subset \mS_{g(w)} $, as claimed.

From this we deduce  that
\[f_{w}\circ \varphi_w^{-1}(\mR_w) = f_w\circ \psi_w(\mQ_w) = \psi_{g(w)}\bigl(\mf^1\circ \mf^0(\mQ_w)\bigr)\subset \psi_{g(w)}(\mS_{g(w)}) = V_{g(w)},\]
which finishes the proof of Step 3.

\subsubsection{Proof of Step 4}We define $\mf$  on $\mR_w$ and prove that
\[\mf(\mz) = \mz+\frac{\sqrt w}{2}+\o(w).\]

Let
\[\mf:=\chi^2\circ (\mf^1\circ \mf^0)\circ (\chi^0)^{-1} : \mR_{w}\to \C,\]
so that the following diagram commutes:
\[\diagram
\mR_{w} \ar[rrrr]^{\mf}&&&&\C\\
\mQ_w   \rrto_{\mf^0} \uto^{\chi^0}&&\mf^0(\mQ_w)  \rrto_{\mf^1}&& \mS_{g(w)}.\uto_{\chi^2}
\enddiagram\]
For $\mz\in \mQ_w$,   define
\[\mv(\mz):=\frac{\sqrt w}{2} + \frac{\pi(1-a)w}{4}\cot (\pi \mz)= \frac{\sqrt w}{2}+ \o\left(|w|^{1/2}\right),\]
where the second equality follows from the fact that $\cot (\pi \mz) = \O(r_w^{-1})$ on $\mQ_w$.
Now we write
\begin{align}
\notag \mf\bigl( \chi^0(\mz)\bigr)& = \chi^2\left(\mz+ \mv(\mz)+ \o(w)\right)\\
 &=\label{eq:Fchi0} \mz+ \mv(\mz)+ \o(w) + \frac{\sqrt{g(w)}  (1-a)}{2}\cdot\log \left(\frac{2\sin\bigl(\pi \mz+\pi \mv(\mz)+\o(w)\bigr)}{\sqrt{g(w)}}\right).  \end{align}
Using
\[\sqrt{g(w)} = \sqrt{w + \O\left(w^2\right)} = \sqrt w + \O\left(|w|^{3/2}\right),\] and arguing as in Lemma  \ref{lem:chiw},
we see that the logarithm in \eqref{eq:Fchi0} is $\O(\log\abs{w})$.
Thus we infer that
\[\mf\bigl(\chi^0(\mz)\bigr) = \mz + \mv(\mz) - \frac{\sqrt w(1-a)}{2}\log  \left(\frac{2\sin\bigl(\pi \mz +\pi \mv(\mz)\bigr)}{\pi \sqrt w }\right) + \o(w),\]
as a result:
\[\mf\bigl(\chi^0(\mz)\bigr)-\chi^0(\mz) = \mv(\mz)-\frac{\sqrt w(1-a)}{2}\log\left(\frac{\sin\bigl(\pi \mz +\pi \mv(\mz)\bigr)}{\sin(\pi \mz)}\right)+\o(w).\]
From the estimate  $\mv(\mz) =\sqrt w/2 + \o\left(|w|^{1/2}\right)$, we deduce
\begin{align*}
\frac{\sin\bigl(\pi \mz +\pi \mv(\mz)\bigr)}{\sin(\pi \mz)} &= \frac{\sin(\pi \mz) +\ds  \frac{\pi \sqrt w}{2} \cos(\pi \mz) + \o\left(|w|^{1/2}\right)}{\sin(\pi \mz)} \\
&= 1 +  \frac{\pi \sqrt w}{2}\cdot \cot (\pi \mz) +\o\left(|w|^{1/2}\right).
\end{align*}
So finally,
\begin{align*}
\mf\bigl(\chi^0(\mz)\bigr)-\chi^0(\mz) &= \frac{\sqrt w}{2} + \frac{\pi(1-a)w}{4}\cot(\pi \mz) \\
&\quad - \frac{\sqrt w(1-a)}{2}\cdot \frac{\pi \sqrt w}{2}\cot (\pi \mz) + \o(w)\\
& = \frac{\sqrt w}{2}+\o(w).
\end{align*}
This completes the proof of Step 4, and accordingly of Property \ref{approx} of the approximate Fatou coordinates.
\qed

\section{Parabolic implosion}\label{sec_key}

This section   is devoted to the proof of Proposition \ref{key}.

\subsection{Set up and notation}
Let $C_f$ be a compact subset of $\bpf$ and $C_g$ be a compact subset of $\bpg$.
We need to prove that the sequence of maps
\[\C^2\ni (z,w) \mapsto \Skew^{\circ 2n+1}\bigl(z,g^{\circ n^2}(w)\bigr)\in \C^2\]
converges uniformly on $C_f\times C_g$ to the map
\[C_f\times C_g\ni (z,w)\mapsto \bigl( \L_f(z),0\bigr)\in \C\times\{0\}.\]
For $(z,w)\in C_f\times C_g$ and for $m\geq 0$, set
\[w_m:=g^{\circ m}(w).\]
This sequence converges uniformly to $0$ on $C_g$
 so the difficulty consists in proving that the first coordinate converges uniformly to $\L_f(z)$.

To do this, we will have to estimate various quantities
 which depend on an integer $k\in [0,2n+1]$ (corresponding to an iterate $m = n^2+k\in [n^2, (n+1)^2]$).
 We adopt the convention that
 the notation $\o(\cdot)$ or $\O(\cdot)$ stands for  an  estimate that
 is uniform on $C_f\times C_g$,  and  depends only on $n$, meaning that it  is  uniform with respect to $k\in [0,2n+1]$.

For $m_2\geq m_1\geq 0$, we set
\[\f_{m_2,m_1}:=f_{w_{m_2-1}}\circ \cdots \circ f_{w_{m_1}}\with f_{w}(z) :=f(z)+\frac{\pi^2}{4}w.\]
 By convention, an empty composition is the identity, whence $\f_{m,m}={\rm id}$.
Then,
\[\Skew^{\circ 2n+1}\bigl(z,g^{\circ n^2}(w)\bigr)= \bigl(\f_{(n+1)^2,n^2}(z),w_{(n+1)^2}\bigr)\]
so we must prove that
\[\f_{(n+1)^2,n^2}(z) = \L_f(z) + \o(1).\]

\subsection{Outline of the proof}

Let us recall that $R>0$ was chosen  so large  that $F:Z\mapsto -1/f(-1/Z)$ satisfies
\begin{equation}\label{eq:choiceR}
\sup_{|Z|>R} \bigl|F(Z)-Z-1\bigr|<\frac{1}{10}\and \sup_{|Z|>R} \bigl|F'(Z)-1\bigr|<\frac{1}{10}.
\end{equation}
The repelling petal $\Pr$ is the image of $-\H_R$ under the univalent map $\psi_f$, and the
notation $\psi_f^{-1}$ is reserved for the inverse branch $\psi_f^{-1}:\Pr\to \H_R$.

Set
\[k_n:=\left\lfloor n^{\alpha}\right\rfloor= \o(n)\where  \frac12 <\alpha <\frac23 \text{ is as in }\eqref{eq:alpha}.\]
The proof will be divided into four propositions that we state  independently, corresponding to three
moments of the   transition between $n^2$ and $(n+1)^2$. The proofs will be given in \S \ref{sec:eps}  to \ref{sec:leaving}.

We first show that for the first $k_n$ iterates, the orbit  stays close to an orbit of $f$  (the bound $\alpha <2/3$ is important here).

\begin{proposition}[Entering the eggbeater]\label{entering}
Assume $z\in C_f$ and let $\xniota$ be defined by $\xniota:=\f_{n^2+k_n,n^2}(z)$.\footnote{The superscript
$\iota$ stands for {\em incoming}, and in Proposition \ref{leaving} below, $o$ stands for {\em outgoing}. This convention was used in \cite{bsu}.}

Then, $\xniota\sim -1/k_n$, whence $\xniota\in \bpf$ for $n$ large enough. Moreover,
\[\phi_f\left(\xniota\right)=\phi_f\left(f^{\circ k_n}(z)\right)+\o(1)\as n\to+\infty.\]
\end{proposition}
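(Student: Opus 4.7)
The plan is to work in the attracting Fatou coordinate $\phi_f$, where one iterate of $f$ acts as the unit translation $T_1$, and to track the accumulated error introduced by the perturbations $\eta_k := \tfrac{\pi^2}{4}w_{n^2+k}$. Standard parabolic theory for $g$ gives $w_m\sim 1/m$ uniformly for $w\in C_g$, so $\eta_k = \O(1/n^2)$ uniformly for $k\in[0,k_n]$ and $(z,w)\in C_f\times C_g$, since $k_n = \o(n^2)$.

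First I would dispose of an initial burn-in to enter the petal. Compactness of $C_f\subset \bpf$ provides a uniform $N_0\in\N$ such that $f^{\circ N_0}(z)$ lies well inside $\Pa$ for every $z\in C_f$. A telescoping argument based on the uniform continuity of the finitely many maps $f,\ldots,f^{\circ N_0}$ on a neighborhood of $C_f$ and the bound $\eta_k=\O(1/n^2)$ shows that, for $n$ large, $x_{N_0}:=\f_{n^2+N_0,n^2}(z) = f^{\circ N_0}(z) + \O(1/n^2)$, so $x_{N_0}\in \Pa$ and $Z_{N_0}:=\phi_f(x_{N_0}) = \phi_f(f^{\circ N_0}(z))+\o(1)$. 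Next, for $k\in[N_0,k_n]$, setting $x_k:=\f_{n^2+k,n^2}(z)$ and $Z_k:=\phi_f(x_k)$, the conjugacy $\phi_f\circ f = T_1\circ \phi_f$ combined with a Taylor expansion of $\phi_f$ at $f(x_k)$ gives
\[
Z_{k+1} = Z_k + 1 + \phi_f'\bigl(f(x_k)\bigr)\,\eta_k + \O\Bigl(\sup_{|\zeta-f(x_k)|\leq |\eta_k|}|\phi_f''(\zeta)|\cdot \eta_k^2\Bigr).
\]
The asymptotic expansion $\phi_f(z)=-1/z+\O(\log|z|)$ near $0$ yields, by Cauchy estimates on subpetals, $|\phi_f'(\zeta)|=\O(|\zeta|^{-2})$ and $|\phi_f''(\zeta)|=\O(|\zeta|^{-3})$. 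I would then run an induction on $k$ establishing simultaneously that $x_k$ remains in $\Pa$, that $x_k\sim -1/k$, and that $Z_k - Z_{N_0} - (k-N_0) = \O(k^3/n^2)$. Under these hypotheses $|\phi_f'(f(x_k))|=\O(k^2)$ and $|\phi_f''|=\O(k^3)$ along the orbit, so the per-step error is $\O(k^2/n^2)+\O(k^3/n^4) = \O(k^2/n^2)$; summation yields the claimed cumulative bound, and since $k_n^3/n^2=\o(1)$ one keeps $\Re(Z_k)\asymp k$, which forces $x_k$ to remain deep inside $\Pa$ and $x_k\sim -1/k$, closing the induction.

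Evaluating at $k=k_n$ and combining with the burn-in yields $Z_{k_n}=\phi_f(f^{\circ k_n}(z))+\o(1)+\O(k_n^3/n^2)$, whence both assertions of the proposition since $\xniota = \phi_f^{-1}(Z_{k_n})$ and $\phi_f(\zeta)\sim -1/\zeta$ near $0$. The main obstacle is the joint induction: one must simultaneously maintain membership in the petal and the quantitative estimate $|\phi_f'\circ f|=\O(k^2)$ along the orbit, which produces the cumulative error $k_n^3/n^2 = \O(n^{3\alpha-2})$. The upper bound $\alpha<2/3$ from \eqref{eq:alpha} is exactly the threshold making this error vanish; this is precisely the numerical content alluded to in the statement.
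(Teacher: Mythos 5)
Your proposal is essentially correct and follows the same route as the paper: a finite burn-in to enter the attracting petal, followed by an iteration in Fatou coordinates where each step contributes $1+\O(k_n^2/n^2)$, and a cumulative error $\O(k_n^3/n^2)=\o(1)$ thanks to $\alpha<2/3$. The only organizational difference is that the paper separates the ``orbit stays in the petal'' argument from the Fatou-coordinate estimate: it works in the coordinate $Z=-1/z$, compares $F_m(Z)=-1/f_{w_m}(-1/Z)$ with $F(Z)$ via the Maximum Principle on the annulus $R<|Z|<2k_n$ (where the perturbation is $\O(k_n^2/n^2)$), and runs an explicit disk-nesting induction to show $-1/x_k\in\overline D(-1/x_{\kappa_0}+k-\kappa_0,\,(k-\kappa_0)/10)\subset\{\Re Z>R,\,|Z|<2k_n\}$; only then does it pass to $\phi_f$. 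Your proposal folds petal membership, the asymptotic $x_k\sim -1/k$, and the error bound into one joint induction. This works, but as written the hypothesis ``$x_k\sim -1/k$'' is an asymptotic statement and would need to be upgraded to a quantitative one for a genuine induction (the paper's disk inclusion is precisely such a version, giving $-1/x_k\in\bigl(\tfrac{9}{10}k,\tfrac{11}{10}k+C\bigr)$ uniformly). Also note the paper uses the uniform bound $|\phi_f'|=\O(k_n^2)$ over the whole range rather than your $k$-dependent $\O(k^2)$; both summations yield the same $\O(k_n^3/n^2)$.
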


The next two propositions concern the iterates between $n^2+k_n$ and $(n+1)^2- k_n$.

\begin{proposition}[Transition length]\label{eps}
As $n\to \infty$,
\[2n\cdot\left(\sum_{m=n^2+k_n}^{n^2+2n-k_n} \frac{\sqrt{w_m}}{2}\right)= 2n-2k_n+\o(1).\footnote{Recall that $\sqrt w$ is the square-root with positive real part.}\]
\end{proposition}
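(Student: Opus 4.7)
The plan is to first establish sharp asymptotics for $w_m = g^{\circ m}(w)$ when $m$ is of order $n^2$, precise enough that accumulated errors over $\sim 2n$ terms remain $\o(1)$ after multiplication by $2n$. Since $g(w) = w - w^2 + \O(w^3)$, the standard parabolic identity $1/g(w) - 1/w = 1 + \O(w)$ together with telescoping yields, uniformly for $w \in C_g$,
\[
\frac{1}{w_m} = m + \O(\log m) \qquad \text{as } m \to \infty,
\]
uniformity being ensured by the fact that every orbit starting in $C_g$ enters the standard petal $\Pg = D(r,r)$ in a uniformly bounded number of steps. I would then deduce
\[
\sqrt{w_m} = \frac{1}{\sqrt{m}} + \O\!\left(\frac{\log m}{m^{3/2}}\right),
\]
with the positive-real-part branch (legitimate since $w_m \in \Pg$).

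The second step is a Taylor expansion of $1/\sqrt{m}$ around $m = n^2$. Setting $j = m - n^2$, in the range $j \in [k_n,\, 2n - k_n]$ we have $j \leq 2n$ and hence $j^2/n^3 = \O(1/n)$, giving
\[
\sqrt{m} = n + \frac{j}{2n} + \O\!\left(\frac{1}{n}\right), \qquad \frac{n}{\sqrt{m}} = 1 - \frac{j}{2n^2} + \O\!\left(\frac{1}{n^2}\right).
\]
Combining with the asymptotic for $\sqrt{w_m}$ and noting $\log m/m = \O(\log n/n^2)$ on this range, I obtain the term-by-term estimate
\[
2n \cdot \frac{\sqrt{w_m}}{2} = 1 - \frac{j}{2n^2} + \O\!\left(\frac{\log n}{n^2}\right).
\]

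It then suffices to sum. There are $N := 2n - 2k_n + 1$ terms, and the arithmetic-progression identity $\sum_{j = k_n}^{2n - k_n} j = nN$ yields
\[
2n \sum_{m = n^2 + k_n}^{n^2 + 2n - k_n} \frac{\sqrt{w_m}}{2} = N - \frac{N}{2n} + \O\!\left(\frac{\log n}{n}\right) = 2n - 2k_n + \frac{k_n}{n} - \frac{1}{2n} + \O\!\left(\frac{\log n}{n}\right).
\]
Since $k_n = \lfloor n^\alpha \rfloor$ with $\alpha < 1$, the term $k_n/n$ is $\o(1)$, which gives the claimed equality $2n - 2k_n + \o(1)$. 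The only genuinely analytic step is the first one, establishing the $\O(\log m)$ remainder in the asymptotic for $1/w_m$ uniformly in $w \in C_g$; the remaining steps are routine Taylor expansions and arithmetic bookkeeping.
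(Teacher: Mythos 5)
Your proof is correct and is essentially the same as the paper's: both establish $\sqrt{w_m}=m^{-1/2}+\O(\log m\cdot m^{-3/2})$, Taylor-expand in $j=m-n^2$, and sum the resulting arithmetic progression, with the aggregate errors $k_n/n$ and $\O(\log n/n)$ killed by $\alpha<1$. The only cosmetic difference is the derivation of the estimate $1/w_m=m+\O(\log m)$: you telescope $1/g(w)-1/w=1+\O(w)$ directly, whereas the paper invokes the attracting Fatou coordinate $\phi_g$ of $g$ and the identity $\phi_g(w_m)=m+\O(1)$; the two routes are equivalent and give identical remainders.
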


\begin{proposition}[Passing through the eggbeater]\label{passing}
Let $\left(\xniota\right)_{n\geq 0}$ be a sequence such that $\xniota\sim -1/k_n$, whence $\xniota\in \bpf$ for $n$ large enough.
Set \[\xno:=\f_{(n+1)^2-k_n,n^2+k_n}\left(\xniota\right).\]
Then, $\xno\sim 1/k_n$, whence $\xno\in \Pr$ for $n$ large enough. Moreover, as $n\cv \infty$,
\[\psi_f^{-1}\left(\xno\right) = \phi_f\left(\xniota\right) + 2n\cdot \left(\sum_{m=n^2+k_n}^{n^2+2n-k_n} \frac{\sqrt{w_m}}{2}\right) -2n + \o(1)
 = \phi_f\left(\xniota\right) -2k_n +\o(1).
 \]
\end{proposition}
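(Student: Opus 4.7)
We conjugate through the approximate Fatou coordinates $\varphi_{w_m}$ built in Section \ref{sec:approxcoord}, in which each step of $\f$ becomes approximately a translation by $\sqrt{w_m}/2$ (Property \ref{approx}). Writing $m_\iota := n^2+k_n$, $m_o := (n+1)^2-k_n$, and
$$z_k := \f_{m_\iota+k,\,m_\iota}(\xniota), \qquad \mz_k := \varphi_{w_{m_\iota+k}}(z_k), \qquad 0\le k\le 2n+1-2k_n,$$
so that $z_0=\xniota$ and $z_{2n+1-2k_n}=\xno$, the plan is to enter coordinates using Property \ref{compatt}, transit using iterated Property \ref{approx}, and leave using Property \ref{comprep} to read off $\psi_f^{-1}(\xno)$.

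\textbf{Entering and leaving.} From $\xniota\sim -1/k_n$ one extracts $\phi_f(\xniota)=k_n+\o(k_n)$; since $R_{w_{m_\iota}}\sim n^\alpha\sim k_n$, a direct check places $\phi_f(\xniota)$ well inside $D^\att_{w_{m_\iota}}=D(R_{w_{m_\iota}},R_{w_{m_\iota}}/10)$, and Property \ref{compatt} yields
$$\mz_0 = \frac{\sqrt{w_{m_\iota}}}{2}\bigl(\phi_f(\xniota)+\o(1)\bigr).$$
Symmetrically, at the exit, once $\mz_{2n+1-2k_n}$ has been controlled, the quantity $Z_\text{end}:=2(\mz_{2n+1-2k_n}-1)/\sqrt{w_{m_o}}$ will turn out to satisfy $Z_\text{end}\sim -k_n\sim -R_{w_{m_o}}$, so it belongs to $D^\rep_{w_{m_o}}$ and Property \ref{comprep} gives $\psi_f^{-1}(\xno)=Z_\text{end}+\o(1)$, which in particular forces $\xno\sim -1/Z_\text{end}\sim 1/k_n$.

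\textbf{Transit.} The core step is an induction showing that $\mz_k\in\mR_{w_{m_\iota+k}}$ and
$$\mz_k = \mz_0 + \sum_{j=0}^{k-1}\frac{\sqrt{w_{m_\iota+j}}}{2} + \mathcal{E}_k,$$
where $\mathcal{E}_k$ accumulates the $\o(w_m)$ errors from Property \ref{approx}. Since $w_m\asymp 1/n^2$ uniformly for $m\in[m_\iota,m_o]$ and $k\le 2n+1$, this gives $\mathcal{E}_k=\o(1/n)$ uniformly. The membership $\mz_k\in\mR_{w_{m_\iota+k}}$ is the main bookkeeping point: using Proposition \ref{eps}, the real part of the unperturbed sum runs monotonically from $\Re(\mz_0)\sim n^{\alpha-1}/2$ at $k=0$ up to $\sim 1-n^{\alpha-1}$ at $k=2n+1-2k_n$, whereas the rectangle is defined by $\Re(\mz)\in(r_w/10,\,1-r_w/10)$ with $r_w\sim n^{\alpha-1}$; since $\alpha<1$ the error $\mathcal{E}_k=\o(1/n)$ is negligible compared to the factor-$5$ margin, so the induction carries through.

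\textbf{Conclusion.} Combining entry, transit, and exit,
$$\psi_f^{-1}(\xno) = \frac{\sqrt{w_{m_\iota}}}{\sqrt{w_{m_o}}}\bigl(\phi_f(\xniota)+\o(1)\bigr) + \frac{2}{\sqrt{w_{m_o}}}\left(\sum_{m=m_\iota}^{m_o-1}\frac{\sqrt{w_m}}{2}-1\right) + \o(1).$$
From the standard expansion $w_m=1/m+\O(\log m/m^2)$ for iterates of $g(w)=w-w^2+\O(w^3)$, one reads off $\sqrt{w_{m_\iota}}/\sqrt{w_{m_o}}=1+\O(1/n)$ and $2/\sqrt{w_{m_o}}=2n+\O(1)$. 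Combined with $\phi_f(\xniota)=\O(n^\alpha)$ and the Proposition \ref{eps} estimate $\sum_m\sqrt{w_m}/2-1=\o(1)$, all correction terms are absorbed into $\o(1)$, yielding
$$\psi_f^{-1}(\xno) = \phi_f(\xniota) + 2n\sum_{m=n^2+k_n}^{n^2+2n-k_n}\frac{\sqrt{w_m}}{2} - 2n + \o(1),$$
and the second equality in the statement follows by one more application of Proposition \ref{eps}. The main obstacle is really the bookkeeping in the transit step: once the cumulative error $\mathcal{E}_k=\o(1/n)$ is controlled and the rectangle membership is verified, the rest is a routine linearization using the three properties of the approximate Fatou coordinates.
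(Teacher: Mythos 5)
Your proposal is correct and follows the paper's proof closely: enter the approximate Fatou coordinates via Property \ref{compatt}, iterate Property \ref{approx} with a cumulative error $\o(1/n)$ while verifying $\mz_k\in\mR_{w_m}$, exit via Property \ref{comprep}, and invoke Proposition \ref{eps} for the transition length. One small imprecision: the rectangle margin, of size $\sim r_{w_m}\sim n^{\alpha-1}$, dominates the error $\o(1/n)$ because $\alpha>0$ (a fortiori $\alpha>\tfrac12$, already needed for Property \ref{approx}), not because $\alpha<1$; the upper bound $\alpha<\tfrac23$ is used in Propositions \ref{entering} and \ref{leaving}, not here.
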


The last one is similar to Proposition \ref{entering}.

\begin{proposition}[Leaving the eggbeater]\label{leaving}
Let $\left(\xno\right)_{n\geq 0}$ be a sequence contained in $\Pr$ such that  $\psi_f^{-1}\left(\xno\right) = -k_n+\O(1)$ as $n\to +\infty$. Then,
\[\f_{(n+1)^2,(n+1)^2-k_n}\left(\xno\right) = f^{\circ k_n}\left(\xno\right)+\o(1)\as n\to +\infty.\]
\end{proposition}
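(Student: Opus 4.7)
Write $y_j := \f_{(n+1)^2-k_n+j,\,(n+1)^2-k_n}(\xno)$ and $z_j := f^{\circ j}(\xno)$ for $0\le j \le k_n$, so that $y_0=z_0=\xno$ and we want $y_{k_n}-z_{k_n}=\o(1)$. By hypothesis $Z_0 := \psi_f^{-1}(\xno) = -k_n+\O(1)$; the semi-conjugation $f\circ\psi_f=\psi_f\circ T_1$ then gives $z_j=\psi_f(Z_0+j)$, so the parameter $Z_j:=Z_0+j$ has magnitude $\asymp k_n-j+1$. On the other hand the standard parabolic asymptotic for $g$ yields $w_m\sim 1/m$ uniformly on $C_g$, hence $w_m=\O(1/n^2)$ for every $m\in[(n+1)^2-k_n,(n+1)^2-1]$.

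The plan is to lift the perturbed orbit via $W_j:=\psi_f^{-1}(y_j)$ as long as $y_j\in\Pr$. Using $f(\psi_f(W))=\psi_f(W+1)$ and inverting $\psi_f$ to first order around $W_j+1$, the recursion $y_{j+1}=f(y_j)+\tfrac{\pi^2}{4}w_m$ transforms into
\[
W_{j+1}-W_j-1 \;=\; \frac{\pi^2 w_m/4}{\psi_f'(W_j+1)}\bigl(1+\O(w_m)\bigr).
\]
Setting $\delta_j:=W_j-Z_j$, so $\delta_0=0$, and using the asymptotic $\psi_f'(Z)\sim 1/Z^2$ at $-\infty$ (combined with local boundedness of $\psi_f'$ elsewhere), one obtains
\[
|\delta_{j+1}-\delta_j|\;=\;\O\!\left(\tfrac{(k_n-j)^2}{n^2}\right),
\]
as long as $\delta_j$ stays bounded. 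Telescoping gives the uniform bound $|\delta_j|=\O(k_n^3/n^2)=\O(n^{3\alpha-2})=\o(1)$, using precisely $\alpha<2/3$ from \eqref{eq:alpha}; the bootstrap of the boundedness assumption on $\delta_j$ is routine.

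For $j$ close to $k_n$, the lifted parameter $Z_0+j$ may leave the domain $-\H_R$ where $\psi_f^{-1}$ is defined. I would therefore stop the lifting at the largest $j_0$ with $\Re(Z_0+j_0)\le -R$, so that $k_n-j_0=\O(1)$ and the previous step yields $y_{j_0}=z_{j_0}+\o(1)$ after applying $\psi_f$, which is Lipschitz on a bounded neighborhood of $Z_{j_0}$. The remaining bounded number of iterations is then run directly in the $z$-variable: on a compact neighborhood of both orbits, $f$ has bounded derivative, so $\O(1)$ iterations of $f_{w_m}$ versus $f$ can differ by at most $\O(1/n^2)=\o(1)$. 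The principal difficulty is the competition between the expansion of $f$ along the escaping orbit (reflected by the $(k_n-j)^2$ factor coming from $1/\psi_f'$) and the size $\O(1/n^2)$ of each perturbation; the total error $\sum(k_n-j)^2/n^2\sim k_n^3/n^2$ is exactly what forces the upper bound $\alpha<2/3$ fixed in \eqref{eq:alpha}.
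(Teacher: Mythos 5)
Your proposal is correct and follows essentially the same strategy as the paper's proof: lift the perturbed orbit through $\psi_f^{-1}$, estimate the per-iterate drift via the asymptotic $\psi_f'(Z)\sim 1/Z^2$, sum the errors to $\O(k_n^3/n^2)=\o(1)$ using $\alpha<2/3$, and finish with a bounded number of direct iterations once the lift reaches the boundary of $-\H_R$. The only cosmetic difference is that you keep the sharper per-step bound $\O\bigl((k_n-j)^2/n^2\bigr)$ and sum a cube, whereas the paper replaces it by the uniform bound $\O(k_n^2/n^2)$ and multiplies by $k_n$; the telescoped total is identical.
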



\medskip
\begin{proof}[Proof of Proposition \ref{key}]
We start with Proposition \ref{entering}: if $z\in C_f$, then
\[\xniota:=\f_{n^2+k_n,n^2}(z)\]
satisfies $\xniota\sim -1/k_n$ and as $n\to +\infty$,
\[\phi_f\left(\xniota\right)=\phi_f\left(f^{\circ k_n}(z)\right)+\o(1) = \phi_f(z)+k_n+\o(1).\]
According to Proposition \ref{passing},
\[\xno:=\f_{(n+1)^2-k_n,n^2+k_n}\left(\xniota\right)= \f_{(n+1)^2-k_n,n^2}(z)\]
satisfies $\xno\sim 1/k_n$ and as $n\to +\infty$,
\[\psi_f^{-1}\left(\xno\right) = \phi_f(z) +k_n -2k_n + \o(1) = \phi_f(z)-k_n+\o(1).\]
Finally, since $\phi_f(z)-k_n+\o(1) = -k_n+\O(1)$, Proposition \ref{leaving} implies that as $n\to +\infty$,
\[\f_{(n+1)^2,n^2}(z) =
\f_{(n+1)^2,(n+1)^2-k_n}\left(\xno\right)  = f^{\circ k_n}\left(\xno\right)+\o(1).\]
This in turn finishes the  proof of Proposition \ref{key}   because
\begin{align*}
f^{\circ k_n}\left(\xno\right) =  f^{\circ k_n}\circ \psi_f\circ \psi_f^{-1}\left(\xno\right)& = \psi_f\left( \psi_f^{-1}\left(\xno\right) + k_n\right)\\
&= \psi_f\bigl(\phi_f(z) + \o(1)\bigr) = \L_f(z) + \o(1).\qedhere
\end{align*}
\end{proof}

\subsection{Comparison with classical parabolic implosion}

Propositions \ref{entering}, \ref{passing} and \ref{leaving} are valid if instead of using the sequence $\bigl(w_m:=g^{\circ m}(w)\bigr)$, we use the sequence $(w'_m)$ defined by
\[w'_m:=\frac{1}{n^2}\quad\text{if}\quad n^2\leq m\leq (n+1)^2-1.\]
In that case, the only modification is for Proposition \ref{eps} which has to be replaced by:
\[2n\cdot\left(\sum_{m=n^2+k_n}^{n^2+2n-k_n} \frac{\sqrt{w'_m}}{2}\right)= 2n\cdot\left(\sum_{m=n^2+k_n}^{n^2+2n-k_n} \frac{1}{2n}\right) = 2n-2k_n+1+\o(1).\]
Following the proof of Proposition \ref{key}, we get
\[f_{1/n^2}^{\circ (2n+1)}(z) =  \psi_f\bigl(\phi_f(z) +1 + \o(1)\bigr) = f\circ \L_f(z) + \o(1).\]
We thus see that in our non-autonomous context,
 where the dynamics slowly decelerates  as the orbit transits between  the eggbeaters,
 it takes exactly one more iteration to make the  transition than in the classical case.
\subsection{Transition length\label{sec:eps}}

In this paragraph, we prove Proposition \ref{eps}, which concerns   the dynamics of $g$ only.
We need to show that
\[
\sum_{m=n^2+k_n}^{n^2+2n-k_n} \frac{\sqrt{w_m}}{2}
= 1  - \frac{k_n}{n}+ \o\lrpar{\unsur{n}}
\as n\to +\infty.\]
With $\phi_g:\bpg\to \C$ denoting  the attracting Fatou coordinate of $g$,
for all $k\geq 0$, we have
\[\phi_g(w_{n^2+k}) = \phi_g(w) + n^2+k = n^2+k + \O(1).\footnote{Recall that the notations $\o(\cdot)$ and $\O(\cdot)$ mean that the estimates are uniform on $C_f\times C_g$ and with respect to $k\in [0,2n+1]$.}
\]
As a consequence, for   $k\in [k_n,2n-k_n]$ it holds
\[w_{n^2+k} = \phi_g^{-1}\bigl(n^2+k+\O(1)\bigr) = \frac{1}{n^2+k+\O(\log n)}\]
and
\[
\sqrt{w_{n^2+k}} = \frac{1}{\sqrt{n^2+k+\O(\log n)}} = \frac{1}{n}-\frac{k}{2n^3} + \O\left(\frac{\log n}{n^3}\right).
\]
It follows that
\begin{align*}
\sum_{k=k_n}^{2n-k_n} \sqrt{w_{n^2+k}}&= \frac{2n-2k_n+1}{n}-\frac{2n(2n-2k_n+1)}{4n^3}+\O\left(\frac{\log n}{n^2}\right)
\\
&= 2-\frac{2k_n}{n}+\o\left(\frac{1}{n}\right)
\end{align*}
and we are done. \qed

%
%
%

\subsection{Entering the eggbeater\label{sec:entering}}

In this paragraph, we prove Proposition \ref{entering}, that is: if $z\in C_f$ and
$\xniota:=\f_{n^2+k_n,n^2}(z),$
then, as $n\to +\infty$,
\[\xniota\sim-\frac{1}{k_n}\and \phi_f\left(\xniota\right)=\phi_f\left(f^{\circ k_n}(z)\right)+\o(1).\]

\subsubsection{Entering the attracting petal}

Choose $\kappa_0\geq 1$ sufficiently large so that
\[f^{\circ \kappa_0}(C_f)\subset \Pa.\]
For every fixed $k\geq 0$, the sequence of polynomials $(f_{w_{n^2+k}})_{n\geq 0}$ converges locally uniformly to $f$.
It follows that for every  $k\in [1,\kappa_0]$, the sequence $\f_{n^2+k, n^2}$ converges uniformly to $f^{\circ k}$ on $C_f$.
In particular, if $n$ is large enough, then
\[
\f_{n^2+k, n^2}(C_f)\subset \bpf\text{ for }k\in [1,\kappa_0] , \text{ and }
\f_{n^2+\kappa_0, n^2}(C_f)\subset \Pa.\]
In addition, since $\f_{n^2+\kappa_0, n^2}(z)$ is close to $f^{\kappa_0}(z)$, then for large $n$   we also have
\begin{equation}\label{kappa0}
k_n>\frac{10}{\bigl|\f_{n^2+\kappa_0, n^2}(z)\bigr|}\text{ for }z\in C_f.
\end{equation}

\subsubsection{The orbit remains in the attracting petal}
We now prove that if $n$ is large enough and $k\in [\kappa_0,k_n]$, then $\f_{n^2+k, n^2}(C_f)\subset \Pa$.

For this purpose, we work in the coordinate $Z=-1/z$. For $m\geq 0$,
consider the rational map $F_m$ defined by
\[F_m(Z) := -\frac{1}{f_{w_m}(-1/Z)} = F(Z) -\frac{\pi^2w_m\cdot [F(Z)]^2}{4+\pi^2 w_m \cdot F(Z)}.\]
This has to be understood as a perturbation of $F$. Notice however that the remainder term $F_m(Z) -F(Z)$ is {\em not}
negligible with respect to $F(Z)$ as $Z\cv\infty$, so we have to control precisely for which values of $Z$ the remainder is indeed small.

Since $F(Z)\sim Z$ as $z\to \infty$ and since $w_{n^2+k}\in \O(1/n^2)$ for $k\in [0,k_n]$, we get
\[\sup_{|Z|=R} \bigl|F_{n^2+k}(Z) - F(Z)\bigr| = \o(1) \text{ and } \sup_{|Z|=2k_n} \bigl|F_{n^2+k}(Z) - F(Z)\bigr|  = \O\left(\frac{k_n^2}{n^2}\right)  =  \o(1).\]
In particular, according to the Maximum Principle and the choice of $R$ - see (\ref{eq:choiceR}) - for $n$ large enough, if $k\in [0,k_n]$ then
\[\sup_{R<|Z|<2k_n} \bigl|F_{n^2+k}(Z) - Z- 1\bigr|<\frac{1}{10}. \]

An easy induction on $k$ shows that for every $k\in [\kappa_0,k_n]$ and every  $z\in C_f$,
\begin{equation}\label{eq:Zm}
\begin{aligned}
-\frac{1}{\f_{n^2+k, n^2}(z)}
&\in \overline D\left(-\frac{1}{\f_{n^2+\kappa_0, n^2}(z)}+k-\kappa_0,\frac{k-\kappa_0}{10}\right)\\
& \quad \quad \subset \bigl\{Z\in \C~;~\Re(Z)>R\text{ and } |Z|<2k_n\bigr\}.
\end{aligned}
\end{equation}
Indeed, the induction hypothesis clearly holds for $k=\kappa_0$ and if it holds for some $k\in [\kappa_0,k_n-1]$, then
\begin{align*}
-\frac{1}{\f_{n^2+k+1, n^2}(z)}& =F_{n^2+k}\left(-\frac{1}{\f_{n^2+k, n^2}(z)}\right)
 \in \overline D\left(-\frac{1}{\f_{n^2+k, n^2}(z)}+1,\frac{1}{10}\right)\\
&\quad \quad \subset \overline D\left(-\frac{1}{\f_{n^2+\kappa_0, n^2}(z)}+k-\kappa_0+1,\frac{k-\kappa_0}{10}+\frac{1}{10}\right).
\end{align*}
If $Z$ belongs to the  latter disk, then
\[\Re(Z) > \Re\left(-\frac{1}{\f_{n^2+\kappa_0, n^2}(z)}\right)+k-\kappa_0+1-\frac{k-\kappa_0+1}{10}  > R + \frac{9}{10}(k-\kappa_0+1)>R\]
and using (\ref{kappa0}),
\[|Z| < \left|-\frac{1}{\f_{n^2+\kappa_0, n^2}(z)}\right| + k-\kappa_0+1 + \frac{k-\kappa_0+1}{10} < \frac{1}{10}k_n + \frac{11}{10} k_n <2k_n.\]
This shows that $\f_{n^2+k, n^2}(z)\in \Pa$ for all $k\in [\kappa_0,k_n]$ and all $z\in C_f$.

\subsubsection{Working in attracting Fatou coordinates}

We finally prove that for $k\in [0,k_n]$
\[\phi_f\bigl(\f_{n^2+k, n^2}(z_n)\bigr) = \phi_f\bigl(f^{\circ k}(z)\bigr)+\o(1).\]
This is clear for $k\in [1,\kappa_0]$ since for each fixed $k$, the sequence $(\f_{n^2+k, n^2})$ converges uniformly to $f^{\circ k}$ on $C_f$.
So  it is enough to prove the  estimate for $k\in [\kappa_0,k_n]$.

We have that
$\phi'_f(z)\sim1/z^2$ as $z\to 0\text{ in }\Pa.$
Also, we saw in \eqref{eq:Zm}
 that for $k\in [\kappa_0,k_n]$,
\[\left|\frac{1}{\f_{n^2+k, n^2}(z_n)}\right|  \leq 2k_n.\]
It follows that for $k\in [\kappa_0,k_n-1]$ and $z\in C_f$,
\[\sup_{I_k}|\phi'_f|\in \O(k_n^2)\with  I_k:=\bigl[f\circ \f_{n^2+k, n^2}(z),\f_{n^2+k+1, n^2}(z)\bigr],\]
whence
\begin{align*}
\phi_f\bigl(\f_{n^2+k+1, n^2}(z) \bigr)&= \phi_f\left(f\bigl(\f_{n^2+k, n^2}(z)\bigr) + \frac{\pi^2}{4} w_{n^2+k}\right) \\
&= \ds \phi_f\circ f\bigl(\f_{n^2+k, n^2}(z)\bigr) +w_{n^2+k}\cdot
\sup_{I_k}  |\phi_f'|\cdot \O(1) \\
&= \phi_f\bigl(\f_{n^2+k, n^2}(z)\bigr)+ 1 + \O\left(\frac{k_n^2}{n^2}\right).
\end{align*}
As a consequence, for $k\in [\kappa_0,k_n]$,
\begin{align*}
\phi_f\bigl(\f_{n^2+k, n^2}(z) \bigr)& =\phi_f\bigl(\f_{n^2+\kappa_0, n^2}(z) \bigr) +k-\kappa_0 + \O\left(\frac{k_n^3}{n^2}\right) \\
&= \phi_f\bigl(f^{\circ \kappa_0}(z) \bigr) + k -\kappa_0+ \o(1)\\
& = \phi_f\bigl(f^{\circ k}(z) \bigr) + \o(1),
\end{align*}
where the second equality follows from the estimate
$ \f_{n^2+\kappa_0, n^2}(z) =  f^{\circ \kappa_0}(z) + \o(1) $ and the fact that
 $k_n^3=\O\left(n^{3\alpha}\right)$ since $\alpha<2/3$.

Taking $k=k_n$, we conclude that
\[\phi_f(\xniota) = \phi_f\bigl(\f_{n^2+k_n, n^2}(z)\bigr) + \o(1) = \phi_f(z) + k_n + \o(1)= k_n+\O(1)\]
and so, $\xniota \sim -1/\phi_f(\xniota)\sim -1/k_n$
as required. The proof of Proposition \ref{entering} is completed. \qed

\subsection{Passing through the eggbeater\label{sec:passing}}

In this paragraph, we prove Proposition~\ref{passing}, that is: if $\left(\xniota\right)_{n\geq 0}$ is a sequence such that $\xniota\sim -1/k_n$ and if
 \[\xno:=\f_{(n+1)^2-k_n,n^2+k_n}\left(\xniota\right),\]
then, as $n\to +\infty$,
\[\xno\sim \frac{1}{k_n} \and \psi_f^{-1}\left(\xno\right) = \phi_f\left(\xniota\right) + n\cdot \left(\sum_{m=n^2+k_n}^{n^2+2n-k_n} \sqrt{w_m}\right) -2n + \o(1).\]
The proof relies on the formalism of  approximate Fatou coordinates introduced in Section \ref{sec:approxcoord}  and notation thereof  (in particular   Properties \ref{compatt}, \ref{comprep} and \ref{approx}).
Figure \ref{quasitrans} illustrates the proof.


\begin{figure}[htbp]
\centering
\def\svgwidth{10cm}
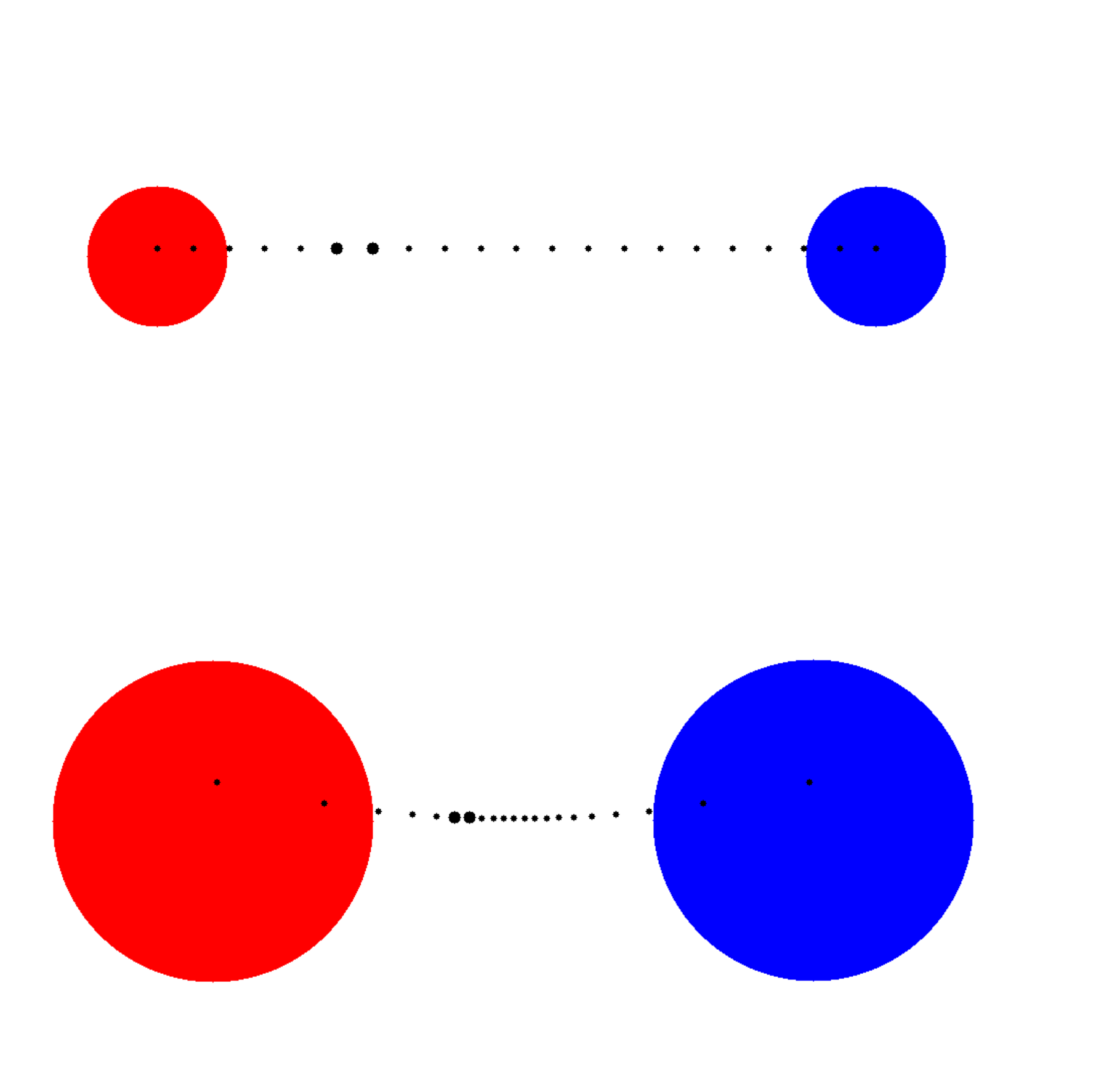
\caption{\small{ The map $\varphi_{w_{m+1}}\circ f_{w_m}\circ \varphi_{w_m}^{-1}$ is close to a translation by $\frac{\sqrt{w_m}}{2}$. }\label{quasitrans}}
\end{figure}

\begin{proof}
Let  $\uniota:=w_{n^2+k_n}$ which belongs to  $ \Pg$ for $n$ large enough.

\medskip
\noindent{\bf Step 1.}
If $\xniota\sim -1/k_n$, then for $n$ large enough, $\xniota\in \Pa$. Set $Y_n:=\phi_f(\xniota)$ and note that
\[Y_n\sim -\frac{1}{\xniota}\sim k_n\sim n^\alpha\sim  |\uniota|^{-\alpha/2}
 \whence Y_n\in D_{\uniota}^\att\for n \text{ large enough}.\]
According to Property \ref{compatt}, for $n$ large enough,  $\xniota=\phi_f^{-1}(Y_n)\in V_{\uniota}$ and
\begin{equation}\label{phiyn}
\frac{2}{\sqrt{\uniota}}\cdot \varphi_{\uniota}(\xniota) = \frac{2}{\sqrt{\uniota}}\cdot \varphi_{\uniota}\circ \phi_f^{-1}(Y_n) = Y_n+\o(1) = \phi_f(\xniota)+\o(1).
\end{equation}

\medskip
\noindent{\bf Step 2.}
We will now prove by induction on $m$ that for all $m\in  [n^2+k_n,(n+1)^2-k_n]$,
\[\f_{m,n^2+k_n}(\xniota)\in V_{w_m}\]
and
\[\mz_m:=\varphi_{w_m}\circ \f_{m,n^2+k_n}(\xniota)  = \varphi_{w_n}(\xniota) +\sum_{j=n^2+k_n}^{m-1}\left(\frac{\sqrt{w_m}}{2} +\o\left(\frac{1}{n^2}\right)  \right).\]
Indeed, for $m=n^2+k_n$, we have that $w_m=\uniota$ and according to Step 1,
\[\f_{m,n^2+k_n}(\xniota)=\xniota\in V_{\uniota}=V_{w_m},\]
so the induction hypothesis  holds in this case.

Now, assume the induction hypothesis holds for some $m\in [n^2+k_n,(n+1)^2-k_n-1]$. According to Step 1,
\begin{equation}\label{eq:varphivn}
\varphi_{\uniota}(\xniota)  =  \frac{\sqrt{\uniota}}{2}\cdot \bigl(\phi_f(\xniota)+\o(1)\bigr) = \frac{k_n}{2n}+\o\left(\frac{k_n}{n}\right).
\end{equation}
In addition,
\[
\sqrt{w_{m}}  = \frac{1}{\sqrt{n^2+\O(n)}} = \frac{1}{n}+\O\left(\frac{1}{n^2}\right).
\]
It follows that
\[\mz_{m} = \frac{k_n}{2n}+\o\left(\frac{k_n}{n}\right) + (m-n^2-k_n)\cdot \left(\frac{1}{2n} +\O\left(\frac{1}{n^2}\right) \right)= \frac{m-n^2}{2n} + \o\left(\frac{k_n}{n}\right),\]
and therefore
\[\frac{k_n}{2n} +\o\left(\frac{k_n}{n}\right) \leq\Re(\mz_{m}) \leq 1-\frac{k_n}{2n} + \o\left(\frac{k_n}{n}\right)\and \Im(\mz_m)=\o(1).\]
Since $r_{w_m} = |w_m|^{(1-\alpha)/2}\sim k_n/n$, we see that for large $n$ , $\mz_m\in \mR_{w_m}$.
According to Property \ref{approx},
\[\f_{m+1,n^2+k_n}(\xniota) = f_{w_m}\circ \f_{m,n^2+k_n}(\xniota)\in V_{w_{m+1}}\]
and
\begin{align*}
\mz_{m+1} & = \phi_{w_{m+1}}\circ f_{w_m}\circ \phi_{w_m}^{-1}(\mz_m )
= \mz_m+ \frac{\sqrt{w_m}}{2} + \o(w_m)\\
& = \varphi_{\uniota}(\xniota) +\sum_{j=n^2+k_n}^{m-1}\left(\frac{\sqrt{w_j}}{2} + \o\left(\frac{1}{n^2}\right) \right)
+  \frac{\sqrt{w_m}}{2} + \o\left(\frac{1}{n^2}\right)\\
&= \varphi_{\uniota}(\xniota) + \sum_{j=n^2+k_n}^{m}\left(\frac{\sqrt{w_j}}{2} + \o\left(\frac{1}{n^2}\right)\right).
\end{align*}

\medskip\noindent{\bf Step 3.}
We  now specialize to the case $m:=(n+1)^2-k_n$ and set
\[\uno:=w_{(n+1)^2-k_n}\and
\xno:=\f_{(n+1)^2-k_n,n^2+k_n}(\xniota).\]
According to Step 2 of the proof, $\xno\in  V_{\uno}$ and
\begin{equation}\label{eq:varphiunbis}
 \varphi_{\uno}(\xno)  = \varphi_{\uniota}(\xniota) +\sum_{j=n^2+k_n}^{n^2+2n-k_n}\left(\frac{\sqrt{w_j}}{2} +\o\left(\frac{1}{n^2}\right)  \right).
 \end{equation}
In particular, by using \eqref{eq:varphivn} and Proposition \ref{eps} we get
\begin{equation}\label{eq:varphiun}
\varphi_{\uno}(\xno)  = \frac{k_n}{2n} + \o\left(\frac{k_n}{n}\right) + 1- \frac{k_n}{n} + \o\left(\frac{1}{n}\right) = 1-\frac{k_n}{2n} + \o\left(\frac{k_n}{n}\right)
\end{equation}
Set
\[X_n := \frac{2}{\sqrt{\uno}}\cdot \bigl(\varphi_{\uno}(\xno) - 1\bigr)\sothat
\varphi_{\uno}(\xno) = 1+ \frac{\sqrt{\uno}}{2} \cdot X_n.\]
Since $2/\sqrt{\uno}\sim 2n$, from \eqref{eq:varphiun} we deduce that
$X_n= -k_n \cdot (1+\o(1))$. Since in addition
$k_n \sim (\uno)^{(1-\alpha)/2}$ it follows that for   $n$ large enough,
$X_n\in D_{\uno}^\rep$.
Thus we compute
\begin{align*}
\psi_f^{-1}(\xno)
&=\psi_f^{-1}\circ \varphi_{\uno}^{-1}\left(1+\frac{\sqrt{\uno}}{2}\cdot X_n\right)
=  X_n + \o(1)\\ 
&= \frac{2}{\sqrt{\uno}}\cdot \left( \varphi_{\uniota}(\xniota) +\left(\sum_{j=n^2+k_n}^{n^2+2n-k_n}\frac{\sqrt{w_j}}{2} \right)+\o\left(\frac{1}{n}\right)-1\right)\\
&=  \frac{2}{\sqrt{\uno}}\cdot \left( \frac{\sqrt{\uniota}}{2}\bigl( \phi_f(\xniota)+ \o(1)\bigr)
 +\left(\sum_{j=n^2+k_n}^{n^2+2n-k_n}\frac{\sqrt{w_j}}{2} \right)+\o\left(\frac{1}{n}\right)-1\right)\\
 &= \phi_f(\xniota) + 2n\cdot \left(\sum_{j=n^2+k_n}^{n^2+2n-k_n}\frac{\sqrt{w_j}}{2} \right) - 2n + \o(1)\\
\end{align*}
where we deduce the first line by Property~2, the second line follows by~\eqref{eq:varphiunbis}, the third line holds thanks to Property~1, and the final line holds since~$\uniota\sim \uno\sim \frac{1}{n^2}$. This completes the proof.
\end{proof}

\subsection{Leaving the eggbeater\label{sec:leaving}}

In this paragraph, we prove Proposition \ref{leaving}, that is: if
$\left(\xno\right)_{n\geq 0}$ is a sequence contained in the repelling petal $\Pr$ and if
\[\psi_f^{-1}\left(\xno\right) = -k_n+\O(1),\]
then, as $n\to +\infty$,
 \[\f_{(n+1)^2,(n+1)^2-k_n}\left(\xno\right)
 = f^{\circ k_n}\left(\xno\right)+\o(1).\]
Set $\xo_{n,0}:=\xno$ and for $k\in [1,k_n]$, set
\[\xo_{n,k}:= \f_{(n+1)^2-k_n+k,(n+1)^2-k_n}\left(\xno\right)\]

\subsubsection{Within the repelling petal}

Let $\kappa_1$ be an integer such that for all $n\geq 0$,
\[\Re\bigl(\psi_f^{-1}(\xno)\bigr)+k_n+R<\kappa_1.\]
We prove by induction on $k$ that for $n$ large enough, if  $k\in [0,k_n-\kappa_1]$, then
\[\xo_{n,k}\in \Pr\and  \psi_f^{-1}(\xo_{n,k})= \psi_f^{-1}(\xno)+k+k \cdot \O\left(\frac{k_n^2}{n^2}\right).\]

First, the induction hypothesis clearly holds for $k=0$. So, let us assume it is true for some $k\in [0,k_n-\kappa_1-1]$.
As in Proposition \ref{entering}, since $\alpha<2/3$ and   $k\leq k_n$, $k\O(k_n^2/n^3) \subset  \o(1).$
It follows that for large $n$,
\[\Re\bigl( \psi_f^{-1}(\xo_{n,k}) \bigr) = \Re\bigl(\psi_f^{-1}(\xno)\bigr)+k+\o(1) <k+ \kappa_1-k_n-R\leq -R-1.\]
Since $\xo_{n,k+1} = f_{w_{(n+1)^2-k_n+k}}(\xo_{n,k}) = f(\xo_{n,k})+\o(1)$ and
$f\circ \psi_f = \psi_f\circ T_1$, taking  $n$ larger if necessary,
$\xo_{n,k+1} $ belongs to the repelling petal $\Pr$.

Next, since
\[(\psi_f^{-1})'(z)\sim \frac{1}{z^2}\as z\to 0\text{ in }\Pr,\]
 as in Proposition \ref{entering} we see that
\begin{align*}
\psi_f^{-1}(\xo_{n,k+1}) &= \psi_f^{-1}\left(f(\xo_{n,k}) + \O\left(\frac{1}{n^2}\right)\right) \\
& = \psi_f^{-1}\circ f(\xo_{n,k}) +   \O\left( \frac{1}{n^2 \big|\xo_{n,k}\big|^2 }\right)  \\
&=  \psi_f^{-1}(\xo_{n,k})+1 + \O\left(\frac{k_n^2}{n^2}\right)\\
& =\psi_f^{-1}(\xno)+k+1+(k+1)\cdot \O\left(\frac{k_n^2}{n^2}\right),
\end{align*}
and the proof of the induction is completed.

\subsubsection{Leaving the repelling petal}

By the previous step we  have that
\[\psi_f^{-1}(\xo_{n,k_n-\kappa_1}) = \psi_f^{-1}(\xno) + k_n-\kappa_1 + \o(1) =  \psi_f^{-1}\circ f^{\circ k_n-\kappa_1}(\xno) + \o(1).\]
Applying $\psi_f$ on both sides yields
\[\xo_{n,k_n-\kappa_1} = f^{\circ k_n-\kappa_1}(\xno)+\o(1).\]
Since the sequence of polynomials $\f_{(n+1)^2,(n+1)^2-\kappa_1}$
converges locally uniformly to $f^{\circ \kappa_1}$, we deduce that
\[\xo_{n,k_n} = \f_{(n+1)^2,(n+1)^2-k_n}\left(\xno\right)=f^{\circ k_n}(\xno) + \o(1),\]
thereby concluding the proof of Proposition \ref{leaving}.\qed

\section{A Lavaurs map with an attracting fixed point\label{sec_lavaurs}}

This section is devoted to the proof of
 Proposition \ref{attracting}. Given $a\in \C$, let $f_a:\C\to \C$ be the cubic polynomial defined by
\[f_a(z) = z+z^2+az^3.\]
We must
 show that if $r>0$ is sufficiently close to $0$ and $a\in D(1-r,r)$, then the Lavaurs map $\L_{f_a}:\bpf\to \C$
 admits an attracting fixed point. Notice that since $\L_{f_a}$ commutes  with ${f_a}$, it therefore  has  infinitely many of them.

\begin{figure}[htbp]
\centerline{
\framebox{\includegraphics[height=8.5cm]{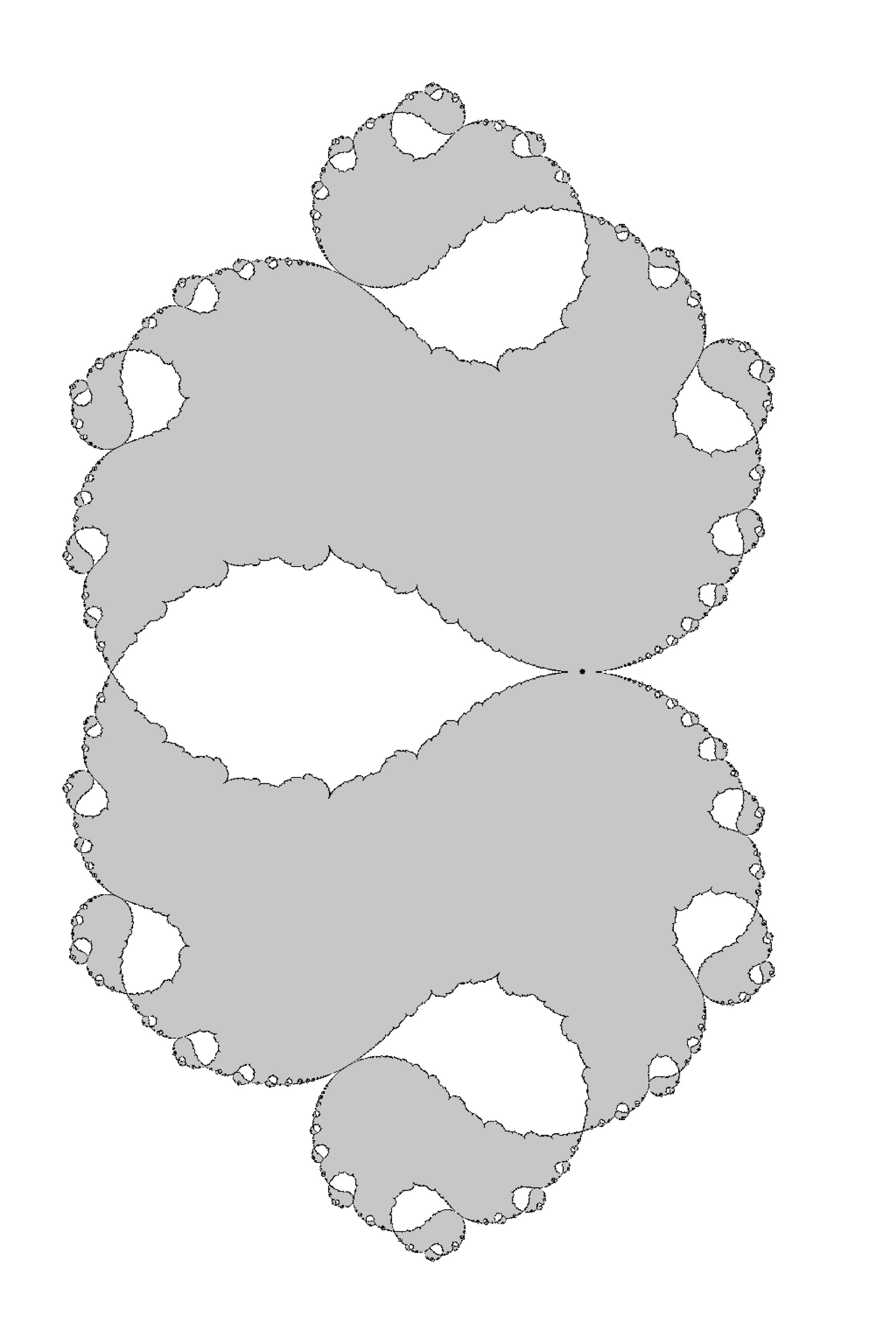}}\quad
\framebox{\includegraphics[height=8.5cm]{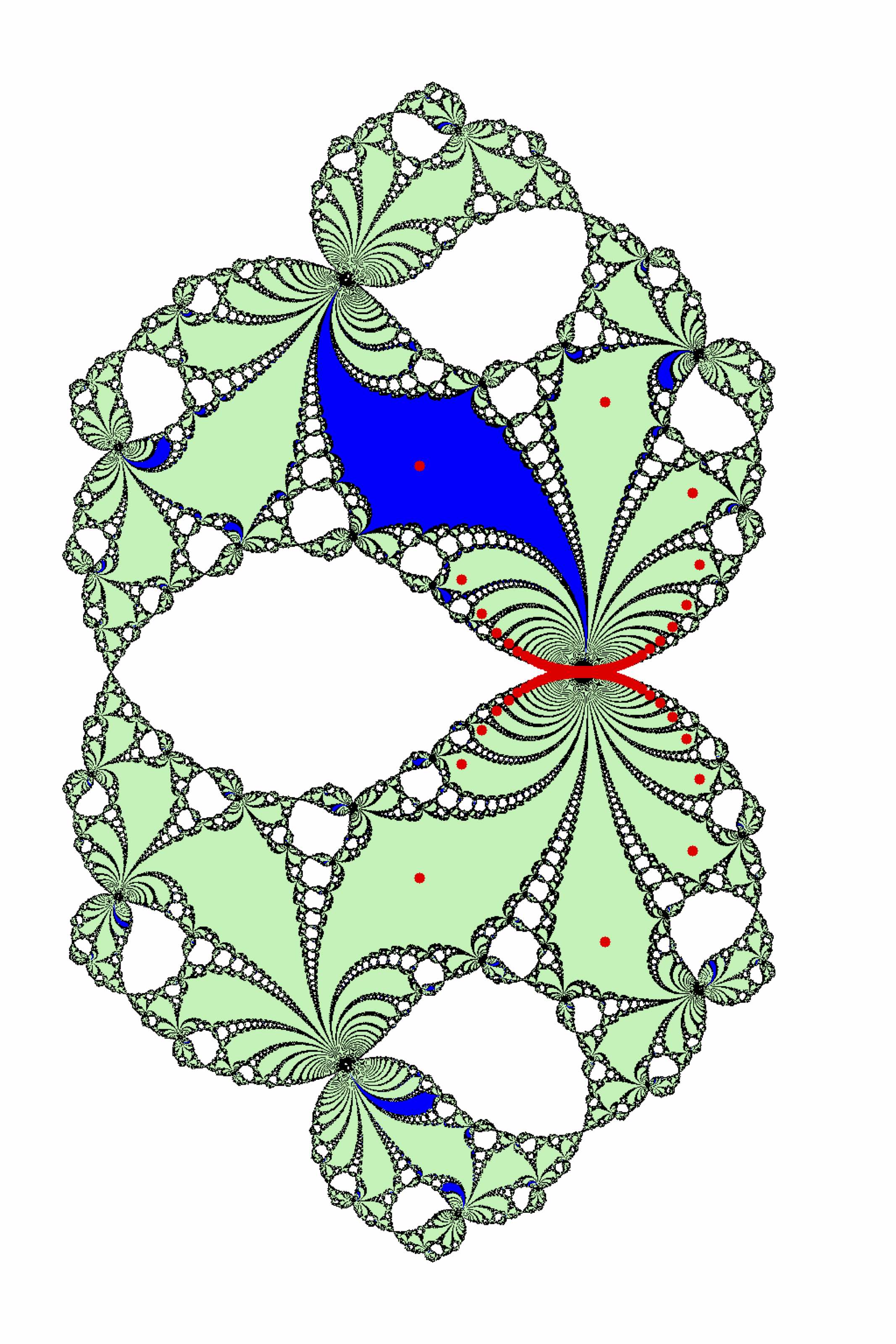}}
}
\caption{\small Behavior of $\L_f$ for $f(z) = z+ z^2 + 0.95z^3$.
Left: the set of points $z\in \bpf$ whose image by $\L_f$ remains in $\bpf$. The restriction of $\L_f$ to the bounded white domains is a covering above
$\C\setminus \overline\bpf$. Right:  the Lavaurs map $\L_f$ has  two complex conjugate sets of attracting fixed points. The fixed points of $\L_f$ are indicated as red points and their basins of attraction are colored (blue for one of the fixed points, and green for the others).\label{fig:0.95}}
\end{figure}

Set
\[{\cal U}_a:=\psi_{f_a}^{-1}({\cal B}_{f_a})\and \E_a:=\phi_{f_a}\circ \psi_{f_a}:{\cal U}_a\to \C.\]
This is an open set containing   an upper half-plane and   a lower half-plane.
Note that $\psi_{f_a}:{\cal U}_a\to {\cal B}_{f_a}$ semi-conjugates $\E_a$ to $\L_{f_a}$. Since
$\psi_{f_a}$ is univalent in a left half-plane, it is enough to show that $\E_a$ has an attracting fixed point
with real part arbitrarily close to $-\infty$. Since $\E_a$ commutes with the translation by $1$,
it is therefore enough to show that $\E_a:{\cal U}_a\to \C$ has an attracting fixed point.

The open set ${\cal U}_a$ is invariant by $T_1$ and the map  $\E_a$ commutes with $T_1$. The set
\[{\cal U}:=\bigl\{(a,Z)\in \C\times \C~;~Z\in {\cal U}_a\bigr\}\]
is an open subset of $\C^2$ and the map
\[\E:{\cal U}\ni(a,Z)\mapsto \E_a(Z)\in \C\]
is analytic. The universal cover \[\exp:{\C}\ni Z\mapsto \e^{2\pi \i Z}\in \C\setminus \{0\}\]
semi-conjugates $\E_a$ to a map
\[e_a:U_a\to \C\setminus\{0\}\with U_a:=\exp({\cal U}_a)\subset \C\setminus\{0\}.\]
The open set $U_a$ is a neighborhood of $0$ and $\infty$ in $\C\setminus \{0\}$.
The map $e_a$ has removable singularities at $0$ and $\infty$, see the proof of Lemma \ref{multhorn} below, thus it extends as a map
$e_a:\widehat U_a\to \widehat \C$, where $\Chat:=\C\cup \{\infty\}$ is the Riemann sphere and
$\widehat U_a:=U_a\cup\{0,\infty\}\subset \Chat$. We set
\[\widehat U:=\left\{(a,z)\in \C\times \Chat~;~z\in \widehat U_a\right\}.\]

\begin{lemma}\label{multhorn}
The points $0$ and $\infty$ in $\Chat$ are fixed points of $e_a:\widehat U_a\to \Chat$. Both fixed points have multiplier $\e^{2\pi ^2(1-a)}$.
\end{lemma}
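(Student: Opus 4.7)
The map $\mathcal{E}_a$ commutes with $T_1$, so the function
\[
h(Z) \;:=\; \mathcal{E}_a(Z) - Z
\]
is $T_1$-periodic on $\mathcal{U}_a$ and descends through $\exp$ to a holomorphic function $\tilde h : U_a \to \C$. From the identity
\[
e_a(u) \;=\; u \cdot \exp\bigl(2\pi i\, h(Z)\bigr), \qquad u = e^{2\pi i Z},
\]
the lemma reduces to computing the two limits $\ell_\pm := \lim_{\Im Z \to \pm\infty} h(Z)$: once these exist, Riemann's removable singularity theorem gives that $e_a$ extends holomorphically across $u = 0$ with $e_a(0) = 0$ and multiplier $e^{2\pi i \ell_+}$, and symmetrically that it extends across $u = \infty$, fixing it, with multiplier $e^{-2\pi i \ell_-}$ in the chart $v = 1/u$.

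\textbf{Formal computation.} I would compute $\ell_\pm$ by substituting the repelling normalization $-1/\psi_f(Z) = Z + (1-a)\log(-Z) + \o(1)$ into the attracting one $\phi_f(w) = -1/w - (1-a)\log(-1/w) + \o(1)$. Since $-1/\psi_f(Z) = Z\bigl(1 + \o(1)\bigr)$, one has $\log(-1/\psi_f(Z)) = \log Z + \o(1)$, and formally
\[
\mathcal{E}_a(Z) \;=\; Z + (1-a)\bigl(\log(-Z) - \log Z\bigr) + \o(1).
\]
With the branch of $\log$ vanishing at $1$ on $\C\setminus \R^-$, the bracket equals $-i\pi$ on the upper half-plane and $+i\pi$ on the lower half-plane. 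Therefore $\ell_+ = -i\pi(1-a)$ and $\ell_- = +i\pi(1-a)$, so both multipliers work out to
\[
e^{2\pi i \ell_+} \;=\; e^{-2\pi i \ell_-} \;=\; e^{2\pi^2(1-a)},
\]
as claimed.

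\textbf{Main obstacle.} The substitution above cannot be executed head-on: the $\psi_f$-asymptotic is stated as $\Re Z \to -\infty$, and in that regime $\psi_f(Z)$ sits in the repelling petal, \emph{not} in $\bpf$, so the $\phi_f$-asymptotic (which requires $\Re(-1/w) \to +\infty$) does not apply to $w = \psi_f(Z)$. The rigorous path is to exploit the $T_1$-equivariance $\mathcal{E}_a(Z+n) = \mathcal{E}_a(Z) + n$ together with the functional equation $\psi_f(Z+1) = f \circ \psi_f(Z)$: for $Z \in \mathcal{U}_a$ with $\Im Z$ large, write $\psi_f(Z) = f^{\circ N}\!\bigl(\psi_f(Z - N)\bigr)$ with $N$ chosen so that $\Re(Z-N)\to -\infty$; the point $\psi_f(Z-N) \approx -1/(Z-N)$ is then well controlled by the $\psi_f$-asymptotic, and one tracks the $N$ iterates of $f$ using the fact that $\phi_f \circ f = T_1 \circ \phi_f$ to reassemble $\mathcal{E}_a(Z)$, all error terms remaining $\o(1)$ uniformly in $N$. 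Equivalently, and more conceptually, one may view $\phi_f$ and $\psi_f^{-1}$ as two sectorial realizations of a single formal Fatou coordinate $\hat\Phi(z) = -1/z - (1-a)\log(-1/z) + \cdots$, the function $\mathcal{E}_a - \mathrm{id}$ being the monodromy of $\hat\Phi$ across the upper (resp. lower) horn around the parabolic point $0$; the log term contributes precisely the $\mp i\pi(1-a)$ displayed above, which is the well-known Écalle-Voronin horn-map multiplier $e^{2\pi^2(1-a)}$.
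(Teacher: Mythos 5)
Your formal computation is exactly the paper's own: assert the repelling asymptotic for $\Im(Z)\to+\infty$, substitute it into the attracting one, read off $\E_a(Z) = Z - \pi\i(1-a) + \o(1)$, hence multiplier $\e^{2\pi^2(1-a)}$ at $0$, and argue symmetrically at $\infty$. Where you differ is in the attention you pay to justifying the asymptotics: the paper simply asserts, without comment, that $-1/\psi_{f_a}(Z) = Z + (1-a)\log(-Z) + \o(1)$ holds as $\Im(Z)\to+\infty$ and then feeds the result into the attracting expansion, implicitly invoking the standard fact that both Fatou-coordinate asymptotics extend from the axis directions $\Re\to\pm\infty$ to sectors of opening close to $2\pi$, in particular to the horn directions $\Im\to\pm\infty$.

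You are right that, as stated in the appendix, the two asymptotics do not literally overlap, and that this deserves a word. However, your first workaround does not close the gap as written: shifting by an integer $N$ so that $\Re(Z-N)\to-\infty$ gives control of $\psi_f(Z-N)$, but after $N$ iterates of $f$ you are back at $\psi_f(Z)$, still in the horn region, where the \emph{stated} $\phi_f$-asymptotic is not available; moreover $\E_a\circ T_1 = T_1\circ\E_a$ is automatic, so the shift alone is circular. To make it rigorous you would need to keep iterating until the orbit enters the attracting petal $\Pa$ (where the stated $\phi_f$-asymptotic does apply) and control the drift of $-1/f^{\circ k}(\cdot)$ along the whole gate passage; that is precisely the eggbeater estimate, and carrying it out amounts to re-proving the extended-sector asymptotic. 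Your \'Ecalle--Voronin reformulation is correct but is a restatement of the result, not an independent proof.
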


\begin{proof}
As $\Im(Z)\to +\infty$, we have that
\[-\frac{1}{\psi_{f_a}(Z)} = Z + (1-a)\log(-Z)+\o(1)\]
where $\log$ is the principal branch of logarithm.
Note that $\log(-Z) = \log(Z)-\pi\i$ as $\Im(Z)\to +\infty$. Thus,
\begin{align*}
\E_a(Z) = \phi_{f_a}\circ \psi_{f_a}(Z) &= Z + (1-a)\log(-Z)+\o(1)\\
&\quad\quad\,-(1-a)\log\bigl(Z + (1-a)\log(-Z)+\o(1)\bigr) + \o(1)\\
&= Z +(1-a)\log(Z)-\pi\i(1-a)-(1-a)\log(Z) + \o(1)\\
&= Z-\pi\i(1-a)+\o(1).
\end{align*}
As a consequence, as $z=\exp(Z)\to 0$, we have that
\[e_a(z) = \e^{2\pi \i Z}\cdot \e^{2\pi^2(1-a)+\o(1)} =  e^{2\pi^2(1-a)}z\cdot \bigl(1+\o(1)\bigr),\] thus we conclude that
 $0$ is a fixed point of $e_a$ with multiplier $\e^{2\pi^2(1-a)}$. A similar argument shows
  that $\infty$ is also a fixed point of $e_a$ with multiplier $\e^{2\pi^2(1-a)}$.
\end{proof}


In particular, we see that for $a=1$, the map $e_1$ has multiple fixed points at $0$ and $\infty$.

\begin{lemma}
The multiplicity of $0$ and $\infty$ as fixed points of $e_1$ is $2$.
\end{lemma}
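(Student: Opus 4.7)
The plan is to compute the quadratic Taylor coefficient $c_2$ of $e_1(z)=z+c_2 z^2+\O(z^3)$ at the fixed point $0$ and show $c_2\neq 0$; the case of $\infty$ will follow by the symmetric argument on the other horn (as $\Im Z\to-\infty$). By Lemma~\ref{multhorn} we already have $e_1(0)=0$ and $e_1'(0)=1$, so $c_2\neq 0$ is equivalent to the multiplicity being exactly $2$.

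\emph{Step 1: reduction to a Fourier coefficient.} The semi-conjugacy $e_1\circ\exp=\exp\circ\E_1$ combined with $\E_1(Z+1)=\E_1(Z)+1$ shows that $h(Z):=\E_1(Z)-Z$ is $1$-periodic in $Z$. Specializing the proof of Lemma~\ref{multhorn} to $a=1$ (in which case the constant $-i\pi(1-a)$ vanishes) gives $h(Z)\to 0$ as $\Im Z\to+\infty$, so $h$ admits a convergent Fourier expansion
\[
h(Z)=\sum_{n\geq 1}b_n\, e^{2\pi i n Z}\quad\text{for }\Im Z\text{ sufficiently large}.
\]
Substituting $z=e^{2\pi i Z}$ in $e_1(z)/z=\exp(2\pi i h(Z))=1+2\pi i\, b_1\, z+\O(z^2)$ yields $c_2=2\pi i\, b_1$, so the task reduces to showing $b_1\neq 0$.

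\emph{Step 2: non-vanishing of $b_1$.} This will be the main obstacle. The estimate $\E_1(Z)=Z+\o(1)$ from Lemma~\ref{multhorn} is far too crude: at $a=1$ every $(1-a)^k$ term in the expansion vanishes, and $h(Z)$ is genuinely exponentially small as $\Im Z\to+\infty$. To extract the leading Fourier coefficient $b_1$ I would refine the asymptotic matching of $\phi_{f_1}$ and $\psi_{f_1}^{-1}$ beyond their common formal expansion, capturing the Stokes-type discrepancy between these two actual realizations of the formal Fatou coordinate; up to an explicit non-zero universal factor, $b_1$ coincides with the first Stokes constant of the parabolic germ $f_1(z)=z+z^2+z^3$. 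The non-vanishing of this constant is the general statement that a non-degenerate multiplicity-$2$ parabolic germ does not embed in a local holomorphic flow, equivalently that its horn map is not a translation. Alternatively, were all $b_n$ to vanish, $\E_1$ would be the identity on its domain, forcing $\psi_{f_1}|_{\mathcal{U}_1}$ to be a holomorphic section of the universal-cover-type map $\phi_{f_1}:\bpf\to\C$, which can be ruled out by direct inspection of the global structure of $\bpf$.
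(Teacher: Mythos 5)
Your Step 1 is a correct reduction: setting $h(Z):=\E_1(Z)-Z$, this is $1$-periodic, and when $a=1$ the constant $-\pi\i(1-a)$ in the proof of Lemma~\ref{multhorn} vanishes, so $h(Z)\to 0$ as $\Im Z\to+\infty$ and $h(Z)=\sum_{n\geq 1}b_n\,\e^{2\pi\i nZ}$. Then indeed $e_1(z)=z\exp\bigl(2\pi\i\, h(Z)\bigr)=z+2\pi\i\,b_1\,z^2+\O(z^3)$, so the multiplicity at $0$ is exactly $2$ if and only if $b_1\neq 0$.

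The gap is in Step 2, and it is not cosmetic. Both of your proposed arguments there prove a strictly weaker statement than what you need. The assertion that $f_1$ does not embed in a local holomorphic flow, equivalently that $\E_1$ is not a translation, is the assertion that $h\not\equiv 0$, i.e.\ that \emph{some} Fourier coefficient $b_n$ is nonzero. You need the \emph{first} coefficient $b_1$ to be nonzero: if $b_1=0$ but, say, $b_2\neq 0$, then $\E_1\neq\id$ and $f_1$ still does not embed in a flow, yet the multiplicity at $0$ would be $3$, not $2$. There is no general principle forcing $b_1\neq 0$ for a parabolic germ $z+z^2+\O(z^3)$: the horn map is essentially a free parameter in the Écalle--Voronin classification, and germs with $b_1=0$, $b_2\neq 0$ do exist. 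So ``the first Stokes constant is nonzero'' would have to be verified for the specific germ $f_1(z)=z+z^2+z^3$, which amounts to an honest (and nontrivial) computation that your sketch does not supply.

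For comparison, the paper's proof avoids any computation of $b_1$ by a counting argument: $e_1$ is a finite-type map on $\Chat$, every attracting petal at $0$ or $\infty$ must absorb a singular value, and Proposition~\ref{prop:sing} shows $e_1$ has exactly two critical values (coming from the two critical points of $f_1$ in $\bpf$) plus the two fixed asymptotic values $0$ and $\infty$. Hence the total number of attracting petals at $0$ and $\infty$ is at most $2$; since each of $0$ and $\infty$ is a fixed point of multiplier $1$ (hence has at least one petal), each has exactly one petal and multiplicity $2$. Note this is exactly the information you were missing: it rules out not only ``no petals'' but also ``two or more petals'' at either point, which is the content of $b_1\neq 0$. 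If you want to salvage your route, you would need either an explicit evaluation of the Stokes constant of $f_1$, or to import the finite-type/singular-value count, at which point you have essentially reproduced the paper's argument.
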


\begin{proof}
The mapping $e_1:\widehat U_1\to \Chat$ is a finite type analytic map in the sense of  Epstein (see Appendix \ref{sec:finitetype}). Therefore
each attracting petal at $0$ or at $\infty$ must attract the infinite orbit
of a singular value of $e_1$. Indeed if not, the component $B$ of the immediate basin containing this petal would avoid the singular values of $e_1$. The restriction $e_1:B\to B$ would then
be a covering and the corresponding attracting Fatou coordinate would extend to a covering map $\phi:B\to \C$. This would force $B$ to be isomorphic to $\C$, which is not possible since $B$ is contained in $\C\setminus \{0\}$.

\medskip

According to Proposition \ref{prop:sing}, the finite type map
 $e_1$ admits exactly two critical values (the images of the critical values of $f_1$ under the map
$\exp\circ \phi_{f_1}$) and two singular values which are respectively fixed at $0$ and $\infty$.
It follows that the number of attracting petals at $0$ plus the number of attracting petals at $\infty$
is at most $2$. So this number must be equal to $2$ and the result follows.
\end{proof}

As we perturb $a$ away from $1$, the multiple fixed point at $0$ splits into a pair of fixed points of $e_a$: one at $0$ with
multiplier $\e^{2\pi^2(1-a)}$ and another one   denoted by $\xi(a)$, with multiplier $\rho(a)$.
We  use a classical residue computation to estimate this multiplier.
Let $\gamma$ be a small loop around $0$. The Cauchy Residue Formula yields
\[\frac{1}{1-\e^{2\pi ^2(1-a)}}+ \frac{1}{1-\rho(a)} = \frac{1}{2\pi \i} \int_\gamma \frac{{\rm d} z}{z-e_a(z)} \underset{a\to 1}\longrightarrow \frac{1}{2\pi \i} \int_\gamma \frac{{\rm d} z}{z-e_1(z)}\in \C.\]
From this it follows that
\[\frac{1}{1-\rho(a)} =  \frac{1}{2\pi^2(1-a)} + \O(1)\as a\to 1.\]
Now observe that
\[\rho(a)\in \D \quad\Longleftrightarrow \quad\Re\left( \frac{1}{1-\rho(a)} \right)>\frac{1}{2},\]
and similarly,
\[a\in D(1-r,r)\quad\Longleftrightarrow \quad \Re\left(\frac{1}{2\pi ^2(1-a)}\right)>\frac{1}{4\pi^2 r}.\]
As a consequence when $r>0$ is sufficiently close to $0$ and $a\in D(1-r,1)$, we deduce
 that    $\bigl|\rho(a)\bigr|<1$, so $\xi(a)$ is an attracting fixed point.

Let finally $Z(a)$ be a preimage of $\xi(a)$ under $\exp$, that is  $\exp\bigl(Z(a)\bigr) = \xi(a)$.
We claim that for $a$ sufficiently close to $1$, $Z(a)$ is a fixed point of $\E_a$. Indeed observe first  that
$\E_a\bigl(Z(a)\bigr)-Z(a)$ is an integer which does not depend on the choice of preimage $Z(a)$.
Therefore it is sufficient to prove that
\[\lim_{a\to 1}\E_a\bigl(Z(a)\bigr)-Z(a) = 0.\]
This may be seen as follows.
The function $\E_a-\id$ is periodic of period $1$, hence of the form $u_a\circ \exp$ for some function
$u_a:\widehat U_a\to \C$. The function
\[u:\widehat U\ni (a,z)\longmapsto u_a(z)\in \C\]
is analytic,   in particular  continuous.
So,
\[\lim_{a\to 1} \E_a\bigl(Z(a)\bigr)-Z(a) = \lim_{a\to 1} u\bigl(a,\xi(a)\bigr) = u(1,0) = \lim_{\Im(Z)\to +\infty} \E_1(Z)-Z =0.\]
The last equality follows from the proof of Lemma \ref{multhorn}.
%
This shows that for $a$ sufficiently close to $1$, $Z(a)$ is a fixed point of $\E_a$ with multiplier $\rho(a)$, and the proof of
 Proposition \ref{attracting} is complete.
 \qed

\section{Wandering domains in $\R^2$}\label{sec:real}



In this section, we prove Proposition \ref{superattracting}, which shows
the existence of real polynomial maps in two complex variables with wandering Fatou components intersecting $\R^2$.
 Let us consider the   polynomial $f(z):=z+z^2+bz^4$. We seek
 a parameter $b\in (-8/27,0)$ such that the Lavaurs map
$\L_{f}$ has a fixed critical point in $\R\cap {\cal B}_{f}$.

\begin{proof}[Outline of the proof]
Set
\[b := -\frac{1+2c}{4c^3}\with c\in [-3/2,-1/2]\and f_c(z):= z+z^2-\frac{1+2c}{4c^3} z^4.\]
As $c$ increases from $-3/2$ to $-3/4$, the corresponding parameter $b$ decreases from $-4/27$ to $-8/27$ and as $c$ increases from $-3/4$ to $-1/2$, the parameter $b$ increases from $-8/27$ to $0$.  The point $c$ is a critical point of the polynomial $f_c$. As a consequence,
\[\deg_c \phi_{f_c} = \deg_cT_1\circ \phi_{f_c}  =\deg_{c} \phi_{f_c}\circ f_c = (\deg_{f_c(c)}\phi_{f_c} )\cdot (\deg_c f_c)\geq 2.\]
So, $c$ is a critical point of $\phi_{f_c}$, whence a critical point of $\L_{f_c}$.

\medskip\noindent{\bf Claim 1:} when $c\in (-3/2,-1/2]$, the critical point $c$ belongs to the parabolic basin ${\cal B}_{f_c}$.

\medskip\noindent{\bf Claim 2:} the function $\fL:(-3/2,-1/2]\to \R$ defined by $\fL(c):= \L_{f_c} (c)$ is continuous.

\medskip\noindent{\bf Claim 3:}  $\fL(-1/2)>0$.

\medskip\noindent{\bf Claim 4:} there is a sequence $c_n$ converging to $-3/2$ with $\fL(c_n)<c_n$.

\medskip
These four claims are enough to get the desired conclusion. Indeed, the function $c\mapsto \fL(c)-c$ takes a positive value at $c=-1/2$ and takes negative values arbitrarily close to $-3/2$. Since it is continuous, it follows from the Intermediate Value Theorem that it must vanish  somewhere in $(-3/2,-1/2)$.
\end{proof}

Figure \ref{graphL} shows the graph of the function $\fL:(-\frac{3}{2},-\frac{1}{2})\to \R$ which intersects the diagonal. As $c$ tends to $-\frac{3}{2}$, $\fL(c)$ accumulates the whole interval $f(\R)=(-\infty,x]$ with $x:=\frac{27}{16}+\frac{9}{8}\sqrt 3\simeq  3.63$.
\begin{figure}[htbp]
\centerline{
\framebox{\includegraphics[width=8cm]{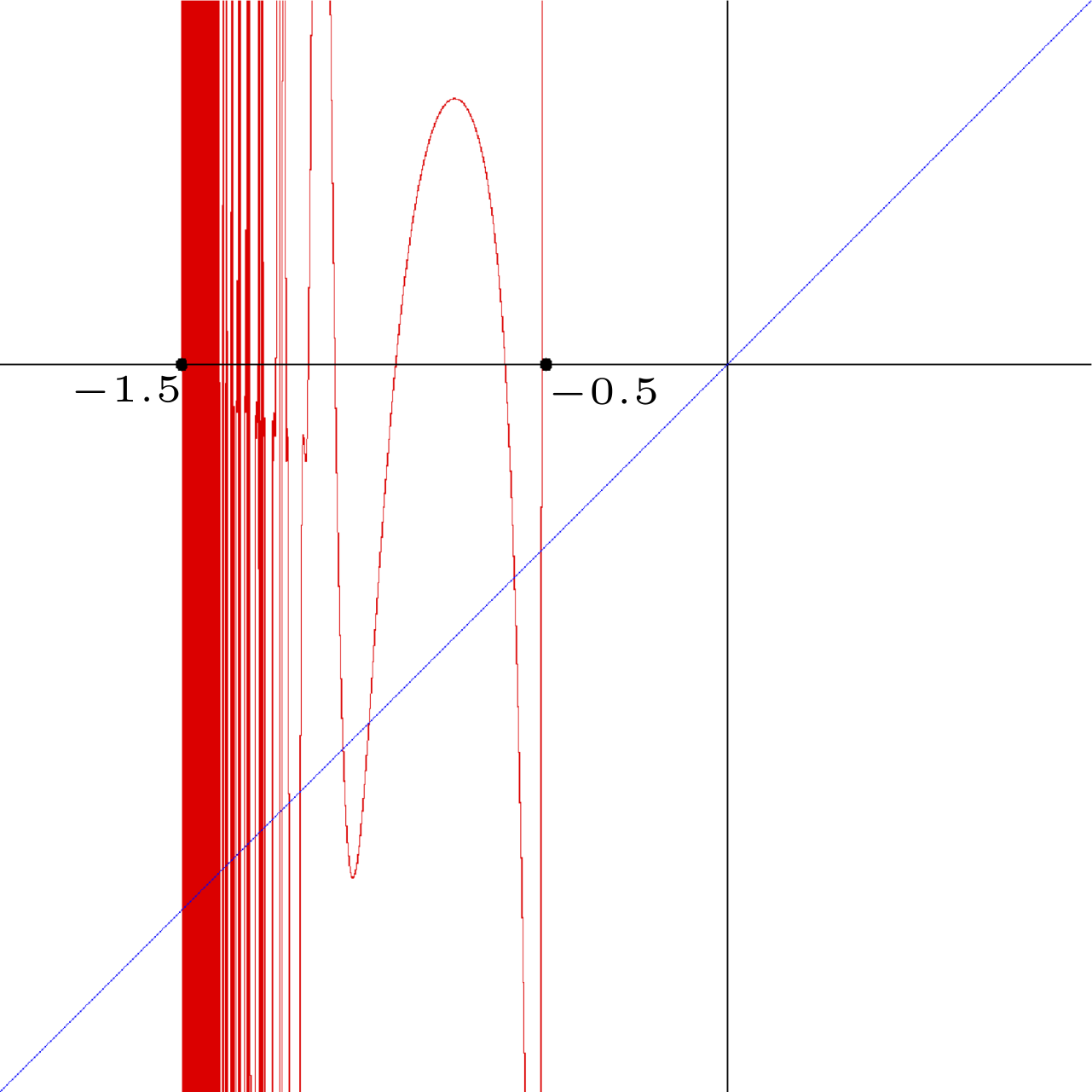}}
}
\caption{{\small The graph of the function $\fL:c\mapsto \L_{f_c}(c)$. Each intersection with the diagonal corresponds to a
 a super-attracting fixed point of $\L_{f_c}$.  \label{graphL}}}
\end{figure}

A numerical experiment suggests that the function $\fL(c)-c$ vanishes for a value of $c$ close to $-0.586$. Accordingly,  for
\[f(z) = z + z^2 -0.2136 z^4\]
the Lavaurs map $\L_f:\bpf\to \C$ has a real attracting fixed point.

%


\begin{proof}[Proof of Claim 1.]
It is enough to show that $z<f_c(z)\leq0$ for $z\in [c,0)$.
Indeed, if so, then the sequence $(f_c^n(c))_{n \geq 0}$ stays in  $[c,0)$ and it
is non-decreasing, so it must converge to the unique  fixed point in $[c,0]$,
namely to the  parabolic  fixed point $0$.

To see that $f_c - \id> 0$ on $[c,0)$, note that if $c\in [-3/2,-1/2]$, then $b\in [-8/27,0]$ and if $z\in [c,0)$, then
\[1+bz^2\geq 1+bc^2 = \frac{1}{2}-\frac{1}{4c}\geq \frac{1}{2}+\frac{1}{4}\cdot \frac{2}{3} = \frac{2}{3}.\]
Thus,
\[f_c(z) - z =  z^2 + bz^4 = z^2\cdot (1+bz^2)\geq \frac{2}{3}z^2>0.\]
%

To see that $f_c\leq 0$ on $[c,0)$, it is enough to see that $g(z):=1+z+bz^3 \geq 0$ on
$[c,0)$. As above, for $c\in [-3/2,-1/2]$ and $z\in [c,0)$, we have
\[g'(z)=1+3bz^2\geq 1+3bc^2=-\frac{1}{2}-\frac{3}{4c}\geq  -\frac{1}{2}+\frac{3}{4}\cdot \frac{2}{3} = 0.\]
Thus, $g$ is increasing on $[c,0)$ and since
\[
g(c)=1+c+bc^3=\frac{3}{4}+\frac{1}{2}c \geq 0.
\]
the proof of Claim 1 is completed.
\end{proof}

\begin{proof}[Proof of Claim 2.]
For $c\in \C\setminus{0}$ we may consider the attracting Fatou coordinate $\phi_{f_c}$ and the  repelling Fatou parameterization $
\psi_{f_c}$ of $f_c$ (normalized according to our usual convention, see Appendix \ref{app}). The formulas \eqref{eq:cvfatouatt} and
\eqref{eq:cvfatourep} defining $\phi_{f_c}$  and $\psi_{f_c}$ as limits show that $\phi_{f_c}$  and $\psi_{f_c}$ take real values on the real
axis. Define \[{\cal B}:=\bigl\{(c,z)\in \bigl(\C\setminus \{0\}\bigr)\times \C~;~z\in {\cal B}_{f_c}\bigr\}.\] Propositions \ref{prop:phicontinuous}
and \ref{prop:psicontinuous} imply that
\[{\cal B}\ni (c,z)\mapsto \phi_{f_c}(z)\in \C\and \bigl(\C\setminus \{0\}\bigr)\times \C\ni (c,Z)\mapsto \psi_{f_c}(z)\in \C\]
are continuous, as well as their composition
\[\L:{\cal B}\ni (c,z)\mapsto\psi_{f_c}\circ \phi_{f_c}(z).\]
Now for $c\in (-3/2,1/2]$, the point $(c,c)$ belongs to ${\cal B}$   so we conclude that  the function $\fL:c\mapsto \L(c,c)$ is
continuous on $(-3/2,-1/2]$.
\end{proof}

\begin{proof}[Proof of Claim 3.]
Assume $c=-1/2$ so that  $f:=f_{-1/2}$ is  the quadratic polynomial $z\mapsto z+z^2$. The repelling Fatou parametrization sends points on $\R$ which are sufficiently close to $-\infty$ to points on $\R^+$ which are close to $0$. Since
$f(\R^+)=\R^+$ and since $\psi_f(z) = f^{\circ m}\circ \psi_f(z-m)$ for all $m\geq 0$, we see that $\psi_f(\R)= \R^+$.
As a consequence, $\fL(-1/2) = \psi_f\circ \phi_f(-1/2)>0$.
\end{proof}

\begin{proof}[Proof of Claim 4.]
Let us first study the behavior of $\phi_{f_c}(c)$ when $c$ is close to $-3/2$. Putting $c=-\frac32+t$ we compute
$$f_c(c)=\frac{3}{4}c+\frac{1}{2}c^2 = -\frac{3}{4}t+\O(t^2).$$
Let $\Phi(c):=\phi_{f_c}(c)$. Then the asymptotic expansion of $\phi_{f_c}$ (see \S \ref{subs:appatt}) at 0 yields
\begin{equation*}
\Phi(c)=\phi_{f_c} \circ f_c(c) -1 = \frac{4}{3t}-\log \left( \frac{4}{3t}\right)-1+\o(1)
\end{equation*}
Thus the sequence of maps $(\Phi_n)$ defined by :
\begin{equation*}
\Phi_n(u):=\Phi \left(  -\frac{3}{2}+ \frac{4}{3(n+u)} \right) - n + \log n +1
\end{equation*}
converges uniformly to the identity  on compact intervals of $\R$.

\medskip

Now let us consider the map $f:=f_{c_0}$ for $c_0:=-3/2$.
Figure \ref{graph} shows the graph of $f$.
\begin{figure}[htbp]
\centerline{
\framebox{\includegraphics[width=8cm]{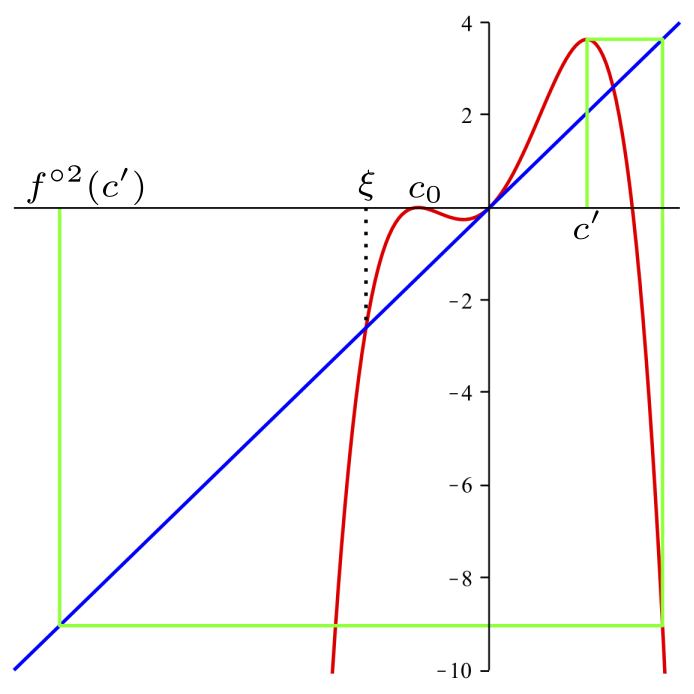}}
}
\caption{ \small The graph of the function $f(z):=z+z^2-\frac{4}{27}z^4$ and the first iterates of the critical point $c':=\frac{3}{4}(1+\sqrt 3)$. \label{graph}}
\end{figure}
The fixed points of $f$ are $0$, $\xi:=-\frac{3\sqrt 3}{2}$ and $\xi':=\frac{3\sqrt 3}{2}$. The critical points of $f$ are $c_0=-\frac{3}{2}$, $c':=\frac{3}{4}(1+\sqrt 3)$ and $c'':=\frac{3}{4}(1-\sqrt 3)$. We see that
\[f^{\circ 2}(c')<\xi<c_0<c''<0<c'<\xi'<f(c').\]
Thus, $f$ sends the interval $(-\infty,\xi)$ into itself and the orbit of any point in this interval escapes to $-\infty$.
In particular, the orbit of $c'$ escapes to $-\infty$.
In addition, $f-\id \geq 0$ on $[0,c']$, and $f$ is increasing on $[0,c']$.
So, we can define a sequence $(c'_m)_{m\geq 0}$ recursively by $c'_0:=c'$ and for $m\geq 0$,
\[c'_{m+1}\in (0,c')\and f(c'_{m+1})=c'_m.\]
This sequence is decreasing and converges to a fixed point of $f$, thus to $0$.

Choose $m_0$ large enough so that $x:=c'_{m_0}$ belongs to the repelling petal $\Pr$. Hence  $f^{\circ (m_0+2)}(x)<\xi$. Choose $\eps>0$ small enough so that for all $c\in (c_0,c_0+\eps)$, the point $x$ belongs to the repelling petal of $f_c$ and $f_c^{\circ m_0+2}(x)<\xi(c)$ where $\xi(c)$ is the leftmost fixed point of $f_c$ in $\R$. In particular, for all $m\geq m_0+2$, we have that  $f^{\circ m}(x)<\xi(c)<c$.

For $c\in (c_0,c_0+\eps)$, set
\begin{equation*}
\Psi(c):=\psi_{f_c}^{-1}(x)
\end{equation*}
and
\begin{equation*}
\Psi_n(u):=\Psi\left( -\frac{3}{2}+ \frac{4}{3(n+u)} \right).
\end{equation*}
Note that $\Psi(c)=  X_0+ \O(t)$ since $x$ lies in the repelling petal of $f_c$ and $\psi_c$
varies continuously with $c$. Therefore we also have that $\Psi_n(u)= X_0+ \O(1/n)$.

Together with the Intermediate Value Theorem, this  implies that for  large enough~$n$, the equation
\begin{equation*}
\Phi_n(u)=\Psi_n(u) + \{\log n\}
\end{equation*}
admits  at least one solution $u_n \in (X_0-1,X_0+2)$, where $\{\log n\}$ denotes  the fractional part
of $\log n$.

Now set
\[c_n := - \frac{3}{2} + \frac{4}{3(n+u_n)}\and s_n:=\lfloor \log n\rfloor.\]
We have that
\begin{align*}
\phi_{f_{c_n}}(c_n) &= \Phi_n(u_n) + n - \log n -1\\
&= \Psi_n(u_n) + \{\log n\} + n - \log n -1\\
&= \Psi_n(u_n) + n - s_n-1.
\end{align*}
Thus,
\begin{align*}
\fL(c_n)   &= \psi_{f_{c_n}} \circ \phi_{f_{c_n}}(c_n) \\
&= \psi_{f_{c_n}} \bigl(\Psi_n(u_n) + n-s_n-1\bigr)   \\
&= \psi_{f_{c_n}}\bigl( \psi_{f_{c_n}}^{-1}(x) + n - s_n-1 \bigr) = f_{c_n}^{\circ (n-s_n-1)}(x).
\end{align*}

Finally, since $n-s_n-1>m_0$ for $n$ large enough and since $f^{\circ m}(x)<c$ for all $m\geq m_0+2$ and all $c\in (c_0,c_0+\eps)$, we deduce that
$\fL(c_n)=  f_{c_n}^{\circ (n-s_n-1)}(x)<c_n$ for $n$ large enough. This completes the proof of Claim 4.
\end{proof}

\appendix

\section{Fatou coordinates\label{app}}

Throughout this Section  $f:\C\to \C$ is a polynomial of the form 
\[f(z)= z+a_2z^2+ a_3z^3+\O(z^4)\with a_2\in \C\setminus \{0\}.\]
In the coordinate $Z=-1/(a_2 z)$, the expression of $f$ becomes
\[F(Z) = Z+ 1+ \frac{b}{Z}+\O\left(\frac{1}{Z^2}\right)\with b= 1-\frac{a_3}{a_2^2}.\]
Choose $R>0$ sufficiently large so that $F$ is univalent on $\C\setminus \overline \D(0,R)$,
and the estimates \eqref{eq:choiceR} hold.
Denote by $\H_R$ the right half-plane $\H_R:=\bigl\{Z\in \C~;~\Re(Z)>R\bigr\}$ and by $-\H_R$
the
corresponding left half-plane.

Finally, denote by  $\log:\C\setminus \R^-\to \C$ be the principal branch of logarithm.

\subsection{Attracting Fatou coordinate}\label{subs:appatt}

As $\Re(Z)\to +\infty$,
\begin{align*}
F(Z) - b\log \bigl(F(Z)\bigr) &= Z+1+\frac{b}{Z} + \O\left(\frac{1}{Z^2}\right) - b\log\left(Z+1+\frac{b}{Z} + \O\left(\frac{1}{Z^2}\right) \right) \\
&=
Z-b\log Z + 1 + \O\left(\frac{1}{Z^2}\right).
\end{align*}
The  map $F$ is univalent on the right half-plane
$\H_R$ and if $Z\in \H_R$, then
 \[F^{\circ m}(Z) - b\log \bigl(F^{\circ m}(Z)\bigr) = Z-b\log Z + m + \O(1)\as m\to +\infty.\]
It follows that the sequence of univalent maps
\[\H_R\ni Z\longmapsto F^{\circ m}(Z) - m -b\log m\in \C\]
is normal and converges locally uniformly  to a univalent map $\Phi_F:\H_R\to \C$ satisfying
\[\Phi_F\circ F = T_1\circ \Phi_F\with T_1(Z) = Z+1.\]
In addition,
\[\Phi_F(Z) = Z-b\log Z + \o(1)\as \Re(Z)\to +\infty.\]
Transferring this to the initial coordinate, we see that the sequence of mappings
$\bpf\cv\C$ defined by
\begin{equation}\label{eq:cvfatouatt}
  z\longmapsto-\frac{1}{a_2\cdot f^{\circ m}(z)}-m-b\log m
\end{equation}
converges locally uniformly to an {\em attracting Fatou coordinate} $\phi_f:\bpf \to \C$ which
semi-conjugates $f:\bpf\to \bpf$ to $T_1:\C\to \C$, that is $\phi_f\circ f = T_1\circ \phi_f$, and satisfies
\[\phi_f(z) = -\frac{1}{a_2 z} -b\log \left(-\frac{1}{a_2 z}\right) + \o(1)\as \Re(-1/z)\to +\infty.\]
The restriction of $\phi_f$ to the {\em attracting petal}
\[\Pa:=\left\{z\in \C~;~\Re\left(-\frac{1}{a_2 z}\right)>R\right\}\]
coincides with $z\mapsto \Phi_F\bigl(-1/(a_2z)\bigr)$, hence it is univalent.

In addition, the convergence in \eqref{eq:cvfatouatt} is locally uniform with respect to $f$ in the open set
$\mathcal{B} = \set{(f,z), \ z\in\bpf}$, which yields the following result.

\begin{proposition}\label{prop:phicontinuous}
The map $\phi_f$ depends holomorphically on $f$.
\end{proposition}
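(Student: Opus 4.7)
The plan is to realize $\phi_f$ as a locally uniform limit of maps that are jointly holomorphic in the pair $(f,z)$, then invoke Weierstrass's theorem on holomorphic limits. Parametrizing $f$ by its coefficients, for each integer $m$ the approximant
\[
\phi_{f,m}(z) := -\frac{1}{a_2\cdot f^{\circ m}(z)} - m - b\log m,
\]
where $a_2$ and $b = 1-a_3/a_2^2$ depend holomorphically on $f$, is holomorphic on the open set $\{(f,z) : f^{\circ m}(z) \neq 0\}$. Near any point $(f_0, z_0)$ with $z_0 \in \mathcal{B}_{f_0}$, one has $f_0^{\circ m}(z_0) \neq 0$ for all $m$ (otherwise the orbit would stabilize at $0$, contradicting the definition of $\mathcal{B}_{f_0}$), and by joint continuity of iteration the same holds on a fixed neighborhood of $(f_0,z_0)$ once $m$ is large. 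Hence each approximant is eventually holomorphic on a fixed neighborhood of any point of $\mathcal{B}$.

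The heart of the proof is to verify that the convergence $\phi_{f,m} \to \phi_f$ is locally uniform on $\mathcal{B} := \{(f,z) : z \in \mathcal{B}_f\}$. First I would treat the case where $z_0$ lies in the attracting petal of $f_0$: fixing a compact neighborhood $K$ of $f_0$ in coefficient space, the radius $R$ of \eqref{eq:choiceR} can be chosen uniformly over $K$, because the coefficients of $F(Z) = -1/\bigl(a_2\, f(-1/(a_2 Z))\bigr)$ depend continuously on those of $f$. Rerunning the construction of $\Phi_F$ in Subsection \ref{subs:appatt} in this parametrized setting, each step of the telescoping estimate producing $\Phi_F$ as the limit of $Z \mapsto F^{\circ m}(Z) - m - b\log m$ is uniform in $f \in K$, and this gives uniform convergence of $\phi_{f,m}$ to $\phi_f$ on a neighborhood of $(f_0, z_0)$. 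For a general point $(f_0, z_0) \in \mathcal{B}$, I would choose $n_0$ with $f_0^{\circ n_0}(z_0)$ in the attracting petal of $f_0$; by joint continuity of iteration, $f^{\circ n_0}(z)$ remains in the attracting petal of $f$ throughout a neighborhood of $(f_0, z_0)$, and the functional equation $\phi_f(z) = \phi_f(f^{\circ n_0}(z)) - n_0$ transports the uniform convergence already established on the petal.

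Once this locally uniform convergence on $\mathcal{B}$ is in hand, Weierstrass's theorem gives that the limit $(f,z) \mapsto \phi_f(z)$ is holomorphic on $\mathcal{B}$, so in particular $f \mapsto \phi_f(z)$ is holomorphic for every fixed $z$, which is the content of the proposition. The only real obstacle is bookkeeping, namely making the constants appearing in the existence proof of $\Phi_F$ explicitly uniform in $f$ over compact subsets of coefficient space. Since every estimate there ultimately rests on the expansion $F(Z) = Z + 1 + b/Z + \mathrm{O}(1/Z^2)$ with coefficients depending holomorphically on $f$, this uniformity is automatic and no new ingredient beyond the argument of Subsection \ref{subs:appatt} is required.
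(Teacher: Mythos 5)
Your proposal is correct and follows essentially the same route as the paper: the paper simply asserts that the convergence of the approximants \eqref{eq:cvfatouatt} to $\phi_f$ is locally uniform in $(f,z)$ on $\mathcal{B}$ and then concludes (implicitly) by Weierstrass. You make explicit the bookkeeping that the paper leaves implicit, namely that $R$ in \eqref{eq:choiceR} can be taken uniform on compact parameter sets and that the functional equation $\phi_f(z) = \phi_f(f^{\circ n_0}(z)) - n_0$ transports uniform convergence from the petal to the full basin.
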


Figure \ref{extendedattfatou} illustrates the behavior of the extended Fatou coordinate for the cubic polynomial $f_1(z) = z+ z^2+ z^3$ which has two critical points $c^\pm :=(-1\pm \i\sqrt 2)/3$. The basin of attraction $\bpf$ is colored according to the following scheme:
\begin{itemize}
\item blue if $\Im\bigl(\phi_f(z)\bigr)>\Im\bigl(\phi_f(c^+)\bigr)$,
\item red if $\Im\bigl(\phi_f(z)\bigr)<\Im\bigl(\phi_f(c^-)\bigr)$,
\item green if $\Im\bigl(\phi_f(c^-)\bigr)<\Im\bigl(\phi_f(z)\bigr)<\Im\bigl(\phi_f(c^+)\bigr)$.
\end{itemize}

\begin{figure}[htbp]
\centerline{
\framebox{\includegraphics[height=7cm]{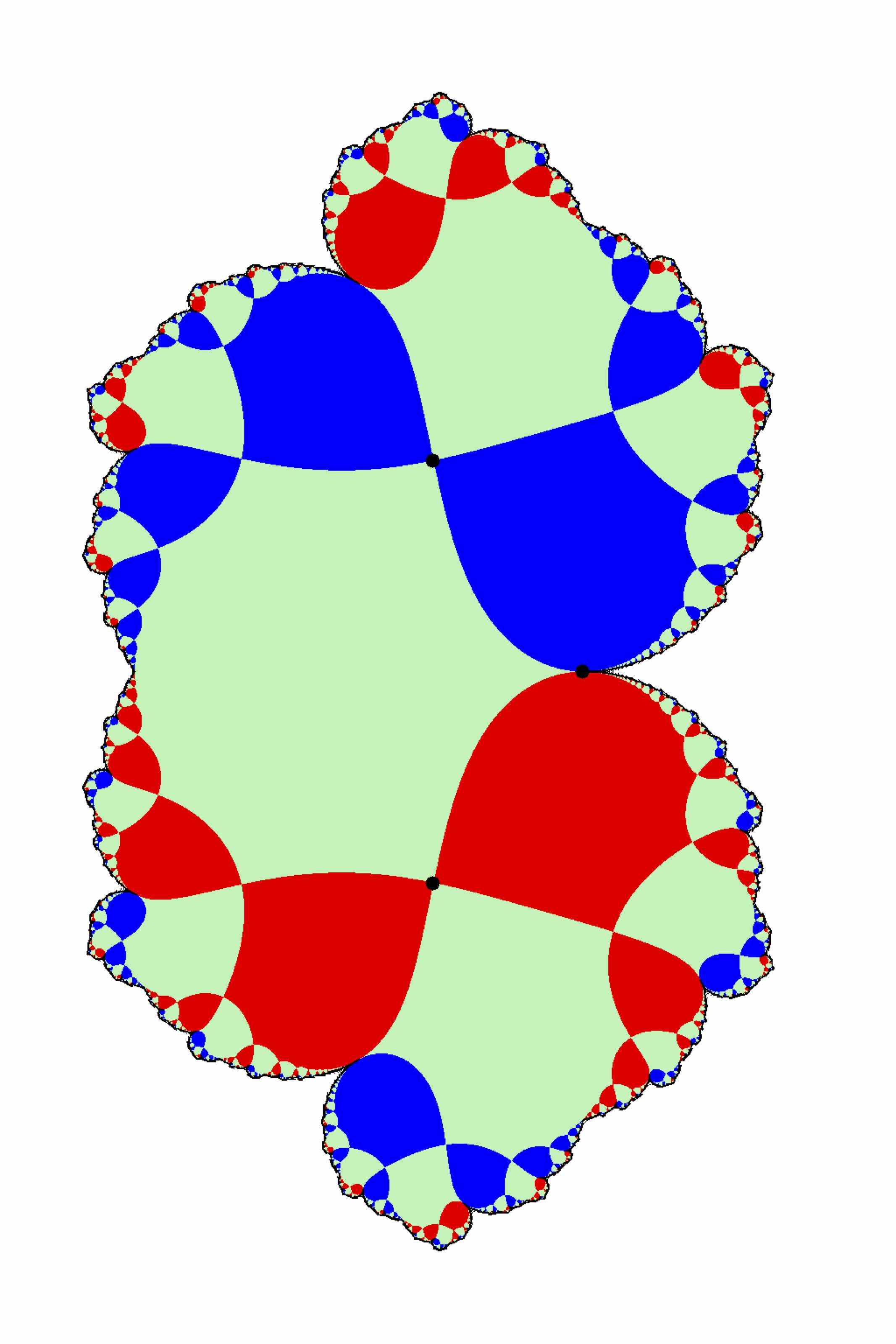}}
\quad
\framebox{\includegraphics[height=7cm]{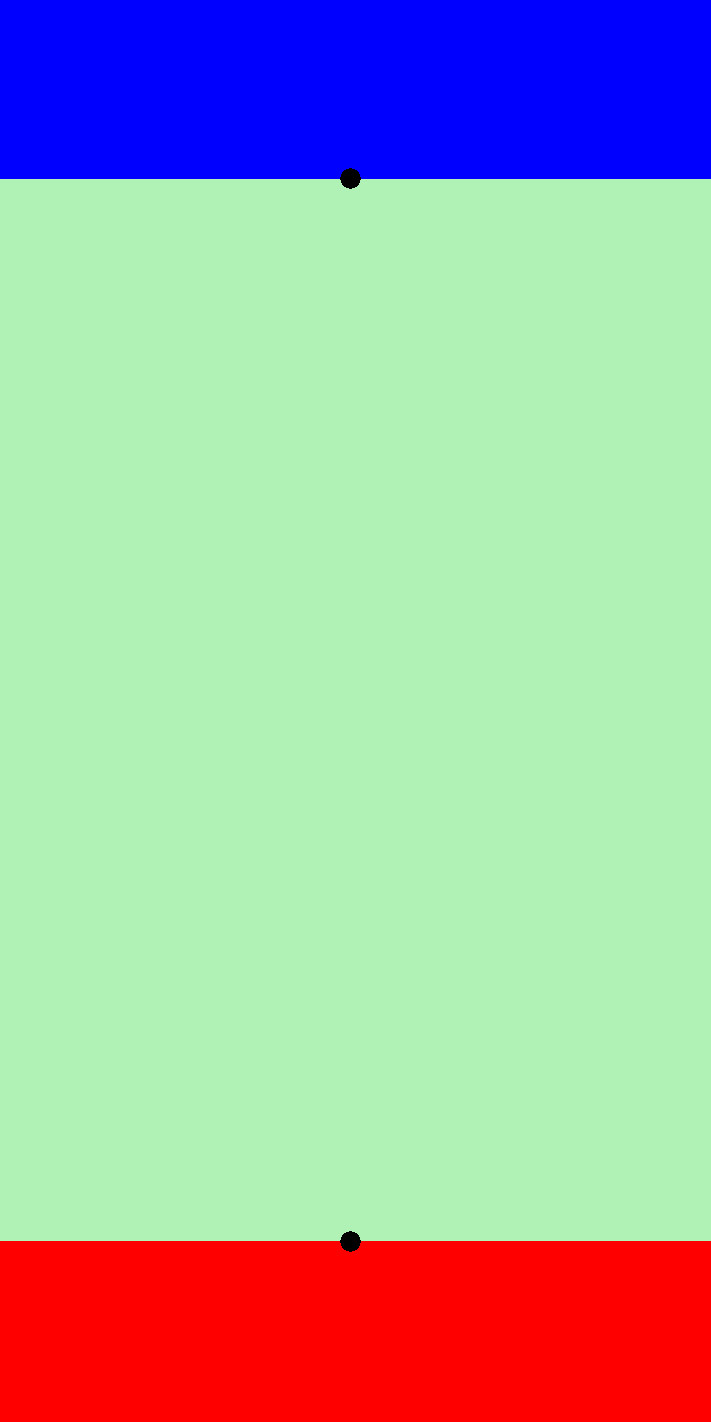}}
}
\caption{\small Left: the basin of attraction $\bpf$. The attracting Fatou coordinate $\phi_f$ is univalent on each tile. It sends each blue tile to an upper half-plane, each red tile to a lower half-plane and each green tile to an horizontal strip. The parabolic point at $0$ and the critical points $c^\pm$ are marked.  Right: the range of $\phi_f$. The points $\phi_f(c^\pm)$ are marked. \label{extendedattfatou}}
\end{figure}

\subsection{Repelling Fatou coordinate}
As $\Re(Z)\to -\infty$,
\begin{align*}
F\bigl(Z+b\log (-Z)\bigr) &= Z+b\log (-Z) + 1 + \frac{b}{Z+b\log (-Z)} + \O\left(\frac{1}{Z^2}\right)\\
& = (Z+1) + b\log(-Z-1) + \O\left(\frac{1}{Z^2}\right).
\end{align*}
It follows that if $R>0$ is sufficiently large and $\Re(Z)<-R$, then
\[F^{\circ m}\bigl((Z-m)+b\log(m-Z)\bigr) = \O(1)\as m\to +\infty.\]
In that case, the sequence of univalent maps
\[-\H_R\ni Z\longmapsto F^{\circ m}(Z-m+b\log m)\in \C \]
converges locally uniformly  to a map $\Psi_F:-\H_R\to \C$ satisfying
\[\Psi_F\circ T_1 = F \circ \Psi_F.\]
In addition,
\[\Psi_F(Z) = Z+b\log (-Z) + \o(1)\as \Re(Z)\to -\infty.\]
Transferring this to the initial coordinate, we see that the sequence of maps
\begin{equation}\label{eq:cvfatourep}
\C\ni Z\longmapsto f^{\circ m}\left(-\frac{1}{a_2 \cdot (Z-m+b\log m)}\right)\in \C
\end{equation}
converges locally uniformly to an {\em repelling Fatou parameterization} $\psi_f:\C \to \C$ which
semi-conjugates $T_1:\C\to \C$ to $f:\C\to \C$, that is $\psi_f\circ T_1 = f\circ \psi_f$, and satisfies
\[-\frac{1}{\psi_f(Z)} = Z+b\log (-Z) + \o(1)\as \Re(Z)\to -\infty.\]
The restriction of $\psi_f$ to the left half-plane $-\H_R$ coincides with $Z\mapsto -1/\bigl(a_2\Psi_F(Z)\bigr)$, whence is univalent.
The image $\Pr:=\psi_f(-\H_R)$ is called a {\em repelling petal}.

The following proposition  holds for the same reasons as in the attracting case (see \cite[Section 5]{bee} for details).

\begin{proposition}\label{prop:psicontinuous}
The map $\psi_f$ depends holomorphically on $f$.
\end{proposition}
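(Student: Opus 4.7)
The plan is to replicate the argument underlying Proposition \ref{prop:phicontinuous}: establish that the sequence defining $\psi_f$ in \eqref{eq:cvfatourep} converges locally uniformly jointly in $(f,Z)$, and then invoke Weierstrass's theorem on uniform limits of holomorphic functions. The paper essentially asserts this in the last sentence, and our job is to spell out the uniformity.

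Concretely, I would fix a holomorphic family $\{f_\lambda\}_{\lambda\in \Lambda}$ of polynomials of the given parabolic form with $a_2(\lambda)\neq 0$, and work on a compact set $K\Subset \Lambda$. Then $a_2(\lambda)$, $a_3(\lambda)$ and $b(\lambda)=1-a_3/a_2^2$ are bounded and holomorphic on $K$, and the conjugate map $F_\lambda(Z) := -1/\bigl(a_2 f_\lambda(-1/(a_2 Z))\bigr)$ depends holomorphically on $\lambda$. By compactness one may enlarge $R$ once and for all so that \eqref{eq:choiceR} holds uniformly over $\lambda\in K$, and so that $F_\lambda$ is univalent on $\C\setminus\overline D(0,R)$ uniformly in $\lambda$. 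For $m$ large enough, the approximants
\[\psi_{\lambda,m}(Z) := f_\lambda^{\circ m}\!\left(-\frac{1}{a_2(\lambda)\,(Z-m+b(\lambda)\log m)}\right)\]
are jointly holomorphic in $(\lambda,Z)$ on $K\times (-\H_R)$.

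The core step is the locally uniform convergence of $\psi_{\lambda,m}$. The computation carried out in Subsection ``Repelling Fatou coordinate'' for a fixed $f$ yields
\[F_\lambda\bigl(W+b\log(-W)\bigr) = (W+1)+b\log\bigl(-(W+1)\bigr)+\O\!\left(\frac{1}{W^2}\right)\as \Re(W)\to -\infty,\]
and telescoping this identity along the backward orbit $W_j := Z-(m-j)+b\log m$ for $0\le j\le m$ controls the difference $\psi_{\lambda,m+1}-\psi_{\lambda,m}$ by a tail of the summable series $\sum_j 1/|W_j|^2$. The main thing to verify is that the $\O(1/W^2)$ constant is uniform in $\lambda\in K$. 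This is clear by compactness: the quantity $F_\lambda(W)-W-1-b(\lambda)/W$ is jointly holomorphic in $(\lambda,W)$ on $K\times \bigl(\C\setminus \overline D(0,R)\bigr)$ and vanishes to order $1/W^2$ at infinity, so it is bounded on $K\times \{|W|\ge R'\}$ by $C/|W|^2$ for some $R'>R$ and some constant $C$ depending only on $K$.

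The main obstacle is therefore purely one of bookkeeping, namely upgrading the pointwise-in-$f$ asymptotic estimates of the construction of $\psi_f$ to estimates that are uniform as $\lambda$ ranges over compacta of parameter space, which the compactness argument above delivers. Once locally uniform convergence of $(\psi_{\lambda,m})$ on $K\times (-\H_R)$ is in hand, Weierstrass's theorem (or, equivalently, Hartogs's theorem on separate analyticity) shows that the limit $(\lambda,Z)\mapsto \psi_{f_\lambda}(Z)$ is jointly holomorphic; the extension from $-\H_R$ to all of $\C$ is automatic via the functional equation $\psi_{f_\lambda}\circ T_1=f_\lambda\circ \psi_{f_\lambda}$, which preserves joint holomorphy. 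This yields the proposition.
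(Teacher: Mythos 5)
Your proposal is correct and follows the approach the paper indicates: the paper's entire "proof" consists of noting that the proposition holds "for the same reasons as in the attracting case" (namely, local uniform convergence of the defining sequence \eqref{eq:cvfatourep} jointly in $(f,Z)$, combined with Weierstrass's theorem), and referring to \cite[Section~5]{bee} for details. Your argument spells out exactly this, with the compactness observations that make the convergence uniform over compacta in parameter space.
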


\subsection{Lavaurs maps}

For $\sigma\in \C$, the Lavaurs map with phase $\sigma$ is the map
\[L_{f,\sigma}:=\psi_f\circ T_\sigma \circ \phi_f:\bpf\to \C\with T_\sigma(Z)=Z+\sigma.\]
In this article, we are only concerned by the Lavaurs map $\L_f:=\L_{f,0}:=\psi_f\circ \phi_f$ with phase $0$.
The relevance of Lavaurs maps is justified by the following result due to Pierre Lavaurs \cite{lavaurs}.

\begin{theorem}[Lavaurs]\label{th:lavaurs}
Let $f:\C\to \C$ be a polynomial such that $f(z) = z + z^2 + \O(z^3)$ as $z\to 0$. For $\eps\in \C$, set $f_\eps(z):=f(z)+\eps^2$.
Let $(\eps_n)_{n\geq 0}$ be a sequence of complex numbers and $(m_n)_{n\geq 0}$ be a sequence of integers, such that
\[\frac{\pi}{\eps_n}-m_n\to \sigma\in \C\as n\to +\infty.\]
Then, the sequence of polynomials $f_{\eps_n}^{\circ m_n}$ converges locally uniformly on $\bpf$ to $\L_{f,\sigma}$.
\end{theorem}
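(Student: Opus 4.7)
My approach will follow Lavaurs' classical parabolic implosion strategy: construct perturbed Fatou coordinates for $f_\eps$, show they converge to the Fatou coordinates of $f$, and compute the translation number accumulated by an orbit as it crosses the ``gate'' near the parabolic fixed point. For $\eps$ small and nonzero, $f_\eps$ has two (weakly repelling) fixed points $\alpha^\pm(\eps) = \pm\i\eps + \O(\eps^2)$ near $0$, between which lies a fixed-point-free crescent $U_\eps$ through which every orbit passes in $\pi/\eps + \O(1)$ iterations. On $U_\eps$ one constructs two univalent Fatou coordinates
\[
\phi_\eps^\att,\ \phi_\eps^\rep : U_\eps \to \C, \qquad \phi_\eps^? \circ f_\eps = T_1 \circ \phi_\eps^?,
\]
via the normal-family argument of Appendix~\ref{app} applied to the perturbed map $F_\eps(Z) := -1/f_\eps(-1/Z)$. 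Then extend $\psi_\eps := (\phi_\eps^\rep)^{-1}$ to all of $\C$ by the functional equation $\psi_\eps \circ T_1 = f_\eps \circ \psi_\eps$, and fix normalizations so that $\phi_\eps^\att(z) \sim -1/z$ on the attracting side and $-1/\psi_\eps(Z) \sim Z$ on the repelling side, in parallel with Appendix~\ref{app}.

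The second step will establish convergence of these perturbed coordinates: as $\eps \to 0$,
\[
\phi_\eps^\att \longrightarrow \phi_f \text{ and } \psi_\eps \longrightarrow \psi_f,
\]
locally uniformly on compact subsets of $\bpf$ and of $\C$ respectively. This will follow by revisiting the defining limits~(\ref{eq:cvfatouatt}) and~(\ref{eq:cvfatourep}): the estimates on $F_\eps - T_1$ and $F'_\eps - 1$ hold uniformly on $\H_R$ for $\eps$ in a neighborhood of $0$, so the joint limit in $\eps$ and iteration index commutes. The third step, and the main technical obstacle, is a precise estimate of the ``gate translation'':
\[
\phi_\eps^\att(z) - \psi_\eps^{-1}(z) = \frac{\pi}{\eps} + \O(1) \as \eps \to 0,
\]
valid on the overlap of the two coordinate domains, with the $\O(1)$ correction computable in terms of $b = 1 - a_3/a_2^2$. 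The heuristic is that near $0$, $f_\eps$ is an $\O(z^3)$-perturbation of the time-$1$ map of the model vector field $\dot z = z^2 + \eps^2$, whose trajectories $z(t) = \eps \tan(\eps(t - t_0))$ cross the gate in time $\pi/\eps$. The hard part will be to make this quantitative and uniform over $m = \O(\pi/\eps)$ discrete iterations: the standard device is to pass to the rescaled coordinate $w = z/\eps$, in which $f_\eps$ becomes an $\O(\eps)$-perturbation of the time-$\eps$ map of $\dot w = 1 + w^2$, and to apply a discrete Gronwall-type estimate.

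With these three ingredients, the theorem will follow by direct assembly. For $z$ in a compact subset of $\bpf$ and $n$ large, the functional equation for $\psi_{\eps_n}$ (with $\psi_\eps^{-1}$ extended to the attracting side by the gate relation) gives
\[
f_{\eps_n}^{\circ m_n}(z) = \psi_{\eps_n}\Bigl(\phi_{\eps_n}^\att(z) - \tfrac{\pi}{\eps_n} + m_n + \O(1)\Bigr).
\]
The hypothesis $\pi/\eps_n - m_n \to \sigma$ together with $\phi_{\eps_n}^\att \to \phi_f$ then ensures, after absorbing the $\O(1)$ constant into the normalization of the Fatou coordinates, that the argument of $\psi_{\eps_n}$ converges to $\phi_f(z) + \sigma$; combined with $\psi_{\eps_n} \to \psi_f$, this will yield $f_{\eps_n}^{\circ m_n}(z) \to \psi_f(\phi_f(z) + \sigma) = \L_{f,\sigma}(z)$, locally uniformly on $\bpf$.
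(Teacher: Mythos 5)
The paper does not prove Theorem~\ref{th:lavaurs}: it is stated in Appendix~\ref{app} with attribution to Lavaurs' thesis \cite{lavaurs} and never re-proved, so there is no paper proof to compare your attempt against. What the paper \emph{does} prove is the non-autonomous analogue, Proposition~\ref{key}, and there the authors deliberately avoid the normal-family construction you sketch: instead of defining perturbed Fatou coordinates as locally uniform limits of iterates, they write down explicit approximate Fatou coordinates $\varphi_w = \chi_w\circ \psi_w^{-1}$ (Section~\ref{sec:approxcoord}), where $\psi_w$ is an explicit universal cover of $\P^1\setminus\{\zeta^\pm(w)\}$ and $\chi_w$ is an explicit logarithmic correction, and they verify Properties~\ref{compatt}--\ref{approx} by direct estimate. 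That choice is what makes the $\o(w)$ error in Property~\ref{approx} and hence the precise transition count in Proposition~\ref{eps} tractable; a normal-family definition would make the uniform-in-$k$ control across the $\O(n)$ steps between $n^2$ and $(n+1)^2$ much harder to extract.

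On its own merits your outline is the standard Lavaurs/Douady/Shishikura route, but it has a gap that cannot be waved away. Your gate estimate $\phi_\eps^{\att}(z) - \psi_\eps^{-1}(z) = \pi/\eps + \O(1)$ is too weak: as you assemble the argument, a bounded but unidentified error of size $\O(1)$ in the crossing time shows up inside the argument of $\psi_{\eps_n}$, and it is exactly of the same order as the quantity $\sigma$ you are trying to see. With only $\O(1)$ control you would conclude convergence of $f_{\eps_n}^{\circ m_n}$ along subsequences to \emph{some} Lavaurs map $\L_{f,\sigma+c}$ with an undetermined bounded phase shift $c$, not to $\L_{f,\sigma}$. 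Your remedy, ``after absorbing the $\O(1)$ constant into the normalization of the Fatou coordinates,'' does not work here: $\phi_f$ and $\psi_f$ are already pinned down by the asymptotic normalizations in Appendix~\ref{app}, and the phase-$0$ Lavaurs map $\L_f = \psi_f\circ\phi_f$ on the left-hand side of the statement is defined relative to those same normalizations, so there is no freedom left to reabsorb a constant. What the proof actually requires is that the gate error be $\o(1)$ as $\eps\to 0$ (equivalently, that the constant be computed and shown to vanish with the given normalizations), and establishing this is the heart of the theorem, not a remark. A secondary, smaller gap: the crescent $U_\eps$ with two weakly repelling fixed points and orbits crossing in $\approx\pi/\eps$ steps only exists for $\eps$ in a suitable sector about the positive real axis (for $\eps$ near $\i\R$ the two fixed points become one attracting and one repelling and no implosion occurs); this sector restriction is implied by $\pi/\eps_n - m_n\to\sigma\in\C$ with $m_n\in\Z$, but your sketch asserts the crescent structure for all small $\eps\ne 0$ and never records where the hypothesis is used.
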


It is also relevant to consider the map
\[\E_f:=\phi_f\circ \psi_f:{\cal U}_f\to \C\with {\cal U}_f:=\psi_f^{-1}(\bpf).\]
The repelling parameterization $\psi_f$ semi-conjugates $\E_f:{\cal U}_f\to \C$ to $\L_f:\bpf\to \C$.
Figure \ref{Ef} illustrates the behavior of the map $\E_f$ for $f(z) = z+ z^2+z^3$.

\begin{figure}[htbp]
\centerline{
\framebox{\includegraphics[height=7cm]{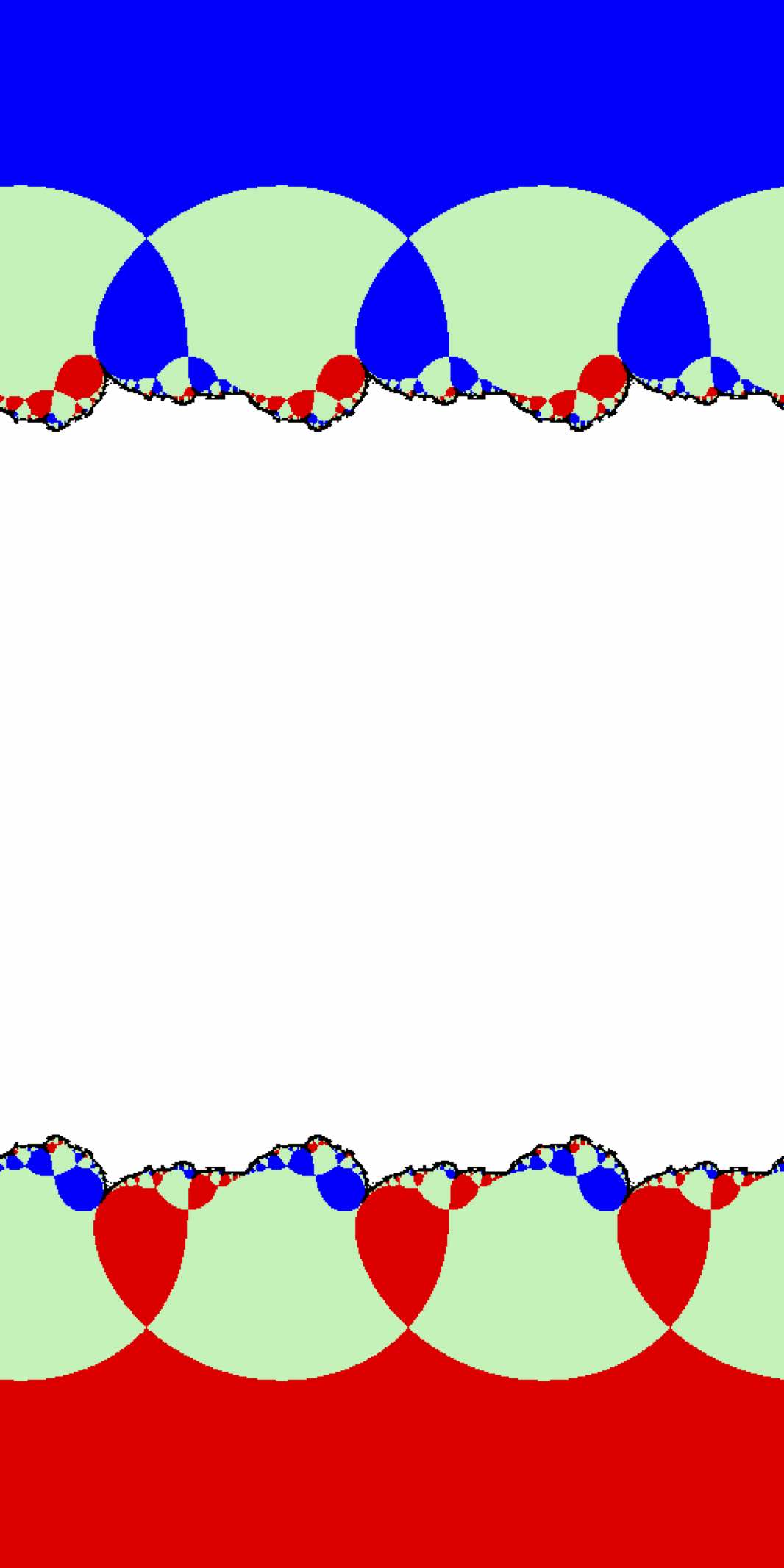}}
\qquad
\framebox{\includegraphics[height=7cm]{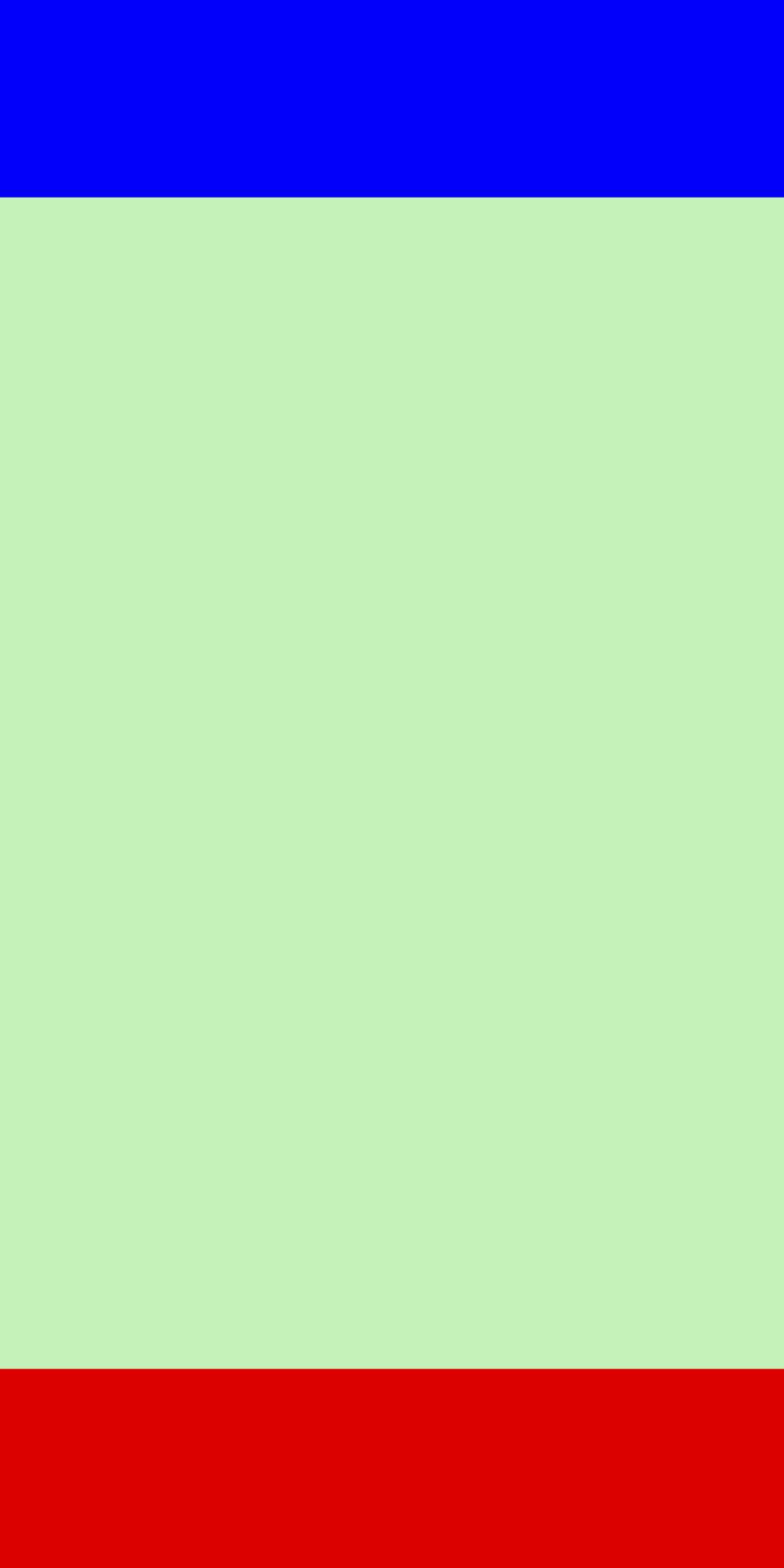}}
}
\caption{\small Behavior of the map $\E_f$ for $f(z)=z+z^2+z^3$. The domain ${\cal U}_f$ has two connected components, one containing an upper half-plane and the other containing a lower half-plane. The domain is tiled according to the behavior of $\E_f$. The restriction of $E_f$ to each tile is univalent. The image of blue tiles is the blue upper half-plane on the right. The image of red tiles is the red lower half-plane on the right. The image of green tiles is the horizontal green strip on the right. \label{Ef}}
\end{figure}

 Propositions \ref{prop:phicontinuous} and \ref{prop:psicontinuous} imply that  $\E_f$ and $\L_f$  vary nicely with $f$.

\begin{proposition}\label{prop:elcontinuous}
The mappings  $\E_f$ and $\L_f$ depend holomorphically on $f$.
\end{proposition}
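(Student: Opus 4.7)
The strategy is to reduce the statement to Propositions \ref{prop:phicontinuous} and \ref{prop:psicontinuous}, which already give the holomorphic dependence of $\phi_f$ and $\psi_f$ on $f$. Since $\L_f=\psi_f\circ \phi_f$ and $\E_f=\phi_f\circ \psi_f$ are compositions of these maps, the only real issue is that the natural domains $\bpf$ and ${\cal U}_f=\psi_f^{-1}(\bpf)$ move with $f$, so one needs to check that they vary in an open way and that the compositions make sense holomorphically on the resulting open sets of the product space.

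First I would verify that the set
\[\mathcal{B}:=\bigl\{(f,z)~;~z\in \bpf\bigr\}\]
is open in the product of the (finite-dimensional) parameter space of admissible $f$'s with $\C$. This is standard parabolic fact: given $(f_0,z_0)\in \mathcal{B}$, there exists $m_0$ with $f_0^{\circ m_0}(z_0)\in P^\att_{f_0}$. Since the threshold $R$ defining the petal can be chosen uniformly on a neighborhood of $f_0$ in parameter space (the estimates \eqref{eq:choiceR} are open conditions in $f$), and since $(f,z)\mapsto f^{\circ m_0}(z)$ is jointly holomorphic, any $(f,z)$ sufficiently close to $(f_0,z_0)$ satisfies $f^{\circ m_0}(z)\in P^\att_f\subset \bpf$, whence $(f,z)\in \mathcal{B}$.

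Next, Proposition \ref{prop:psicontinuous} yields a jointly holomorphic map $(f,Z)\mapsto \psi_f(Z)$ on the product of parameter space with $\C$, while Proposition \ref{prop:phicontinuous} yields a jointly holomorphic map $(f,z)\mapsto \phi_f(z)$ on $\mathcal{B}$. Composing, the map
\[(f,z)\longmapsto \L_f(z)=\psi_f\bigl(\phi_f(z)\bigr)\]
is a composition of holomorphic maps on $\mathcal{B}$, hence holomorphic. For $\E_f$, observe that
\[\bigl\{(f,Z)~;~Z\in {\cal U}_f\bigr\}=\bigl\{(f,Z)~;~\bigl(f,\psi_f(Z)\bigr)\in \mathcal{B}\bigr\}\]
is the preimage of the open set $\mathcal{B}$ under the jointly holomorphic map $(f,Z)\mapsto \bigl(f,\psi_f(Z)\bigr)$, so it is open. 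On this open set, $\E_f(Z)=\phi_f\bigl(\psi_f(Z)\bigr)$ is again a composition of holomorphic maps, finishing the proof.

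The only step that is not a pure formality is the openness of $\mathcal{B}$, and this is the main (modest) obstacle; once that is in hand, the rest is bookkeeping about the joint holomorphy of compositions of the Fatou coordinates already supplied by the two previous propositions.
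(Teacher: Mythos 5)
Your proposal is correct and takes essentially the same approach as the paper, which states Proposition \ref{prop:elcontinuous} as an immediate consequence of Propositions \ref{prop:phicontinuous} and \ref{prop:psicontinuous} without spelling out a proof (the paper already asserts that $\mathcal{B}$ is open in the statement of Proposition \ref{prop:phicontinuous}). Your write-up fills in the bookkeeping — the openness of $\mathcal{B}$ and of $\bigl\{(f,Z)~;~Z\in {\cal U}_f\bigr\}$, and the joint holomorphy of the compositions — which is precisely what the paper leaves implicit.
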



Note that $\E_f$ commutes with $T_1$:
\[\E_f \circ T_1 = \phi_f\circ \psi_f \circ T_1 = \phi_f\circ f\circ \psi_f = T_1\circ \phi_f\circ \psi_f = T_1\circ \E_f.\]
So, the universal cover $\exp:\C\ni Z\longmapsto \e^{2\pi \i Z}\in \C\setminus \{0\}$ semi-conjugates $\E_f:{\cal U}_f\to \C$ to
a map
\[e_f:U_f\to \C\setminus \{0\}\with U_f:=\exp({\cal U}_f)\subset \C\setminus \{0\}.\]
The map $e_f$ has removable singularities at $0$ and $\infty$, thus it extends as a map $e_f:\widehat U_f\to \Chat$, where $\Chat:=\C\cup \{\infty\}$ is the Riemann sphere and $\widehat U_f:=U_f\cup\{0,\infty\}\subset \Chat$.
The map $e_f:\widehat U_f\to \Chat$ is called the {\em horn map} associated to $f$.
As observed by Adam Epstein in his PhD thesis \cite{e}, this horn map is a finite type analytic map (see Definition \ref{def:finite} below).

\subsection{Finite type analytic maps\label{sec:finitetype}}

Let $h:W\rightarrow X$ be an analytic map of complex 1-manifolds, possibly disconnected.
An open set $U\subseteq X$ is {\em evenly covered} by $h$ if $h_{|V}:V\rightarrow U$
is a homeomorphism for each component $V$ of $h^{-1}(U)$; we say that $x\in
X $ is a {\em regular value} for $h$ if some neighborhood $U$ of $x$ is evenly
covered, and a {\em singular value} for $h$ otherwise. Note that the set
$S(h)$ of singular values is closed. Recall that $w\in W$ is a {\em critical
point} if the derivative of $h$ at $w$ vanishes, and then $h(w)\in X$ is a
{\em critical value}. We say that $x\in X$ is an {\em asymptotic value} if $h$
approaches $x$ along some path tending to infinity relative to $W$. It
follows from elementary covering space theory that the critical values
together with the asymptotic values form a dense subset of $S(h)$. In
particular, every isolated point of $S(h)$ is a critical or asymptotic value.

\begin{definition}\label{def:finite}
An analytic map $h:W\rightarrow X$ of complex $1$-manifolds is of
{\em finite type} if
\begin{itemize}
\item $h$ is nowhere locally constant,
\item $h$ has no isolated removable singularities,
\item $X$ is a finite union of compact Riemann surfaces, and
\item $S(h)$ is finite.
\end{itemize}
\end{definition}

When $h:W\rightarrow X$ is a finite type analytic map with $W\subseteq X$, we say that $h$ is a finite type analytic map on $X$.
The reason why finite type analytic maps are relevant when studying Lavaurs maps is the following.

Let $f:\P\to \P$ be a rational map, let $\phi_f:\bpf\to \C$ be an attracting Fatou coordinate defined on the parabolic basin
of some fixed point of $f$ with multiplier $1$ and let $\psi_f:\C\to \P^1(\C)$ be a repelling Fatou parameterization associated to some fixed point of $f$ with multiplier $1$.
Define
\[\E_f=\phi_f\circ \psi_f:{\cal U}_f\to \C\with {\cal U}_f=(\psi_f)^{-1}(\bpf).\]
Finally set $\widehat U_f=\exp({\cal U}_f)\cup\{0,\infty\}$ and let $e_f:\widehat U_f\to \Chat$ be defined by
\[\exp\circ \E_f = e_f \circ \exp.\]
The following result is stated as \cite[Prop. 7.3]{bee}.

\begin{proposition}\label{prop:sing}
The map $e_f:\widehat U_f\to \Chat$ is a finite type analytic map on $\Chat$.
The singular values are:
\begin{itemize}
\item $0$ and $\infty$, which are fixed asymptotic values of $e_f$, and
\item the images by $\exp\circ \phi_f$ of the critical orbits of $f$ contained in $\bpf$, which are critical values of $e_f$.
\end{itemize}
\end{proposition}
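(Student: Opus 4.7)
The plan is to verify, one at a time, the four conditions in Definition~\ref{def:finite}: $e_f$ nowhere locally constant, no isolated removable singularities, $\widehat\C$ a union of compact Riemann surfaces, and $S(e_f)$ finite. Since $\phi_f$ and $\psi_f$ are nonconstant holomorphic and $\exp$ is a local biholomorphism off the punctures, $e_f$ is nowhere locally constant. The third condition is trivial. For the second, the only candidate isolated singularities of the analytic map $e_f\colon U_f\to\C\setminus\{0\}$ are at $0$ and $\infty$. Here the asymptotic expansion computed in the proof of Lemma~\ref{multhorn} gives $\E_f(Z)=Z+C_\pm+\o(1)$ as $\Im(Z)\to\pm\infty$, so that $e_f(z)=\e^{2\pi\i C_\pm}z\cdot(1+\o(1))$ as $z\to 0$ (resp. as $z\to\infty$). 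Hence $e_f$ extends holomorphically to $\widehat U_f$ with fixed points at $0$ and $\infty$ of nonzero multiplier $\e^{2\pi\i C_\pm}$, removing these potential singularities.

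The main work is the identification and finiteness of $S(e_f)$. For the critical values, I would apply the chain rule to $\E_f=\phi_f\circ\psi_f$: a critical point $Z_0$ of $\E_f$ satisfies either $\psi_f'(Z_0)=0$ or $\phi_f'\bigl(\psi_f(Z_0)\bigr)=0$. Using $\psi_f\circ T_1=f\circ\psi_f$ inductively shows that $\psi_f'(Z_0)=0$ forces $\psi_f(Z_0-k)$ to be a critical point $c$ of $f$ for some integer $k\geq 0$, and then $\psi_f(Z_0)=f^{\circ k}(c)\in\bpf$, whence $\E_f(Z_0)=\phi_f(c)+k$. Similarly $\phi_f\circ f=T_1\circ\phi_f$ shows that a critical point of $\phi_f$ inside $\bpf$ is a preimage of some critical point $c$ of $f$ in $\bpf$, and the resulting value of $\E_f(Z_0)$ again differs from $\phi_f(c)$ by an integer. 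Projecting by $\exp$ kills integer shifts, so every critical value of $e_f$ belongs to the finite set $\bigl\{\exp\bigl(\phi_f(c)\bigr)\,:\, c\in\mathrm{Crit}(f)\cap\bpf\bigr\}$, and conversely each such value is realized (and all its forward images as well, by the relation $e_f\circ\exp=\exp\circ\E_f$), which accounts for "critical orbits" in the statement.

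It remains to show that $0$ and $\infty$ are the only asymptotic values. They \emph{are} asymptotic values because they arise as limits of $e_f$ along the paths $Z\mapsto \exp(Z)$ with $\Im(Z)\to\pm\infty$ inside ${\cal U}_f$, which exit every compact subset of $U_f$. Conversely, any non-compact exhaustion of $\widehat U_f$ that does not accumulate at $0$ or $\infty$ must push $\psi_f(Z)$ to accumulate on $\partial\bpf\subset J_f$; but $\phi_f$ does not admit continuous limits at $J_f$ (the Julia set is the locus of non-normality), so $\E_f$ has no limit along such paths, and no additional asymptotic value is produced. This last step is the main technical obstacle and is the content of Epstein's analysis in~\cite{e}; granting it, $S(e_f)=\{0,\infty\}\cup\bigl\{\exp\bigl(\phi_f(c)\bigr)\,:\,c\in\mathrm{Crit}(f)\cap\bpf\bigr\}$ is finite, $e_f$ is a finite type analytic map on $\widehat\C$, and the stated description of its singular values holds.
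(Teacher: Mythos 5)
Note first that the paper itself offers no proof of Proposition~\ref{prop:sing}: it is quoted from \cite[Prop.~7.3]{bee} with no argument given, so there is no in-paper proof to compare against. Within your proposal, the verification of the first three conditions of Definition~\ref{def:finite} and the identification of the critical values via the chain rule and the functional equations $\psi_f\circ T_1=f\circ\psi_f$, $\phi_f\circ f=T_1\circ\phi_f$ are sound (with the usual caveat that $\bpf$ is completely invariant, so $f^{\circ k}(c)\in\bpf$ indeed forces $c\in\bpf$).

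There is, however, a genuine gap in your claim that $0$ and $\infty$ are asymptotic values of $e_f:\widehat U_f\to\Chat$. The paths $t\mapsto\exp\bigl(Z(t)\bigr)$ with $\Im\bigl(Z(t)\bigr)\to\pm\infty$ converge to $0$ (resp.\ $\infty$), which are \emph{interior} points of $\widehat U_f$ after the extension. Such a path does not leave every compact subset of $\widehat U_f$, so by the definition given just before Definition~\ref{def:finite} it cannot exhibit an asymptotic value of the extended map $e_f:\widehat U_f\to\Chat$; all it shows is removability at $0$ and $\infty$, which you already used. That $0$ and $\infty$ nonetheless \emph{are} asymptotic values requires a different argument, e.g.\ showing they are singular values (for a small disk $U\ni 0$ the preimage $e_f^{-1}(U)$ is disconnected while $e_f^{-1}(0)=\{0\}$, so $e_f\rest{e_f^{-1}(U)}$ cannot be a covering onto $U$), and then noting that an isolated singular value which is not critical must be asymptotic. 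You correctly flag that the converse direction (finiteness of $S(e_f)$, i.e.\ that no further asymptotic values appear on $\partial\widehat U_f$) is the real technical content and must be taken from Epstein's analysis --- which is exactly what the paper does by citing \cite{bee} for the whole proposition.
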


\end{document}